\documentclass[reqno]{amsart}

\usepackage{appendix}
\usepackage[utf8]{inputenc}
\usepackage[T1]{fontenc}
\usepackage{lmodern}
\usepackage{color}
\usepackage{listings}
\usepackage{hyperref}
\usepackage{amsmath,bm}
\usepackage{amsthm,amssymb}
\usepackage{amsfonts,mathrsfs}
\usepackage{epstopdf}
\usepackage[table]{xcolor}
\usepackage{enumitem}
\usepackage[numbers,sort&compress]{natbib}
\usepackage{graphicx,pstricks,listings,subfigure,caption,tikz}
\usepackage{soul}
\usepackage{cleveref}
\usepackage{appendix}
\usepackage {setspace,caption}

\usepackage{geometry}
\allowdisplaybreaks[4]

\graphicspath{ {./my_report_images/} }

\sethlcolor{orange}

\numberwithin{equation}{section}
\newtheorem{corollary}{Corollary}[section]
\newtheorem{definition}{Definition}[section]
\newtheorem{lemma}{Lemma}[section]
\newtheorem{proposition}{Proposition}[section]
\newtheorem{remark}{Remark}[section]

\newtheorem{theorem}{Theorem}[section]

\begin{document}
\title[Well--Posedness of Periodic Strong Transonic Shocks]{On the Well--Posedness of Periodic Strong Transonic Shocks in Divergent Nozzles}

\author{Peng Qu}
\address[Peng Qu]{School of Mathematical Sciences, Fudan University, Shanghai, China/Shanghai Key Laboratory of Contemporary Applied Mathematics, Shanghai, China}
\email{pqu@fudan.edu.cn}

\author{Jiahui Wang}
\address[Jiahui Wang]{School of Mathematical Sciences, Fudan University, Shanghai, China}
\email{23110180037@m.fudan.edu.cn, corresponding author.}

\author{Huimin Yu}
\address[Huimin Yu]{Department of mathematics, Shandong Normal University, Jinan, China}
\email{hmyu@sdnu.edu.cn}

\author{Xiaomin Zhang}
\address[Xiaomin Zhang]{Department of mathematics, Shandong Normal University, Jinan, China}
\email{zxm15924687@163.com}

\date{}

\keywords{Full compressible Euler equations, Quasi--one--dimensional flows, Global existence, Dynamical stability, Transonic shocks, Temporal periodic solution}
\subjclass[2010]{35B10, 35Q31, 76H05}

\begin{abstract}
	This paper studies the $C^{1}$ existence and dynamical stability of temporal periodic solutions involving strong transonic shocks for the quasi--one--dimensional full compressible Euler equations in diverging nozzles. We reveal the physical structural dissipation mechanism of strong transonic shocks hidden in the Rankine--Hugoniot conditions. This allows us to estimate the resonance at the shock boundary caused by the left acoustic waves in the subsonic downstream region. With this dissipation, we complete the proof by developing a fraction--step linearized iterative method.
\end{abstract}
\maketitle

\section{Introduction}
\subsection{Setting of the problem}
We would like to study the transonic shock solutions for the following quasi--one--dimensional full compressible Euler equations:
\begin{equation}\label{a1}
	\left\{\begin{aligned}
		&\rho_{t}+(\rho u)_{x}=-\frac{a^{\prime}(x)}{a(x)}\rho u,\\
		&(\rho u)_{t}+(\rho u^{2}+p)_{x}=-\frac{a^{\prime}(x)}{a(x)}\rho u^{2},\\
		&(\rho E)_{t}+(\rho Eu+pu)_{x}=-\frac{a^{\prime}(x)}{a(x)}(\rho Eu+pu),
	\end{aligned}\right.
\end{equation}
where $\rho, u, p$ are the unknowns on $(t,x) \in \mathbb{R}_{+}\times[0,L]$, denoting the density, velocity and pressure of the gas respectively. The system \eqref{a1} describes flows of non--isentropic gas in a variable cross--section nozzle, where the function $a(x)$ stands for the cross section area at $x$. There is a wealth of literature on the quasi--one--dimensional compressible Euler equations. One can refer to \cite{LIu2, jiahui, XinZ, CGQ2024} and the references therein.
\par For simplicity, we consider ideal polytropic gases, for which the total energy $E$, the sonic speed $c$ and the Mach number $M$ are represented as
\begin{eqnarray*}
	E=\frac{u^{2}}{2}+\frac{p}{(\gamma-1)\rho}, \quad c=\sqrt{\frac{\gamma p}{\rho}}, \quad M=\frac{u}{c}=\sqrt{\frac{\rho u^{2}}{\gamma p}}.
\end{eqnarray*}
with $\gamma \in (1,3)$. We assume that the nozzles are divergent, i.e.,
\begin{align}
	0<m_{a}\le \frac{a^{\prime}(x)}{a(x)} \le  M_{a} < +\infty,\label{a2}
\end{align}
and the initial--boundary data for equations \eqref{a1} are
\begin{align}
	(\rho,u,p)(t,x)\big|_{t=0}&=(\rho_{0},u_{0},p_{0})(x),\label{a3}\\
	(\rho,u,p)(t,x)\big|_{x=0}&=(\rho_{b-},u_{b-},p_{b-})(t), \quad p(t,x)\big|_{x=L}=p_{b+}(t).\label{a5}
\end{align}
In engineering, the system is an aerodynamic model of aircraft nozzles. 
\par A transonic shock solution for problem \eqref{a1}, \eqref{a3}-\eqref{a5} is defined as follows.
\begin{definition}\label{D1}
	A piecewise $C^1$ smooth solution to~\eqref{a1} is said to be a {\rm{transonic shock solution}} if it is separated by a shock wave positioned at $x=\chi(t)$, mediating a transition from supersonic upstream flow (left of the shock) to subsonic downstream flow (right of the shock), i.e., is of the form
	\begin{align*}
		(\rho,u,p)(t,x)=
		\left\{\begin{aligned}
			&(\rho_{-},u_{-},p_{-})(t,x),\quad 0\leq x<\chi(t),\\
			&(\rho_{+},u_{+},p_{+})(t,x),\quad \chi(t)<x\leq L,
		\end{aligned}\right.
	\end{align*}
	satisfying the \textsl{Rankine--Hugoniot} conditions
	\begin{align}
		&\big[\rho u\big]=\chi^{\prime}(t)\big[\rho\big], \label{hr1}\\
		&\big[\rho u^{2} + p\big]=\chi^{\prime}(t)\big[\rho u\big], \label{hr2}\\
		&\big[\frac{1}{2}\rho u^{3} + \frac{\gamma}{\gamma -1}p u\big]=\chi^{\prime}(t)\big[\frac{1}{2}\rho u^{2} + \frac{1}{\gamma -1}p\big], \label{hr3}
	\end{align}
	where $[f] := f(t,\chi(t)+) - f(t,\chi(t)-)$ denotes the jump across the shock, and the Lax geometric entropy condition
	\begin{align}
		\left(u - \sqrt{\frac{\gamma p}{\rho}}\right)(t,\chi(t)-) &> \chi^{\prime}(t) > \left(u - \sqrt{\frac{\gamma p}{\rho}}\right)(t,\chi(t)+), \label{Lax1}\\
		\left(u + \sqrt{\frac{\gamma p}{\rho}}\right)(t,\chi(t)+) &> \chi^{\prime}(t). \label{Lax2}
	\end{align}
\end{definition}
\par Early research on transonic shocks can be traced back to the pioneering work of Courant and Friedrichs \cite{Courant} on De Laval nozzles in the 1940s. Their analysis reveals that shock formation at specific divergent nozzle sections requires two critical conditions: sufficient outlet pressure and sustained supersonic inflow. Subsequently, Liu $\emph{et al.}$~\cite{Liu1, Lic, Glaz} used the Lax--Glimm scheme for hyperbolic conservation laws to analyze shock stability. The significant progress in multidimensional analyses comes from Xin and Yin~\cite{Xin}, where they establish the global existence and stability for symmetric transonic shocks in two-- and three--dimensional nozzles under a controlled outlet pressure while demonstrating structural instability of shocks in converging nozzles. Rauch $\emph{et al.}$~\cite{Rauch} later weakened stability criteria for quasi--one--dimensional flows by developing exponential decay estimates in linearized systems, relaxing prior constraints on nozzle geometry and shock magnitude. We also cite related works on transonic shock solutions of Euler equations \cite{Chen,Duan,Liao,Yin,YuanH,FangB,XinZ}.
\par For steady solutions, the system \eqref{a1},\eqref{a3}-\eqref{a5} becomes 
\begin{align}\label{a6}
	\left\{\begin{aligned}
		&\frac{d}{dx}\left(\tilde{\rho}\tilde{u}\right)=-\frac{a^{\prime}(x)}{a(x)}\tilde{\rho}\tilde{u},\\
		&\frac{d}{dx}\left(\tilde{\rho}\tilde{u}^{2}+\tilde{p}\right)=-\frac{a^{\prime}(x)}{a(x)}\tilde{\rho}\tilde{u}^{2},\\
		&\frac{d}{dx}\left(\frac{1}{2}\tilde{\rho}\tilde{u}^{3}+\frac{\gamma}{\gamma-1}\tilde{p}\tilde{u}\right)
		=-\frac{a^{\prime}(x)}{a(x)}\left(\frac{1}{2}\tilde{\rho}\tilde{u}^{3}+\frac{\gamma}{\gamma-1}\tilde{p}\tilde{u}\right),
	\end{aligned}\right.
\end{align}
with the boundary data
\begin{align}
	(\tilde{\rho},\tilde{u},\tilde{p})(x)\big|_{x=0}=(\tilde{\rho}_{-}(0),\tilde{u}_{-}(0),\tilde{p}_{-}(0)),\quad \tilde{p}(x)\big|_{x=L}=\tilde{p}_{+}(L).\label{a7}
\end{align}
It has been analyzed in \cite{Liu1, Embid} that if the inlet state $(\tilde{\rho}_{-}(0),\tilde{u}_{-}(0),\tilde{p}_{-}(0))$ and outlet pressure $\tilde{p}_{+}(L)$ satisfy compatibility criteria
\begin{align}\label{compatibCri}
	\tilde{u}_{-}(0) > \tilde{c}_{-}(0) \mathop{=}\limits^{\triangle}\sqrt{\frac{\gamma \tilde{p}}{\tilde{\rho}}}(0), \quad \tilde{p}_{+}(L) \in I_{p} = \left(p_{\min}, p_{\max}\right),
\end{align}
where the interval $I_{p}$ depends on the inlet state $(\tilde{\rho}_{-}(0),\tilde{u}_{-}(0),\tilde{p}_{-}(0))$ and the geometric shape of the nozzle $a(x), L$, then there exits a unique steady transonic shock solution $(\tilde{\rho},\tilde{u},\tilde{p})(x)$ such that a shock at $x=\tilde{x}$ separates the nozzle into a supersonic upstream solution $(\tilde{\rho}_{-},\tilde{u}_{-},\tilde{p}_{-})(x)$ over $[0,\tilde{x})$ and a subsonic downstream solution $(\tilde{\rho}_{+},\tilde{u}_{+},\tilde{p}_{+})(x)$ over $(\tilde{x},L]$. 
\par Similar to~\cite{Rauch}, for some $\delta>0$, the supersonic branch admits prolongation to $[0,\tilde{x}+\delta]$ via local extension of~\eqref{a6} and \eqref{a7}, while the subsonic branch similarly extends to $[\tilde{x}-\delta,L]$, preserving original solution characteristics within their respective domains. Throughout this paper, the notation $(\tilde{\rho}_{\pm},\tilde{u}_{\pm},\tilde{p}_{\pm})(x)$ represents these extended solutions.
\par The dynamic behavior of transonic shocks is highly sensitive to perturbations from both upstream and downstream flows. Bruce and Babinsky \cite{Bruce} conducted an experimental study in 2008, adjusting the downstream flow to control the periodic movement of the shocks in a straight nozzle. However, a mathematical framework for the temporal evolution of transonic shocks under boundary perturbations is still lacking. In this paper, we conduct an analysis of the time--varying shock dynamics. Specifically, we consider the periodic boundary data of the form 
\begin{align}
\rho_{b-}(t)=\tilde{\rho}_{-}(0)+\bar{\rho}_{b-}(t),\quad &u_{b-}(t)=\tilde{u}_{-}(0)+\bar{u}_{b-}(t),\quad p_{b-}(t)=\tilde{p}_{-}(0)+\bar{p}_{b-}(t),\label{a8}\\
&p_{b+}(t)=\tilde{p}_{+}(L)+\bar{p}_{b+}(t),\label{a9}
\end{align}
which is a $C^{1}$ perturbation of \eqref{a7}
\begin{align}
	\|\bar{\rho}_{b-}(t)\|_{C^{1}(\mathbb{R}_{+})}+\|\bar{u}_{b-}(t)\|_{C^{1}(\mathbb{R}_{+})}+\|\bar{p}_{b-}(t)\|_{C^{1}(\mathbb{R}_{+})}+\|\bar{p}_{b+}(t)\|_{C^{1}(\mathbb{R}_{+})}\leq\epsilon, \label{a13}
\end{align}
for some small constant $\epsilon>0$. We further assume the boundary states have same constant temporal period $\mathcal{T}>0$, i.e.,
\begin{align}
\bar{\rho}_{b-}(t+\mathcal{T})=\bar{\rho}_{b-}(t),~~\bar{u}_{b-}(t+\mathcal{T})=\bar{u}_{b-}(t),
~~\bar{p}_{b-}(t+\mathcal{T})=\bar{p}_{b-}(t),~~ \bar{p}_{b+}(t+\mathcal{T})=\bar{p}_{b+}(t).\label{a10}
\end{align}
\par Periodic smooth flows driven by periodic boundary data have drawn a lot of attentions. Matsumura and Nishida \cite{Matsumura} investigated the existence of time--periodic solutions to a viscous gas equation driven by the piston periodic boundary. Luo \cite{Luo} extended this result to polytropic gases under some restrictive conditions on the piston velocity. More recently, Yuan \cite{Yuan} demonstrated the existence of supersonic time--periodic solutions to isentropic flows, and Qu~\cite{Qu} studied the existence and stability of hyperbolic systems with dissipative periodic boundaries. For extended discussions, one can refer to \cite{Yuw, QUPENG, Yuh, Zhang2, Qup1, Zhangx}.
\subsection{Main results} 
In this paper, we focus on the existence and stability of time--periodic transonic shock solutions to \eqref{a1}. We denote by $\tilde{B}$ the Bernoulli constant of the background flow $(\tilde{\rho},\tilde{u},\tilde{p})(x)$, defined as
\begin{eqnarray*}
	\tilde{B} := \dfrac{1}{2}\tilde{u}_{-}^{2}(0) + \dfrac{\gamma}{\gamma -1}\dfrac{\tilde{p}_{-}(0)}{\tilde{\rho}_{-}(0)},
\end{eqnarray*}
and denote by $\tilde{M}_{-}(0)$ the inlet Mach number, and then further assume that the inlet data \eqref{a7} satisfy
\begin{align}
&\frac{a^{\prime}(x)}{a(x)}\sqrt{\left(\gamma-1\right)\tilde{B}}<\mathcal{E} ,\label{a11}\\
&1<\tilde{M}_{-}(0)<M_{crit},\quad \text{if $\gamma \le \gamma_{c}$},\label{AA11}
\end{align}
for some constants $0<\mathcal{E}\ll 1$, $M_{crit} >1$. And $\gamma_{c} \in (1,\frac{7}{5})$ would be determined later in \Cref{lemmaDissipa}.
\par The main theorems of this paper are presented as follows. First, we will prove the the existence of temporal periodic transonic shock solutions.
\begin{theorem}[Existence of time-periodic transonic shock solutions]\label{t1}
With boundary data $\rho_{b-}, u_{b-}, p_{b-}$ and $p_{b+}$ as \eqref{a8}--\eqref{a9}, under the assumptions \eqref{compatibCri} and \eqref{a11}--\eqref{AA11}, there exist a small constant $\epsilon_1>0$ and a constant $C_{E} > 0$ such that
for any given $\epsilon\in(0,\epsilon_1)$, any $\mathcal{T}>0$, and any given boundary perturbations $\bar{\rho}_{b-},\bar{u}_{b-},\bar{p}_{b-},\bar{p}_{b+}\in C^1_{t}$ satisfying \eqref{a13}--\eqref{a10}, there exists an initial data $(\rho^{(\mathcal{T})}_{0},u^{(\mathcal{T})}_{0},p^{(\mathcal{T})}_{0})(x)$ of the form 
\begin{align}\label{a14}
	(\rho^{(\mathcal{T})}_{0},u^{(\mathcal{T})}_{0},p^{(\mathcal{T})}_{0})(x)=
	\left\{\begin{aligned}
		&(\rho_{0-}^{(\mathcal{T})},u_{0-}^{(\mathcal{T})},p_{0-}^{(\mathcal{T})})(x),\quad 0\leq x<\chi^{(\mathcal{T})}(0),\\
		&(\rho_{0+}^{(\mathcal{T})},u_{0+}^{(\mathcal{T})},p_{0+}^{(\mathcal{T})})(x),\quad \chi^{(\mathcal{T})}(0)<x\leq L,
	\end{aligned}\right.
\end{align}
satisfying
\begin{multline}
	|\chi^{(\mathcal{T})}(0)-\tilde{x}|+\|(\rho_{0-}^{(\mathcal{T})},u_{0-}^{(\mathcal{T})},p_{0-}^{(\mathcal{T})})
	-(\tilde{\rho}_{-},\tilde{u}_{-},\tilde{p}_{-})\|_{C^{1}([0,\chi^{(\mathcal{T})}(0)])}\\
	+\|(\rho_{0+}^{(\mathcal{T})},u_{0+}^{(\mathcal{T})},p_{0+}^{(\mathcal{T})})-(\tilde{\rho}_{+},\tilde{u}_{+},
	\tilde{p}_{+})\|_{C^{1}([\chi^{(\mathcal{T})}(0),L])}\leq C_{E}\epsilon,\label{a15}
\end{multline}
such that the initial--boundary value problem~\eqref{a1},\eqref{a3}--\eqref{a5} admits a transonic shock solution $(\rho^{(\mathcal{T})},u^{(\mathcal{T})},p^{(\mathcal{T})})(t,x)$ on $(t, x)\in \mathbb{R}_{+}\times[0,L]$ containing a shock $x=\chi^{(\mathcal{T})}(t) \in (0,L)$ with temporal periodicity
\begin{align}\label{temPeriod}
	(\rho^{(\mathcal{T})},u^{(\mathcal{T})},p^{(\mathcal{T})})(t+\mathcal{T},x) = (\rho^{(\mathcal{T})},u^{(\mathcal{T})},p^{(\mathcal{T})})(t,x), \quad \chi^{(\mathcal{T})}(t+\mathcal{T})=\chi^{(\mathcal{T})}(t).
\end{align}
\par Moreover, if the supersonic and subsonic regions are denoted by 
\begin{equation}\label{regions}
	\Omega_{-}^{\mathcal{T}} = \left\{(t,x):t\in \mathbb{R}_{+}, 0\le x \le\chi^{(\mathcal{T})}(t) \right\}, \quad \Omega_{+}^{\mathcal{T}} = \left\{(t,x):t\in \mathbb{R}_{+}, \chi^{(\mathcal{T})}(t)\le x \le L \right\},
\end{equation}
the transonic shock solution satisfies
\begin{multline}
\sum_{m=0}^{1}|\partial_{t}^{m}(\chi^{(\mathcal{T})}(t)-\tilde{x})|+\|(\rho_{-}^{(\mathcal{T})},u_{-}^{(\mathcal{T})},
p_{-}^{(\mathcal{T})})
-(\tilde{\rho}_{-},\tilde{u}_{-},\tilde{p}_{-})\|_{C^{1}(\Omega_{-}^{\mathcal{T}})}\\
+\|(\rho_{+}^{(\mathcal{T})},u_{+}^{(\mathcal{T})},p_{+}^{(\mathcal{T})})-(\tilde{\rho}_{+},\tilde{u}_{+},
\tilde{p}_{+})\|_{C^{1}(\Omega_{+}^{\mathcal{T}})}<C_{E}\epsilon,\label{a16}
\end{multline}
where $(\rho_{\pm}^{(\mathcal{T})},u_{\pm}^{(\mathcal{T})},p_{\pm}^{(\mathcal{T})})$ are the solution in associated regions. 
\end{theorem}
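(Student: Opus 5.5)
We will construct the periodic solution directly on the time strip rather than as the limit of an initial value problem. Periodicity is built into the function space, and the initial data \eqref{a14} is then read off at $t=0$.

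\textbf{Step 1: fixing the shock domain.} We straighten the shock by the change of variables $y=(x-\chi(t))\frac{L-\tilde{x}}{L-\tilde{x}}$. More precisely, on the subsonic side we map $[\chi(t),L]$ affinely onto $[\tilde{x},L]$. On the supersonic side we map $[0,\chi(t)]$ onto $[0,\tilde{x}]$, or we simply work on $[0,\tilde{x}+\delta]$ using the extended background flow. We then write the perturbations in Riemann-invariant (characteristic) variables around $(\tilde{\rho}_{\pm},\tilde{u}_{\pm},\tilde{p}_{\pm})$. In the supersonic region all three characteristics enter from $x=0$. The upstream problem is therefore an initial-boundary problem with every boundary condition at the inlet. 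Its $\mathcal{T}$-periodic solution is obtained by tracing characteristics backward to $x=0$ within a finite time, since the speeds are uniformly positive. This yields a unique periodic $C^1$ upstream state with norm $\le C\epsilon$, independent of $\mathcal{T}$.

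\textbf{Step 2: the downstream linearized periodic problem.} Downstream, two characteristic families (the entropy wave and the right acoustic wave $u+c$) enter from the shock. The left acoustic wave $u-c$ enters from $x=L$ and is determined there by the pressure condition \eqref{a9}. At the shock, the Rankine--Hugoniot conditions \eqref{hr1}--\eqref{hr3} give three relations among four quantities: the two outgoing downstream variables, the shock speed $\chi'$, and (through the background) the shock position $\chi$. They express these in terms of the incoming $u-c$ wave from downstream and the known upstream trace. Linearizing, we obtain a boundary condition of the form
\[ w_{\text{out}}=\alpha\, w_{\text{in}}+\beta(\chi-\tilde{x})+\text{data},\qquad \chi'=\kappa_1 w_{\text{in}}+\kappa_2(\chi-\tilde{x})+\text{data}. \]
The divergent nozzle assumption \eqref{a2} makes the coefficient $\kappa_2$ of $(\chi-\tilde{x})$ in the ODE for $\chi$ strictly negative. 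This is the shock-position damping observed in \cite{Rauch}. The main obstacle is the resonance caused by left acoustic waves reflecting at $x=L$ (pressure condition) and again at the shock. We must show that the round-trip reflection gain has modulus $<1$ after accounting for the $\chi$-equation. I would first compute the reflection coefficient $\alpha$ explicitly from the Rankine--Hugoniot conditions at the strong shock. Here the hypotheses \eqref{a11}--\eqref{AA11} (and the threshold $\gamma_c$, presumably from \Cref{lemmaDissipa}) are what give $|\alpha\cdot r_L|<1$ together with smallness of the source terms of size $\mathcal{E}$. Using this "structural dissipation", we will prove an a priori $C^0$ and then $C^1$ estimate for periodic solutions of the linear problem that is uniform in $\mathcal{T}$. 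To do so we trace characteristics backward over many round trips and sum a geometric series; the contraction of the round trip handles arbitrary period lengths. Existence of a periodic solution of the linear problem then follows: either by solving the Cauchy problem over a long time and extracting a limit of $\mathcal{T}$-translates, or directly by a fixed point on periodic functions, which is a contraction thanks to the gain $<1$. The $C^1$ estimates are obtained by differentiating in $t$ along characteristics, and the same reflection structure applies to $\partial_t$ of the unknowns.

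\textbf{Step 3: nonlinear iteration.} We set up a fraction-step iteration. Given a periodic iterate $(U^{(n)},\chi^{(n)})$ in the ball of radius $C_E\epsilon$ in $C^1$, we freeze the coefficients and the domain, and solve the linear periodic problem of Step 2 to get $(U^{(n+1)},\chi^{(n+1)})$. The uniform estimates show the ball is mapped into itself for $\epsilon<\epsilon_1$. The map is contractive in the $C^0$ norm (losing no derivatives because the system is hyperbolic and the estimate is along characteristics). Since the $C^1$ ball is closed in $C^0$ by Arzelà--Ascoli and equicontinuity of derivatives (which we obtain via a modulus-of-continuity estimate carried through the iteration), the limit is a periodic $C^1$ transonic shock solution satisfying \eqref{a16}. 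The Lax conditions \eqref{Lax1}--\eqref{Lax2} hold by smallness, since they hold strictly for the background. Finally, the initial data \eqref{a14} is defined as the restriction at $t=0$, and \eqref{a15} follows from \eqref{a16}.
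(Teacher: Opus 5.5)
Your overall architecture matches the paper's. That means: a periodic upstream state via backward characteristics, a linearized iteration downstream coupled to a damped ODE for the shock, uniform $C^1$ bounds, a $C^0$ contraction, a modulus of continuity with Arzel\`a--Ascoli, and initial data read off at $t=0$. Straightening the shock, or keeping $\chi$ inside the linear step rather than using the paper's fractional step, are inessential variations.

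The gap is in Step 2, at exactly the point the paper treats as its main contribution. You place the needed dissipation in $|\alpha\, r_L|<1$ and treat the shock-position feedback as a source term of size $\mathcal{E}$. In the paper's variables $r_L=-1$ by \eqref{outletBoundary}. By \eqref{reflecPart}, the pure reflection coefficient is $|X-Y|/(X+Y)$ with $X=(\frac{\gamma+1}{2}\tilde\rho_l+\frac{3-\gamma}{2}\tilde\rho_r)\sqrt{\frac{\gamma+1}{2}\tilde\rho_r-\frac{\gamma-1}{2}\tilde\rho_l}>0$ and $Y=2\tilde\rho_r\sqrt{\tilde\rho_l}>0$. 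So $|\alpha r_L|<1$ holds for every $\gamma\in(1,3)$ and every shock, without \eqref{AA11} or $\gamma_c$. If your bookkeeping were right, the Mach-number restriction would be superfluous. It is not, because the term you call small is not small.

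Both $\kappa_2=\partial_x\mathcal{F}=-\frac{a'}{a}\frac{u_l}{2}$ from \eqref{pFpx} and $\beta=\partial_x\mathscr{A}_3$ from \eqref{pA3px} are proportional to $a'/a$, hence of size $\mathcal{E}$.
\begin{itemize}
\item The periodic shock displacement is therefore of size $|\kappa_2|^{-1}\|w_{\mathrm{in}}\|\sim\epsilon\mathcal{E}^{-1}$; compare \eqref{appShockesti}. For slowly varying data one has $\chi-\tilde x\approx-\kappa_1 w_{\mathrm{in}}/\kappa_2$.
\item Consequently $\beta(\chi-\tilde x)$ feeds an $\mathcal{E}$-independent, order-one multiple of the incoming wave back into the outgoing 3-wave.
\item The inequality that must be proved is the combined one \eqref{dissipationCon}: $|\beta|\,|\kappa_2|^{-1}\bigl(|\partial_{\bar\Upsilon_1}\mathcal{F}|+|\partial_{\bar\Upsilon_3}\mathcal{F}|\bigr)+|\alpha|<1$.
\item Proving it requires computing the ratio $\partial_x\mathscr{A}_3/\partial_x\mathcal{F}$, which is free of $a'/a$, from the R--H identities \eqref{rhFormu1}--\eqref{rhFormu5} and the background ODE \eqref{b2}.
\item The inequality reduces to $P(M_r)<0$, and this is where $\gamma_c$ and \eqref{AA11} enter (\Cref{lemmaDissipa}).
\end{itemize}

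Relatedly, the contraction cannot be seen round trip by round trip. $\chi$ integrates the incoming wave over the slow time scale $\mathcal{E}^{-1}$, that is, over many round trips. The paper instead closes both the $C^0$ bound and the $C^0$ contraction globally in sup norm. It combines the periodic-ODE bound \eqref{bb19} with the shock boundary relation, and uses the rescaling \eqref{beta} to absorb the entropy-wave coupling.

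For the derivative estimates your claim is essentially correct. There the shock enters through $\chi'=O(\epsilon)$, so $\beta\chi'=O(\mathcal{E}\epsilon)$ and only $|\alpha|$ matters, as in \eqref{c42}--\eqref{c43}. But without identifying the resonance term and verifying \eqref{dissipationCon}, neither your a priori $C^0$ bound for the linear periodic problem nor the contraction in Step 3 closes.
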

Second, we would like to investigate the dynamic stability of time--periodic transonic shock solutions under small initial perturbations.
\begin{theorem}[Stability of the time-periodic transonic shock solution]\label{t2}
With boundary data $\rho_{b-}, u_{b-}, p_{b-}$ and $p_{b+}$ as \eqref{a8}--\eqref{a9}, under the assumptions \eqref{compatibCri} and \eqref{a11}--\eqref{AA11}, there exists a constant $\epsilon_2\in(0,\epsilon_1)$ such that for any given $\epsilon\in(0,\epsilon_2)$, any $\mathcal{T}>0$, any given boundary perturbations  $\bar{\rho}_{b-},\bar{u}_{b-},\bar{p}_{b-},\bar{p}_{b+}\in C^1$ satisfying \eqref{a13}--\eqref{a10}, and any given initial data
\begin{align}\label{a19}
(\rho_{0},u_{0},p_{0})(x)=
\left\{\begin{aligned}
&(\rho_{0-},u_{0-},p_{0-})(x),\quad 0\leq x<x_{0},\\
&(\rho_{0+},u_{0+},p_{0+})(x),\quad x_{0}<x\leq L,
\end{aligned}\right.
\end{align}
with
\begin{multline}
|x_{0}-\tilde{x}|+\|(\rho_{0-},u_{0-},p_{0-})-(\tilde{\rho}_{-},\tilde{u}_{-},\tilde{p}_{-})\|_{C^{1}([0,x_0])}\\
+\|(\rho_{0+},u_{0+},p_{0+})-(\tilde{\rho}_{+},\tilde{u}_{+},\tilde{p}_{+})\|_{C^{1}([x_0,L])}\leq\epsilon,\label{a20}
\end{multline}
the initial--boundary value problem \eqref{a1},\eqref{a3}--\eqref{a5} admits a transonic shock solution, which includes the shock wave $x=\chi(t)$ with $\chi(0)=x_0,~ \chi(t)\in(0,L)$. 
\par Moreover, this transonic shock solution exponentially converges to the time--periodic solution driving by the same boundary condition, i.e., there exist constants $C_S>0$, $\zeta\in(0,1)$ and $\mathcal{T}_{0}>0$, such that for all $t>0$,
\begin{align}
&\|(\rho_{-},u_{-},p_{-})(t,\cdot)-(\rho_{-}^{(\mathcal{T})},u_{-}^{(\mathcal{T})},p_{-}^{(\mathcal{T})})(t,\cdot)\|
_{C^{0}([0,\min\{\chi(t),\chi^{(\mathcal{T})}(t)\}))}\notag\\
&+\|(\rho_{+},u_{+},p_{+})(t,\cdot)-(\rho_{+}^{(\mathcal{T})},u_{+}^{(\mathcal{T})},p_{+}^{(\mathcal{T})})(t,\cdot)\|
_{C^{0}((\max\{\chi(t),\chi^{(\mathcal{T})}(t)\},L])}\notag\\
&+\Big|\chi(t)-\chi^{(\mathcal{T})}(t)\Big|+\Big|\partial_{t}\chi(t)-\partial_{t}\chi^{(\mathcal{T})}(t)\Big|\leq C_{S}\epsilon\zeta^{\lfloor\frac{t}{\mathcal{T}_{0}}\rfloor},\label{a21}
\end{align}
where $(\rho_{\pm},u_{\pm},p_{\pm})$ and $(\rho_{\pm}^{(\mathcal{T})},u_{\pm}^{(\mathcal{T})},p_{\pm}^{(\mathcal{T})})$ are the solutions in associated regions with initial data \eqref{a19}--\eqref{a20} and \eqref{a14}--\eqref{a15}, respectively.
\end{theorem}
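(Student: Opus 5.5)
Theorem \ref{t2} splits into two parts: global existence of a transonic shock solution for the perturbed initial data, and exponential convergence toward the periodic solution $(\rho^{(\mathcal{T})},u^{(\mathcal{T})},p^{(\mathcal{T})},\chi^{(\mathcal{T})})$ of Theorem \ref{t1}.

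For \emph{global existence} I would take the same approach as for Theorem \ref{t1}.
\begin{enumerate}
\item In the supersonic region $\Omega_-$ all characteristics enter from $x=0$. The upstream state there is determined by the inlet data and the initial data alone, through the classical semi-global $C^1$ theory for quasilinear hyperbolic systems. Because \eqref{a2} makes the nozzle divergent, the extended supersonic branch stays uniformly supersonic on $[0,\tilde x+\delta]$, so the upstream solution exists on all of $\mathbb{R}_+\times[0,\tilde x+\delta]$ with $C^1$ deviation $O(\epsilon)$. Its dependence on the initial data dies out after the finite time $\mathcal{T}_1=\max (\tilde x+\delta)/\lambda_{\min}$, after which it coincides with the periodic upstream state.
\item In the subsonic region I would straighten the shock by the change of variables $y=(x-\chi(t))(L-\tilde x)/(L-\chi(t))$ and diagonalize using the Riemann invariants $w_1,w_2,w_3$ (one left acoustic wave, the entropy wave, one right acoustic wave).
\item The Rankine--Hugoniot conditions \eqref{hr1}--\eqref{hr3} are then solved for $\chi'(t)$ and for the two outgoing (rightward) invariants in terms of the incoming left acoustic invariant and the upstream state. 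At the outlet, $p=p_{b+}$ gives the reflected left-going invariant in terms of the incoming right-going one.
\item Uniform a priori $C^1$ bounds of size $C\epsilon$ for all time then follow from the fraction-step linearized iteration. The key input is the dissipation from Lemma \ref{lemmaDissipa}: the product of the reflection coefficients at the shock and at the outlet, composed with the damping along characteristics from the source terms under \eqref{a11}--\eqref{AA11}, has modulus strictly below $1$. This gives a contraction over each round trip of acoustic waves.
\end{enumerate}

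For \emph{convergence}, I would subtract the periodic solution from the solution just constructed, working in the straightened coordinates so that both are defined on a common domain. The difference $W=(w-w^{(\mathcal{T})},\chi-\chi^{(\mathcal{T})})$ satisfies a linear hyperbolic system with coefficients close to the background, obtained by the mean value theorem. Its boundary conditions are the linearized Rankine--Hugoniot relations at $y$ corresponding to the shock, homogeneous outlet conditions, and zero upstream difference after time $\mathcal{T}_1$.

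I would estimate only the $C^0$ norm of $W$ together with $\chi-\chi^{(\mathcal{T})}$. The reason is that the $C^0$ difference estimate needs only $C^1$ bounds on the two solutions, which are already available, so no loss of derivatives occurs. For $\chi'-\chi^{(\mathcal{T})\prime}$ I would use the $C^0$ identity expressing $\chi'$ through the Rankine--Hugoniot conditions in terms of the traces.

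The estimate itself would proceed in these steps:
\begin{enumerate}
\item Trace a characteristic backward. Each left acoustic wave reaching the shock came from the outlet, where it was the reflection of a right wave, which in turn was emitted at the shock.
\item Take $\mathcal{T}_0$ to be larger than $\mathcal{T}_1$ plus the maximal travel time of such a round trip, including the entropy-wave transit time. Over one interval of length $\mathcal{T}_0$ this gives
\[
\|W(t+\mathcal{T}_0)\|\le \big(\kappa + C\epsilon\big)\|W(t)\|,
\]
with $\kappa<1$ furnished by Lemma \ref{lemmaDissipa}.
\item The shock-position component needs separate treatment. The term $\chi-\chi^{(\mathcal{T})}$ obeys an ODE whose linearization, via the $a'/a>0$ term in the Rankine--Hugoniot relations, has a damping coefficient of sign $-c_0<0$ for divergent nozzles. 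Integrating this ODE over the interval gives the same kind of contraction for the position component.
\end{enumerate}
Choosing $\epsilon_2$ so small that $\zeta:=\kappa+C\epsilon_2<1$ and iterating the estimate from step 2 yields \eqref{a21}.

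The main obstacle is the shock boundary contraction. The linearized system couples $\chi-\chi^{(\mathcal{T})}$, $\chi'-\chi^{(\mathcal{T})\prime}$ and the left acoustic trace, so the difficulty is proving that the resulting reflection map, composed with the outlet reflection, has norm less than $1$ uniformly in $\mathcal{T}$. The entropy wave complicates this, since it is generated at the shock but does not reflect. My first attempt would be a weighted norm with different weights on $w_1,w_2,w_3$ and on $\chi$, chosen so that the constants of Lemma \ref{lemmaDissipa} directly give $\kappa<1$. Where $\gamma\le\gamma_c$, I would rely on the Mach number restriction \eqref{AA11} to keep the weights admissible.
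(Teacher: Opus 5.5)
Your architecture matches the paper's. First comes a global solution with uniform $C^1$ bounds. The paper does not rebuild this from the iteration of \S\ref{s3}, which rests on the periodic ODE result \Cref{lemmaODE1}; it simply imports \eqref{d1}. Then comes a $C^0$ estimate of the difference along 3-characteristics using the shock dissipation, damping from the shock ODE, and iteration over windows of length $\mathcal{T}_0$. Your exact coincidence of the supersonic parts after a finite time is sharper than \eqref{d2}. Straightening the shock is a harmless alternative to the paper's comparison at two shock positions.

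\textbf{The step that fails.} The one-window contraction $\|W(t+\mathcal{T}_0)\|\le(\kappa+C\epsilon)\|W(t)\|$, with $\kappa$ from \Cref{lemmaDissipa}, does not hold. Neither does the claim that the shock ODE gives ``the same kind of contraction'' for $\chi-\chi^{(\mathcal{T})}$.
\begin{itemize}
\item The damping coefficient of the shock ODE is $\partial_x\mathcal{F}=-\frac{a'}{a}\frac{u_l}{2}$, which under \eqref{a11} is only $O(\mathcal{E})$; see \eqref{Fxsmall}.
\item The forcing by the trace differences has $O(1)$ coefficients, \eqref{pFpU1}--\eqref{pFpU3}.
\end{itemize}
Over one window you therefore only get
\[
|\chi-\chi^{(\mathcal{T})}|(t+\mathcal{T}_0)\le e^{-c\mathcal{E}\mathcal{T}_0}|\chi-\chi^{(\mathcal{T})}|(t)+C\mathcal{T}_0\sup_{[t,t+\mathcal{T}_0]}\|w-w^{(\mathcal{T})}\|.
\]
Two consequences follow.
\begin{itemize}
\item The position must carry a weight of order $\mathcal{E}$ relative to the waves. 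This is exactly why the $O(\mathcal{E})$ resonance coefficient \eqref{pA3px}, multiplied by $|\partial_x\mathcal{F}|^{-1}$, produces the $O(1)$ number $\tilde\theta_d$.
\item In any such norm the shock mode decays by a factor of at best $1-O(\mathcal{E}\mathcal{T}_0)$ per window. If the wave differences vanish initially, every wave later produced by resonance is $O(\mathcal{E})|\chi-\chi^{(\mathcal{T})}|$. Hence $|(\chi-\chi^{(\mathcal{T})})'|\le C(\mathcal{E}+\epsilon)\sup|\chi-\chi^{(\mathcal{T})}|$.
\end{itemize}
Since $\kappa$ does not shrink with $\mathcal{E}$, the value $\zeta=\kappa+C\epsilon_2$ cannot be reached with $\mathcal{T}_0$ fixed by travel times. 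The missing point is that $\tilde\theta_d$ encodes the slaved periodic response $\|\phi^*\|\le\Psi_{\phi,0}^{-1}\sum_i\Psi_{\varsigma_{i,0}}\|\varsigma_i\|$ of \Cref{lemmaODE1}. In the initial value problem, $\chi-\chi^{(\mathcal{T})}$ is not slaved to the waves. It remembers its initial value and forgets it only at rate $O(\mathcal{E})$.

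\textbf{The repair.} This is what the paper does around \eqref{smallZeta}. Take $\zeta$ close to $1$, with $\alpha(1+2\bar C\mathcal{E}\mathcal{T}_0)^{-1}<\zeta<1$ as well as $(1+C\epsilon+C\mathcal{E})(\alpha_3\theta_d+\alpha_3/\alpha_2)\alpha<\zeta$. Take the position constant $C_{S,2}$ of order $C_{S,1}\mathcal{E}^{-1}$, the stability analogue of \eqref{appShockesti}. Your weighted-norm idea is the right tool here. Set $N=\max\{A,\beta B\}$, where $A$ is the wave difference, $B=|\chi-\chi^{(\mathcal{T})}|$, and $\beta\sim\mathcal{E}$ is chosen so that the ODE reads $\beta B(t+s)\le e^{-c\mathcal{E}s}\beta B(t)+(1-e^{-c\mathcal{E}s})\sup A$. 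The argument then takes two windows.
\begin{itemize}
\item In the first window, $A\le\kappa N$ once every wave has been re-emitted at the shock.
\item In the second window, $\beta B$ drops by the factor $1-(1-\kappa)(1-e^{-c\mathcal{E}\mathcal{T}_0})$, which is the true $\zeta$.
\end{itemize}

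\textbf{Two smaller corrections.}
\begin{itemize}
\item The non-isentropic system has no exact Riemann invariants, and $u\pm\frac{2c}{\gamma-1}$ couple linearly to $\partial_x s$. To use the constants of \Cref{lemmaDissipa} you should work with the paper's $\bar\Upsilon$, whose coupling to the entropy wave is quadratic.
\item The source terms are not a damping mechanism. Condition \eqref{a11} only makes them $O(\mathcal{E})$, and their linear part can have positive diagonal entries.
\end{itemize}
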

With \Cref{t2}, by taking $t \to +\infty$ in \eqref{a21}, one can directly get the follow uniqueness result for the temporal periodic transonic solution.
\begin{corollary}
	With boundary data $\rho_{b-}, u_{b-}, p_{b-}$ and $p_{b+}$ as \eqref{a8}--\eqref{a9}, under the assumptions \eqref{compatibCri} and \eqref{a11}--\eqref{AA11}, there exists a constant $\epsilon_3\in(0,\epsilon_2)$ such that for any given $\epsilon\in(0,\epsilon_3)$, any $\mathcal{T}>0$, any given boundary perturbations  $\bar{\rho}_{b-},\bar{u}_{b-},\bar{p}_{b-},\bar{p}_{b+}\in C^1$ satisfying \eqref{a13}--\eqref{a10}, the corresponding time--periodic solution $(\rho^{(\mathcal{T})},u^{(\mathcal{T})},p^{(\mathcal{T})})$ containing corresponding shock $\chi^{(\mathcal{T})}$ is unique.
\end{corollary}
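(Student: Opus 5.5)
The plan is to deduce uniqueness directly from \Cref{t2} by a limiting argument that uses periodicity. Take $\epsilon_3\in(0,\epsilon_2)$ small, with one extra constraint made precise below. Suppose $(\rho^{(\mathcal{T})},u^{(\mathcal{T})},p^{(\mathcal{T})};\chi^{(\mathcal{T})})$ is the periodic solution constructed in \Cref{t1}. Let $(\hat\rho,\hat u,\hat p;\hat\chi)$ be any other $\mathcal{T}$-periodic transonic shock solution driven by the same boundary data. We require it to lie in the perturbative class, meaning its data at $t=0$ satisfy \eqref{a20} with $\epsilon$ replaced by some admissible size. This is the natural class in which uniqueness is asserted, since \eqref{a15} gives a bound of order $C_E\epsilon$. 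Choosing $\epsilon_3$ so that $C_E\epsilon_3<\epsilon_2$ makes any solution obeying the estimate \eqref{a16}-type bound an admissible initial datum for \Cref{t2}.

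We then apply \Cref{t2} with initial data $(\hat\rho,\hat u,\hat p)(0,\cdot)$ and $x_0=\hat\chi(0)$. Well-posedness of the $C^1$ transonic shock problem, which is the local uniqueness implicit in the construction of \Cref{t2}, identifies the resulting solution with $(\hat\rho,\hat u,\hat p;\hat\chi)$. Estimate \eqref{a21} then gives, for all $t>0$,
\[
|\hat\chi(t)-\chi^{(\mathcal{T})}(t)|+\|(\hat\rho,\hat u,\hat p)(t,\cdot)-(\rho^{(\mathcal{T})},u^{(\mathcal{T})},p^{(\mathcal{T})})(t,\cdot)\|_{C^0}\le C_S\epsilon'\zeta^{\lfloor t/\mathcal{T}_0\rfloor},
\]
where the $C^0$ norms are taken on the common supersonic and subsonic regions as in \eqref{a21}.

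Fix $t\ge0$ and evaluate at $t+k\mathcal{T}$ for $k\in\mathbb{N}$. By the temporal periodicity \eqref{temPeriod} of both solutions, the left-hand side does not depend on $k$. The right-hand side tends to $0$ as $k\to\infty$ because $\zeta\in(0,1)$. Hence $\hat\chi(t)=\chi^{(\mathcal{T})}(t)$, and the states coincide on $[0,\chi(t))$ and $(\chi(t),L]$. Continuity extends the equality to the closures, so the two periodic solutions are identical.

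The main obstacle is the first step: making sure an arbitrary periodic solution actually falls within the hypotheses of \Cref{t2}, namely that it is $C^1$-close to the background with shock near $\tilde x$. Uniqueness is understood in that neighborhood, and we would state it explicitly as the class fixed by \eqref{a16}. A second point to check is that the solution produced by \Cref{t2} coincides with the given one. This follows from the standard uniqueness of piecewise $C^1$ solutions of the free-boundary problem, which we take as part of the well-posedness underlying \Cref{t2}.
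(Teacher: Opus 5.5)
Your proposal is correct and is essentially the paper's argument: the paper simply lets $t\to+\infty$ in \eqref{a21}, and you make explicit what it leaves implicit. Namely, periodicity of both solutions makes the left-hand side independent of $k$ along $t+k\mathcal{T}$; uniqueness is meant within the perturbative class, which you secure by taking $C_E\epsilon_3<\epsilon_2$ so that a competing periodic solution's time-zero data are admissible in \Cref{t2}; and uniqueness of the piecewise $C^1$ initial--boundary value problem identifies the solution from \Cref{t2} with the given periodic one. (Trivial slip: the final coincidence should be stated on $[0,\chi^{(\mathcal{T})}(t))$ and $(\chi^{(\mathcal{T})}(t),L]$ rather than with $\chi(t)$.)
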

We provide several remarks regarding the \Cref{t1,t2}.
\begin{remark}
	From the derivation process in \Cref{s3} of this paper, it can be seen that in fact, \eqref{a16} and \eqref{a21} can include the estimates for $|\partial_{t}^{2}\chi|$ and $|\partial_{t}^{2}\chi - \partial_{t}^{2}\chi^{(\mathcal{T})}|$ respectively. Moreover, analogous to \cite{Qu}, one can get the conclusion of regularity for the time--periodic solution as well as stabilization around it.
\end{remark}
\begin{remark}
	In the prior study \cite{ZHANG}, the smallness on the expansion rate of the nozzles is required, i.e., $\frac{a^{\prime}(x)}{a(x)}\le \kappa \ll 1$. Here, we relax the restriction to some extent and use a more essential assumption \eqref{a11}.
\end{remark}
\begin{remark}
	Still in the prior study \cite{ZHANG} for the isothermal gases, the inlet Mach number is limited to $\tilde{M}_{-}(0) < \sqrt{2}+1$. Here, we generalize this condition to all $\gamma \in (1,3)$, and prove that for larger gammas (especially for the air $\gamma=1.4$), no restrictions are required. Moreover, we believe the restrain \eqref{AA11} is necessary in a sense as it describes the instability of high temperature or hypersonic shocks.
\end{remark}
\subsection{A sketch of this paper}
\par For the full compressible Euler system that we are concerning, it is well--known that it has three characteristics, namely the left and right acoustic waves, and the entropy wave that moves along with the material flow. In the supersonic region, all the three waves have positive propagation speeds. Thus, the inlet boundary perturbations only propagate downstream without causing any resonance. However, across a transonic shock into the subsonic region, although the material still flows downstream, the left acoustic wave propagates backward and causes resonance at the shock. That is, stronger wave perturbations lead to stronger perturbations in shock positions, which in turn enhance the wave perturbations. In addition, there is a fundamental difficulty that the problem involves a free boundary on the left side of the subsonic region.
\par The key to solving these difficulties lies in the well--known \textsl{Rankine--Hugoniot} conditions (R--H conditions for short). By the implicit function theorem, we determine the entropy wave and the right acoustic wave emerging from the shock wave as functions of the left acoustic wave and other perturbations. At the same time, we deduce the ordinary differential equation for the shock wave position as a function of time. These help to rewrite the system in the form of wave decomposition (\cref{waveDe}).
\par The central part of this paper---and also the mechanism that ensures the theorems can be proved---is the analysis of the structural dissipation of the shock (\Cref{sectionDissi}). We show that taking into account the resonance caused by the change in shock position, the shock boundary still has a dissipative effect on the reflected waves in the subsonic region. This dissipation is entirely determined by the R--H conditions and is a physical mechanism resulting from the nature of shock waves. To the best of our knowledge, this is the first comprehensive analysis of $C^{1}$ shock structural dissipation for general ideal gases.
\par For the proof of \Cref{t1}, we design a fraction--step iteration scheme. It is basically a linearized iteration of the inhomogeneous terms and boundary conditions, plus a fractional iteration step involving the shock position.
\par It is interesting to see that the expansion of the nozzles (assumption \eqref{a2}) and the positive background velocity help to stabilize the shock position. However, they will increase the inhomogeneous term, and thus enhance the resonance. That is why we need the restriction \eqref{a11}. The stability of periodic transonic flows in a general nozzle remains open.
\par This paper is organized as follows: \Cref{s2} gives some useful preliminaries, including the existence of steady transonic shock solutions, the lemmas on the existence of periodic solutions for a kind of ODE, the change of variables, and the analysis of time--periodic solutions in the supersonic region. \Cref{sectionDissi} investigates the shock boundary condition and its structural dissipation as well as reformulates the problem in the subsonic region. \Cref{s3,s4} are subsequently devoted to providing rigorous proof of \Cref{t1} and \Cref{t2} respectively.

\section{Preliminaries}\label{s2}
In this section, we give four parts of preliminaries.
\subsection{Well--posedness of steady transonic shock solutions}\label{su1}
In this subsection, We sketch the proof of the existence and uniqueness for the steady transonic shock solutions. One can get more details in \cite{Chen,Glaz} as well as \cite{Duan}. 
\par For any $x_{in} \in [0, L]$, consider the solution governed by the system of ODEs \eqref{a6} with inlet data
\begin{align}\label{inlet}
	(\tilde{\rho},\tilde{u},\tilde{p})(x)\big|_{x=x_{in}}=(\tilde{\rho}_{in},\tilde{u}_{in},\tilde{p}_{in}).
\end{align}
For divergent nozzles with \eqref{a2}, the standard theory for ODEs guarantees the existence and  uniqueness of a classical solution on $x \in [x_{in}, L]$ if the inlet data subject to
$$\tilde{\rho}_{in}>0, \tilde{u}_{in}>0,\tilde{p}_{in}>0,  \tilde{u}_{in}\neq \tilde{c}_{in}:=\sqrt{\frac{\gamma \tilde{p}_{in}}{\tilde{\rho}_{in}}}.$$
\par Moreover, direct derivation from \eqref{a6} yields
\begin{equation}\label{b2}
	\left \{
	\begin{aligned}
		&\dfrac{d\tilde{\rho}}{dx} = -\dfrac{a^{\prime}(x)}{a(x)}\dfrac{\tilde{\rho}^{2}\tilde{u}^{2}}{\tilde{\rho}\tilde{u}^{2}-\gamma\tilde{p}} = -\dfrac{a^{\prime}(x)}{a(x)}\dfrac{\tilde{\rho}\tilde{u}^{2}}{\tilde{u}^{2}-\tilde{c}^{2}},  \\
		&\dfrac{d\tilde{u}}{dx} = \dfrac{a^{\prime}(x)}{a(x)}\dfrac{\gamma\tilde{p}\tilde{u}}{\tilde{\rho}\tilde{u}^{2}-\gamma\tilde{p}} = \dfrac{a^{\prime}(x)}{a(x)}\dfrac{\tilde{c}^{2}\tilde{u}}{\tilde{u}^{2}-\tilde{c}^{2}},	\\
		&\dfrac{d\tilde{p}}{dx} = -\dfrac{a^{\prime}(x)}{a(x)}\dfrac{\gamma\tilde{p}\tilde{\rho}\tilde{u}^{2}}{\tilde{\rho}\tilde{u}^{2}-\gamma\tilde{p}} = -\dfrac{a^{\prime}(x)}{a(x)}\dfrac{\tilde{c}^{2}\tilde{\rho}\tilde{u}^{2}}{\tilde{u}^{2}-\tilde{c}^{2}}.
	\end{aligned}
	\right . 
\end{equation}
Thus, one can easily derive (see for example \cite{Ma}) that the solution satisfies monotonicity
\begin{align}
&\dfrac{d\tilde{u}}{dx}>0,~~\dfrac{d\tilde{c}}{dx}<0,\quad\quad \text{if}~ \tilde{u}_{in}>\tilde{c}_{in}, \label{b3} \\
&\dfrac{d\tilde{u}}{dx}<0,~~\dfrac{d\tilde{c}}{dx}>0,\quad\quad \text{if}~0<\tilde{u}_{in}<\tilde{c}_{in}. \label{b4}
\end{align}
\par Next, we consider a steady transonic shock solution, as defined in \Cref{D1}, with a steady shock at $x = \tilde{x}$. We construct a formal solution as
\begin{align}\label{steadySolu}
	(\tilde{\rho},\tilde{u},\tilde{p})(x)=
	\left\{\begin{aligned}
		&(\tilde{\rho}_{-},\tilde{u}_{-},\tilde{p}_{-})(x),&&0\leq x<\tilde{x},\\
		&(\tilde{\rho}_{+},\tilde{u}_{+},\tilde{p}_{+})(x;\tilde{x}),&&\tilde{x}<x\leq L.
	\end{aligned}\right.
\end{align}
The upstream flow $(\tilde{\rho}_{-},\tilde{u}_{-},\tilde{p}_{-})(x)$ is defined as the solution to ODE system \eqref{a6} and \eqref{inlet} with 
\begin{eqnarray*}
	x_{in}=0, \quad (\tilde{\rho}_{in},\tilde{u}_{in},\tilde{p}_{in}) = (\tilde{\rho}_{-}(0),\tilde{u}_{-}(0),\tilde{p}_{-}(0)).
\end{eqnarray*}
Across the shock, we set the right states being the ones derived from the R--H conditions \eqref{hr1}--\eqref{hr3} with $\chi^{\prime}(t)\equiv 0$ as
\begin{align}
	\nonumber
	\tilde{\rho}_{r}=\frac{(\gamma+1)\tilde{\rho}_{l}^{2}\tilde{u}_{l}^{2}}{2\gamma \tilde{p}_{l}+(\gamma-1)\tilde{\rho}_{l}\tilde{u}_{l}^{2}}, \qquad \tilde{u}_{r}=\frac{\gamma-1}{\gamma+1}\tilde{u}_{l}+\frac{2\gamma}{\gamma+1}\frac{\tilde{p}_{l}}{\tilde{\rho}_{l}\tilde{u}_{l}},
	\qquad \tilde{p}_{r}=\frac{2\tilde{\rho}_{l}\tilde{u}_{l}^{2}-(\gamma-1)\tilde{p}_{l}}{\gamma+1},
\end{align}
where $(\tilde{\rho}_{l},\tilde{u}_{l},\tilde{p}_{l}):=(\tilde{\rho}_-, \tilde{u}_-, \tilde{p}_-)(\tilde{x}-)$ denoting the states on the left of the shock. We define the downstream flow $(\tilde{\rho}_{+},\tilde{u}_{+},\tilde{p}_{+})(x;\tilde{x})$ as the solution to ODE system \eqref{a6} and \eqref{inlet} with 
\begin{eqnarray*}
	x_{in}=\tilde{x}, \quad (\tilde{\rho}_{in},\tilde{u}_{in},\tilde{p}_{in}) = (\tilde{\rho}_{r},\tilde{u}_{r},\tilde{p}_{r}).
\end{eqnarray*}
Along with the first criterion of \eqref{compatibCri} and monotonicity \eqref{b3}--\eqref{b4}, one can see the upstream flow is supersonic, while the downstream one is subsonic. Thus, it meets the Lax entropy condition \eqref{Lax1}--\eqref{Lax2}. Therefore, the solution given by \eqref{steadySolu} is a well--defined transonic shock solution to \eqref{a6}--\eqref{a7} if $\tilde{p}_{+}(L;\tilde{x})$ coincides with $\tilde{p}_{+}(L)$.
\par In \cite[Lemma 3.1]{Duan}, the monotonicity and continuity of $\tilde{p}_{+}(L;\tilde{x})$ with respect to $\tilde{x}$ have been proved. That is, for a divergent nozzle ($a^{\prime}(x)>0$), 
\begin{eqnarray*}
	\dfrac{d\tilde{p}_{+}(L;\tilde{x})}{d\tilde{x}}<0.
\end{eqnarray*}
Now, we set
\begin{eqnarray*}
	p_{\min} = \tilde{p}_{+}(L;L), \quad p_{\max} = \tilde{p}_{+}(L;0),
\end{eqnarray*}
then one can see for any $\tilde{p}_{+}(L) \in I_{p} = (p_{\min}, p_{\max})$, there exists a unique $\tilde{x} \in (0,L)$ such that the steady transonic shock solution given by \eqref{steadySolu} satisfies $\tilde{p}_{+}(L;\tilde{x}) = \tilde{p}_{+}(L)$, and is thus exactly the unique steady transonic shock solution to \eqref{a6}--\eqref{a7}.
\par Throughout the paper, for fixed boundary data \eqref{a7}, we take the steady transonic shock solution of \eqref{a6}--\eqref{a7} as the background solution. Furthermore, by using the corresponding ODEs,  one can successively derive the above and below boundedness of Mach number $M$, sonic speed $c$, and $\rho$, $u$, $p$, namely, 
\begin{eqnarray*}
	&M_{\min} \le \tilde{M} \le M_{\max}, \quad c_{\min} \le \tilde{c} \le c_{\max}, \\ &u_{\min} \le \tilde{u} \le u_{\max}, \quad
	\rho_{\min} \le \tilde{\rho} \le \rho_{\max}, \quad p_{\min} \le \tilde{p} \le p_{\max},
\end{eqnarray*}
where the constant bounds only depend on $m_{a}$, $M_{a}$, $L$ and inlet data \eqref{a7}.

\subsection{Periodic solutions to ODE systems driven by periodic perturbations}\label{su11}
In this subsection, we consider the ODE systems of the follow form, which describe the dynamics of the shock position under time--periodic perturbations
\begin{align}
	\frac{d\phi(t)}{dt}=\Psi(\phi(t),\varsigma_{1}(t,\phi(t)),\varsigma_{2}(t,\phi(t)),\varsigma_{3}(t,\phi(t)),\varsigma_{4}(t,\phi(t)),
	\varsigma_{5}(t,\phi(t))),\label{bb1}
\end{align}
where $\varsigma_{i}(t,x)~(i=1,2,\ldots,5)$ are $C^{1}$ perturbation functions on $\mathbb{R}\times[0,L]$ satisfying
\begin{align}
	\varsigma_{i}(0,0)=0, \quad \varsigma_{i}(t+\mathcal{T},x)=\varsigma_{i}(t,x),\quad i=1,2,\ldots,5.\label{TB1}
\end{align} 
Moreover, we assume $\Psi(\phi,\varsigma_{1},\varsigma_{2},\varsigma_{3},\varsigma_{4},\varsigma_{5})$ is a smooth function satisfying
\begin{align}
	&\Psi(0,0,0,0,0,0)=0,\label{bb2}\\
	&\frac{\partial\Psi}{\partial\phi}(0,0,0,0,0,0)<0.\label{bb3}
\end{align}
For simplicity, we denote $\Psi(\phi,\varsigma_{1},\varsigma_{2},\varsigma_{3},\varsigma_{4},\varsigma_{5})$ by $\Psi(\phi, \bm{\varsigma})$ and
$$\Psi_{\varsigma_{i,0}}=\Big|\frac{\partial\Psi}{\partial\varsigma_{i}}(0,\bm{0})\Big|,~(i=1,2,\ldots,5),
\quad\Psi_{\phi,0}=\Big|\frac{\partial\Psi}{\partial\phi}(0,\bm{0})\Big|.$$
Under these assumptions, the following two lemmas hold. The proof of them is not the main point of this paper, so we omit it. One can find it in the appendix of~\cite{ZHANG}. Here and throughout the paper, we denote the $C^{0}$ norm by $\|\cdot\|$ for simplification.
\begin{lemma}[Existence and uniqueness of the periodic solution]\label{lemmaODE1}
	There exist constants $\varepsilon_{1}>0$ and $C_\Psi>0$ such that for any perturbations satisfying $\|\varsigma_{i}\|_{C^{1}}<\varepsilon_{1}(i=1,2,\ldots,5)$, system~\eqref{bb1} admits a unique periodic solution $\phi^{*}(t)$ with $\phi^{*}(t+\mathcal{T})=\phi^{*}(t)$.
	Furthermore, the following estimates hold
	\begin{align}
		\|\phi^{*}\|\leq(1+C_{\Psi}\varepsilon_{1})\frac{1}{\Psi_{\phi,0}}\sum_{i=1}^{5}\Psi_{\varsigma_{i,0}}\|\varsigma_{i}\|,\label{bb19}
	\end{align}
	\begin{align}
		\|{\phi^{*}}^{\prime}\|\leq
		(2+C_{\Psi}\varepsilon_{1})\sum_{i=1}^{5}\Psi_{\varsigma_{i,0}}\|\varsigma_{i}\|,\label{bb20}
	\end{align}
	\begin{align}
		\|{\phi^{*}}^{\prime\prime}\|\leq&(2+C_{\Psi}\varepsilon_{1})\Psi_{\phi,0}\sum_{i=1}^{5}\Psi_{\varsigma_{i,0}}\|\varsigma_{i}\|
		+(1+C_{\Psi}\varepsilon_{1})\sum_{i=1}^{5}\Psi_{\varsigma_{i,0}}
		\|\frac{\partial\varsigma_{i}}{\partial t}\|.\label{bb21}
		\end{align}
\end{lemma}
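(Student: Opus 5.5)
Since $\partial_\phi\Psi(0,\bm 0)=-\Psi_{\phi,0}<0$ by \eqref{bb3}, the unperturbed equation $\phi'=\Psi(\phi,\bm 0)$ has $\phi=0$ as an exponentially attracting equilibrium, and the plan is to capture the periodic orbit as a fixed point of the period map. I will write
\begin{align*}
\Psi(\phi,\bm\varsigma)=-\Psi_{\phi,0}\,\phi+\sum_{i=1}^{5}\partial_{\varsigma_i}\Psi(0,\bm 0)\,\varsigma_i+R(\phi,\bm\varsigma),\qquad |R|\le C\big(|\phi|^2+|\bm\varsigma|^2\big),
\end{align*}
where $C$ depends only on the second derivatives of $\Psi$ on a fixed neighbourhood of the origin. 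All solutions considered will stay in the ball $|\phi|\le K\varepsilon_1$, with $K$ a constant fixed below.

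\textbf{Step 1 (invariant ball).} On the sphere $|\phi|=K\varepsilon_1$ I will show that $\phi\,\phi'<0$. The linear term contributes $-\Psi_{\phi,0}K\varepsilon_1$. The forcing contributes at most $\sum_i\Psi_{\varsigma_{i,0}}\|\varsigma_i\|\le C\varepsilon_1$, and the remainder is $O(K^2\varepsilon_1^2)$. Taking $K$ large and then $\varepsilon_1$ small makes the linear term dominate, so the interval $|\phi|\le K\varepsilon_1$ is forward invariant. The flow is also $C^1$ and global for $t\ge0$, because $\varsigma_i\in C^1$ makes the right-hand side Lipschitz in $\phi$.

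\textbf{Step 2 (contraction and existence).} Let $\phi_1,\phi_2$ be two solutions in the ball and set $w=\phi_1-\phi_2$. Then $w'=\big(\partial_\phi\Psi+\sum_i\partial_{\varsigma_i}\Psi\,\partial_x\varsigma_i\big)\big|_{\text{intermediate}}\,w$. The coefficient is at most $-\Psi_{\phi,0}+C\varepsilon_1$, since $\|\partial_x\varsigma_i\|\le\varepsilon_1$ by the $C^1$ bound; this is exactly where the $C^1$ norm in the hypothesis is needed. Hence $|w(t)|\le e^{-(\Psi_{\phi,0}-C\varepsilon_1)t}|w(0)|$. The period map $P:\phi(0)\mapsto\phi(\mathcal T)$ therefore maps the ball into itself with Lipschitz constant $e^{-(\Psi_{\phi,0}/2)\mathcal T}<1$. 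The $\mathcal T$-periodicity \eqref{TB1} makes $P$ well defined as the generator of the dynamics, so its unique fixed point gives the periodic solution $\phi^*$.

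\textbf{Uniqueness.} Any other periodic solution small enough to lie in the ball is also a fixed point of $P$, so it equals $\phi^*$. For periodic solutions outside the ball one needs a size restriction; the statement should be read as uniqueness within such a small neighbourhood, and I would make this explicit.

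\textbf{Step 3 (sharp constants).} The main obstacle is obtaining the precise constants $1+C_\Psi\varepsilon_1$ and $2+C_\Psi\varepsilon_1$ rather than crude ones.

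For \eqref{bb19}, I will take $t_0$ where $|\phi^*|$ attains its maximum over one period; at such a point $(\phi^*)'(t_0)=0$. Then $0=\Psi(\phi^*(t_0),\bm\varsigma(t_0,\phi^*(t_0)))$, which gives
\begin{align*}
\Psi_{\phi,0}\|\phi^*\|\le\sum_i\Psi_{\varsigma_{i,0}}\|\varsigma_i\|+C\varepsilon_1\big(\|\phi^*\|+\textstyle\sum_i\|\varsigma_i\|\big),
\end{align*}
after using $|R|\le C\varepsilon_1(|\phi|+|\bm\varsigma|)$ in the small ball. Absorbing the $\|\phi^*\|$ term on the left yields \eqref{bb19}. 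One caveat: the leftover $\sum_i\|\varsigma_i\|$ term has to be bounded by $\sum_i\Psi_{\varsigma_{i,0}}\|\varsigma_i\|$, which requires $\Psi_{\varsigma_{i,0}}\neq0$; otherwise I must keep only those $\varsigma_i$ that genuinely appear, or use the exact derivative at intermediate points instead.

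For \eqref{bb20}, I will insert \eqref{bb19} into the equation, which gives $|(\phi^*)'|\le\Psi_{\phi,0}\|\phi^*\|+\sum_i\Psi_{\varsigma_{i,0}}\|\varsigma_i\|+\dots\le(2+C\varepsilon_1)\sum_i\Psi_{\varsigma_{i,0}}\|\varsigma_i\|$.

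For \eqref{bb21}, I will differentiate the equation:
\begin{align*}
(\phi^*)''=\partial_\phi\Psi\,(\phi^*)'+\sum_i\partial_{\varsigma_i}\Psi\,\big(\partial_t\varsigma_i+\partial_x\varsigma_i\,(\phi^*)'\big).
\end{align*}
The first term is bounded by $(\Psi_{\phi,0}+C\varepsilon_1)$ times \eqref{bb20}. The $\partial_t\varsigma_i$ terms give $(1+C\varepsilon_1)\sum_i\Psi_{\varsigma_{i,0}}\|\partial_t\varsigma_i\|$. The $\partial_x\varsigma_i$ terms are $O(\varepsilon_1)\|(\phi^*)'\|$ and are absorbed into $C_\Psi\varepsilon_1$.
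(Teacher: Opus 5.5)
The paper does not prove this lemma. It omits the argument and refers to the appendix of an earlier work, so there is no in-paper proof to compare yours with. Your proposal is a correct, self-contained proof of the statement, once suitably qualified as below.

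The period-map argument is sound. The inward-pointing field on $|\phi|=K\varepsilon_1$ gives forward invariance. The one-sided bound $\partial_\phi\big[\Psi(\phi,\bm\varsigma(t,\phi))\big]\le-\Psi_{\phi,0}+C\varepsilon_1$, where $\|\partial_x\varsigma_i\|$ is needed as you say, makes $P$ a contraction for every $\mathcal T>0$. The $t$-periodicity of the right-hand side then turns the fixed point into a $\mathcal T$-periodic solution.

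The most useful feature is Step 3. Evaluating the equation at an extremum of $\phi^*$, where $(\phi^*)'=0$, gives the sharp constant $1/\Psi_{\phi,0}$ in \eqref{bb19} directly and uniformly in $\mathcal T$. The estimates \eqref{bb20} and \eqref{bb21} then follow from the equation and its $t$-derivative as you describe.

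The same trick also improves \Cref{lemmaODE2}. Write $w=\phi_1^*-\phi_2^*$ as $w'=a(t)w+b(t)$. Here $a$ is the averaged $\phi$-derivative of $\Psi(\phi,\bm\varsigma_1(t,\phi))$ between $\phi_2^*$ and $\phi_1^*$, so $-a\ge\Psi_{\phi,0}-C\varepsilon_2$, and $b$ is the difference of the two right-hand sides at $\phi_2^*$. At a maximum point $t_0$ of $|w|$ you get $\|w\|\le|b(t_0)|/|a(t_0)|$. This is \eqref{bb33} without the factors $\exp(\Psi_{\phi,0}\mathcal T)$ and $1+C_\Psi\mathcal T\varepsilon_2$. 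That $\mathcal T$-uniformity is exactly what is needed after \eqref{c56}. There the paper replaces $\exp(|\partial_x\mathcal F|\mathcal T)$ by $1+O(1)\mathcal E$, even though \Cref{t1} is asserted for every $\mathcal T>0$.

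Both caveats you raise are genuine defects of the statement, not of your argument.

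\textbf{Uniqueness.} Global uniqueness fails even in the nondegenerate case. For $\Psi=-\phi+\phi^3+\varsigma_1+\dots+\varsigma_5$ and $\bm\varsigma\equiv0$, the constants $0$ and $\pm1$ are all periodic solutions. What your argument gives is uniqueness among periodic solutions that meet the ball, since they then stay in it by forward invariance. Running the Step 2 bound on a fixed neighbourhood $|\phi|\le r_0$ also gives uniqueness among all periodic solutions with $\|\phi\|\le r_0$, where $r_0$ does not depend on $\varepsilon_1$.

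\textbf{Nondegeneracy.} The estimate \eqref{bb19} requires $\min_i\Psi_{\varsigma_{i,0}}>0$. Take $\Psi=-\phi+\varsigma_1^2$, $\varsigma_1=\delta\sin(2\pi t/\mathcal T)$ and $\varsigma_2=\dots=\varsigma_5=0$. Then the right-hand side of \eqref{bb19} vanishes, while the periodic solution has mean $\delta^2/2$. You use the same proviso, without flagging it, in \eqref{bb21}. There the coefficient $|\partial_{\varsigma_i}\Psi(\phi^*,\bm\varsigma)|\le\Psi_{\varsigma_{i,0}}+C\varepsilon_1$ in front of $\|\partial_t\varsigma_i\|$ has to be written as $(1+C_\Psi\varepsilon_1)\Psi_{\varsigma_{i,0}}$, which again needs $\Psi_{\varsigma_{i,0}}>0$.
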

\begin{lemma}[$C^{1}$ stability estimates with respect to perturbations]\label{lemmaODE2}
	Let be $\phi_{1}^{*}(t)$ and $\phi_{2}^{*}(t)$ be two periodic solutions driven by different group of perturbation functions ${\varsigma_{1}}_{i}(t,\phi)(i=1,2,\ldots,5)$ and ${\varsigma_{2}}_{i}(t,\phi)(i=1,2,\ldots,5)$ with same periodicity, i.e.,
	\begin{align}
		\frac{d\phi_{j}}{dt}(t)=\Psi(\phi_{j}(t), \bm{\varsigma_{j}}(t,\phi_{j}(t))), \quad j=1,2,\label{bb29}
	\end{align}
	and
	\begin{align*}
		\phi_{1}^{*}(t+\mathcal{T})=\phi_{1}^{*}(t),~~\phi_{2}^{*}(t+\mathcal{T})=\phi_{2}^{*}(t),\quad \forall t\in\mathbb{R}.
	\end{align*}
	Then there exist constants $\varepsilon_{2}>0$ such that for any perturbations satisfying $\|{\varsigma_{1}}_{i}\|_{C^{1}}\leq\varepsilon_{2},~\|{\varsigma_{2}}_{i}\|_{C^{1}}\leq\varepsilon_{2},$
	the following estimates hold
	\begin{align}
		\|\phi_{1}^{*}-\phi_{2}^{*}\|\leq(1+C_{\Psi}\mathcal{T}\varepsilon_{2})\exp(\Psi_{\phi,0}\mathcal{T})\frac{1}{\Psi_{\phi,0}}
		\sum_{i=1}^{5}\Psi_{\varsigma_{i,0}}\|{\varsigma_{1}}_{i}-{\varsigma_{2}}_{i}\|,
		\label{bb33}
	\end{align}
	\begin{align}
		\|{{\phi}_{1}^{*}}^{\prime}-{{\phi}_{2}^{*}}^{\prime}\|
		\leq&(1+C_{\Psi}\mathcal{T}\varepsilon_{2})\exp(\Psi_{\phi,0}\mathcal{T})\sum_{i=1}^{5}\Psi_{\varsigma_{i,0}}\|{\varsigma_{1}}_{i}
		-{\varsigma_{2}}_{i}\|
		+(1+C_{\Psi}\varepsilon_{2})\sum_{i=1}^{5}\Psi_{\varsigma_{i,0}}\|{\varsigma_{1}}_{i}-{\varsigma_{2}}_{i}\|.
		\label{bb34}
	\end{align}
\end{lemma}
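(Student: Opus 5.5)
The plan is to compare the two periodic solutions through their difference $w(t)=\phi_1^*(t)-\phi_2^*(t)$ and treat it as a linear, non-autonomous ODE with a strictly negative coefficient. Subtracting the two equations \eqref{bb29} and applying the mean value theorem in both arguments gives
\begin{align*}
	w'(t)=A(t)\,w(t)+F(t),
\end{align*}
where
\begin{align*}
	A(t)=\int_0^1\Big(\partial_\phi\Psi+\sum_{i}\partial_{\varsigma_i}\Psi\,\partial_x{\varsigma_{1}}_i(t,\cdot)\Big)\,d\theta,
\end{align*}
evaluated along the segment joining the two states, and
\begin{align*}
	F(t)=\sum_{i}\int_0^1\partial_{\varsigma_i}\Psi\,d\theta\;\big({\varsigma_{1}}_i-{\varsigma_{2}}_i\big)(t,\phi_2^*(t)).
\end{align*}
The $\phi$-dependence of ${\varsigma_1}_i$ is absorbed into $A$ by writing ${\varsigma_1}_i(t,\phi_1^*)-{\varsigma_2}_i(t,\phi_2^*)=[{\varsigma_1}_i(t,\phi_1^*)-{\varsigma_1}_i(t,\phi_2^*)]+[{\varsigma_1}_i-{\varsigma_2}_i](t,\phi_2^*)$. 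By \Cref{lemmaODE1}, both $\phi_j^*$ are $O(\varepsilon_2)$. Together with $\|{\varsigma_j}_i\|_{C^1}\le\varepsilon_2$, smoothness of $\Psi$ and \eqref{bb3}, this gives
\begin{align*}
	A(t)\le-\Psi_{\phi,0}+C_\Psi\varepsilon_2<0,\qquad |A(t)|\le\Psi_{\phi,0}+C_\Psi\varepsilon_2,\qquad |F(t)|\le(1+C_\Psi\varepsilon_2)\sum_i\Psi_{\varsigma_{i,0}}\|{\varsigma_1}_i-{\varsigma_2}_i\|.
\end{align*}

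For \eqref{bb33}, both $A$ and $F$ are $\mathcal{T}$-periodic, and $w$ is $\mathcal{T}$-periodic. Integrating the linear ODE over one period and using $w(t+\mathcal{T})=w(t)$ gives the representation
\begin{align*}
	w(t)=\big(1-e^{\int_t^{t+\mathcal{T}}A}\big)^{-1}\int_t^{t+\mathcal{T}}e^{\int_s^{t+\mathcal{T}}A}F(s)\,ds.
\end{align*}
Since $A<0$, this simplifies cleanly. Bounding the integral by $\int_0^{\mathcal T}e^{-(\Psi_{\phi,0}-C\varepsilon_2)\tau}d\tau\,\|F\|$ and dividing by $1-e^{-(\Psi_{\phi,0}-C\varepsilon_2)\mathcal{T}}$ gives exactly $\|F\|/(\Psi_{\phi,0}-C\varepsilon_2)$. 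This is already better than \eqref{bb33}. The stated factor $(1+C_\Psi\mathcal{T}\varepsilon_2)\exp(\Psi_{\phi,0}\mathcal{T})$ leaves room for a cruder argument, for instance Gronwall over one period combined with a comparison of $w$ at a maximum point. The constant $C_\Psi\mathcal{T}\varepsilon_2$ can come from replacing $e^{C\varepsilon_2\mathcal{T}}$ by $1+C\mathcal{T}\varepsilon_2$, assuming $\varepsilon_2\mathcal{T}$ is bounded or enlarging constants. I would present the sharp version and note that it implies \eqref{bb33}.

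For \eqref{bb34}, I return to the equation itself: $\|w'\|\le\|A\|\,\|w\|+\|F\|$. Inserting the bound for $\|w\|$, the term $\|A\|\,\|w\|$ is at most $(\Psi_{\phi,0}+C\varepsilon_2)\cdot\frac{1}{\Psi_{\phi,0}}(\dots)$. This yields the first summand of \eqref{bb34}, with the same exponential prefactor absorbing $1+C\varepsilon_2$. The term $\|F\|$ yields the second summand.

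The main obstacle is the bookkeeping of the $\phi$-dependence of the perturbations. The term $\partial_x{\varsigma_1}_i$ must go into the coefficient $A$ and not into $F$, since otherwise $F$ would contain $\|w\|$ and the estimate would be circular. This requires the $C^1$ smallness $\varepsilon_2$ in $x$, not just in $t$, so that $A$ stays uniformly negative. Choosing $\varepsilon_2\le\varepsilon_1$ small enough that $C_\Psi\varepsilon_2<\Psi_{\phi,0}/2$ guarantees this, and it is also what lets us invoke \Cref{lemmaODE1} for the existence and smallness of both $\phi_j^*$.
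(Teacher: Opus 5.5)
The paper gives no proof of this lemma; it defers to the appendix of \cite{ZHANG}. The shape of the constants, $(1+C_{\Psi}\mathcal{T}\varepsilon_{2})\exp(\Psi_{\phi,0}\mathcal{T})$, suggests a Gronwall estimate over one period there. Your route is different and cleaner. You write $w=\phi_1^*-\phi_2^*$ as the periodic solution of $w'=A(t)w+F(t)$, correctly putting the $\partial_x{\varsigma_1}_i$ contribution into the coefficient so that $-A\ge\Psi_{\phi,0}-C\varepsilon_2>0$. You then read off $\|w\|\le\|F\|/(\Psi_{\phi,0}-C\varepsilon_2)$ from the periodic variation-of-constants formula. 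An even quicker way: at an extremum $t_0$ of the periodic function $w$ one has $w'(t_0)=0$, so $|w(t_0)|=|F(t_0)|/|A(t_0)|$.

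This argument is correct, and what it buys is uniformity in $\mathcal{T}$: there is no $e^{\Psi_{\phi,0}\mathcal{T}}$ loss. That is what the uniform ``any $\mathcal{T}>0$'' in \Cref{t1} needs; right after \eqref{c56} the paper uses $\exp(|\partial_x\mathcal{F}|\mathcal{T})=1+O(\mathcal{E})$, which holds only for bounded $\mathcal{T}$. For \eqref{bb34}, set $S:=\sum_i\Psi_{\varsigma_{i,0}}\|{\varsigma_1}_i-{\varsigma_2}_i\|$. Then $\|w'\|\le\|A\|\,\|w\|+\|F\|\le(2+C\varepsilon_2)S$, while the right side of \eqref{bb34} is at least $(2+C_\Psi\varepsilon_2)S$. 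So \eqref{bb34} holds for every $\mathcal{T}$ once $C_\Psi\ge C$. Absorb the $O(\varepsilon_2)$ surplus into the second summand, which carries no $\mathcal{T}$, not into the exponential prefactor as you proposed.

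Two caveats on how you compare with the statement.

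\textbf{The implication to \eqref{bb33}.} Your sharp bound $\|w\|\le(1+C\varepsilon_2)\Psi_{\phi,0}^{-1}S$ implies \eqref{bb33} only when $(1+C_\Psi\mathcal{T}\varepsilon_2)e^{\Psi_{\phi,0}\mathcal{T}}\ge1+C\varepsilon_2$, e.g.\ for $\mathcal{T}\ge C\varepsilon_2/\Psi_{\phi,0}$. As $\mathcal{T}\to0$ the right side of \eqref{bb33} tends to $\Psi_{\phi,0}^{-1}S$ with no $\varepsilon_2$-slack, and \eqref{bb33} genuinely fails there. Take $\Psi=-a\phi+b\varsigma_1$, ${\varsigma_2}_1=\kappa x$, and ${\varsigma_1}_1=\kappa x+h(t)$. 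Here $h$ is a $\mathcal{T}$-periodic plateau of height $\eta=\mathcal{T}^2$, vanishing on $\mathcal{T}\mathbb{Z}$, with $|h'|\le\kappa$; all hypotheses, including $\varsigma_i(0,0)=0$, hold for small $\kappa$. Then $\phi_2^*\equiv0$, while $\|\phi_1^*\|=\frac{b\eta}{a-b\kappa}(1+o(1))$ as $\mathcal{T}\to0$. This exceeds $(1+C_\Psi\mathcal{T}\varepsilon_2)e^{a\mathcal{T}}\frac{b\eta}{a}$ for small $\mathcal{T}$. So state the comparison for $\mathcal{T}$ bounded below, or let $C_\Psi$ depend on that bound; no argument can do better.

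\textbf{Positivity of the $\Psi_{\varsigma_{i,0}}$.} Your estimate $|F|\le(1+C\varepsilon_2)S$, like the statement itself, tacitly requires every $\Psi_{\varsigma_{i,0}}>0$. In general you only get $\sum_i(\Psi_{\varsigma_{i,0}}+C\varepsilon_2)\|{\varsigma_1}_i-{\varsigma_2}_i\|$.
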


\subsection{Variable substitution and the wave decomposition}\label{su2}
Now, we start with some basic analysis of system \eqref{a1} and carry out a substitution into a set of approximate diagonalization variables.
\par Denote the unknowns by $U=\left(\rho, u, p\right)^{\top}$, the fixed background solution by $(\tilde{\rho}, \tilde{u}, \tilde{p})(x)$, the background shock position by $\tilde{x}$, and the perturbations by
\begin{eqnarray*}
	\bar{\rho} = \rho - \tilde{\rho}, \quad \bar{u} = u - \tilde{u}, \quad \bar{p} = p - \tilde{p}.
\end{eqnarray*} 
We choose a set of variables $\Upsilon (x,U) = (\Upsilon_{1}, \Upsilon_{2}, \Upsilon_{3})^{\top}$ given by
\begin{equation}\label{newVaria}
	\Upsilon_{1} = 2\sqrt{p} - \sqrt{\gamma \rho}\left(u - \tilde{u}\right), \quad \Upsilon_{2} = \ln p - \gamma \ln \rho, \quad
	\Upsilon_{3} = 2\sqrt{p} + \sqrt{\gamma \rho}\left(u - \tilde{u}\right).
\end{equation}
For the background solution, we denote
\begin{equation}
	\tilde{\Upsilon}_{1} = 2\sqrt{\tilde{p}}, \quad
	\tilde{\Upsilon}_{2} = \ln \tilde{p} - \gamma \ln \tilde{\rho}, \quad
	\tilde{\Upsilon}_{3} = 2\sqrt{\tilde{p}}.
\end{equation}
Accordingly, we denote the perturbation variables by $\bar{\Upsilon}(x,U) = \left(\bar{\Upsilon}_{1}, \bar{\Upsilon}_{2}, \bar{\Upsilon}_{3}\right)^{\top}$ as
\begin{equation}\label{upPertur}
	\bar{\Upsilon}_{1} = \Upsilon_{1} - \tilde{\Upsilon}_{1}, \quad \bar{\Upsilon}_{2} = \Upsilon_{2} - \tilde{\Upsilon}_{2}, \quad \bar{\Upsilon}_{3} = \Upsilon_{3} - \tilde{\Upsilon}_{3}.
\end{equation}
\par In order to carry out the variable substitution, we check the Jacobian
\begin{equation}\label{Jacobi}
	\nabla \bar{\Upsilon}\left(x,U\right) := \dfrac{\partial\left(\bar{\Upsilon}_{1}, \bar{\Upsilon}_{2}, \bar{\Upsilon}_{3}\right)}{\partial\left(\rho, u, p\right)} = 
	\begin{pmatrix}
		-\dfrac{1}{2}\sqrt{\dfrac{\gamma}{\rho}}\left(u-\tilde{u}\right) & -\sqrt{\gamma \rho} & \dfrac{1}{\sqrt{p}} \\[2ex]
		
		-\dfrac{\gamma}{\rho} & 0 & \dfrac{1}{p} \\[2ex]
		
		\dfrac{1}{2}\sqrt{\dfrac{\gamma}{\rho}}\left(u-\tilde{u}\right) & \sqrt{\gamma \rho} & \dfrac{1}{\sqrt{p}}
	\end{pmatrix}
	,
\end{equation}
and its inverse matrix
\begin{equation}\label{inverVariable}
	\left(\nabla \bar{\Upsilon}\left(x,U\right)\right)^{-1} =\dfrac{\partial\left(\rho, u, p\right)}{\partial\left(\bar{\Upsilon}_{1}, \bar{\Upsilon}_{2}, \bar{\Upsilon}_{3}\right)} =
	\begin{pmatrix}
		\dfrac{\rho}{2\gamma \sqrt{p}} & - \dfrac{\rho}{\gamma} & \dfrac{\rho}{2\gamma \sqrt{p}} \\[2ex]
		-\dfrac{1}{2\sqrt{\gamma \rho}}-\dfrac{u-\tilde{u}}{4\gamma\sqrt{p}} & \dfrac{u-\tilde{u}}{2\gamma} & \dfrac{1}{2\sqrt{\gamma \rho}}-\dfrac{u-\tilde{u}}{4\gamma\sqrt{p}} \\[2ex]
		\dfrac{\sqrt{p}}{2} & 0 & \dfrac{\sqrt{p}}{2}
	\end{pmatrix}
	.
\end{equation}
Therefore, under the assumption on smallness of perturbation, one can use the variable substitution $U = U\left(x,\bar{\Upsilon}\right)$. 
\par Next, we turn to the equivalent system under new variables $\bar{\Upsilon}$. For $C^{1}$ solutions, the system \eqref{a1} is equivalent to
\begin{equation}\label{dhyper}
	U_{t} + A(U)U_{x} = f\left(x,U\right),
\end{equation}
where
\begin{equation}\label{coeff}
	A(U) = 
	\begin{pmatrix}
		u & \rho & 0 \\
		0 & u & \rho^{-1} \\
		0 & \gamma p & u
	\end{pmatrix}
	\qquad f(x,U) = -\frac{a^{\prime}(x)}{a(x)}\left(\rho u, 0, \gamma p u\right)^{\top}.
\end{equation}
\par Applying the variable substitution \eqref{newVaria}--\eqref{upPertur}, one shall get the equivalent system
\begin{equation}\label{equivSys}
	\bar{\Upsilon}_{t} + \bar{A}\left(x, U(\bar{\Upsilon})\right)\bar{\Upsilon}_{x} = \bar{f}\left(x, U(\bar{\Upsilon})\right),
\end{equation}
where 
\begin{equation}\label{inhoTerm}
	\begin{split}
		\bar{A}\left(x, U(\bar{\Upsilon})\right) = \nabla \bar{\Upsilon}\left(x,U\right)\cdot A(U) \cdot \left(\nabla \bar{\Upsilon}\left(x,U\right)\right)^{-1},\\
		\bar{f}\left(x, U(\bar{\Upsilon})\right) = \nabla \bar{\Upsilon}\left(x,U\right) f(x,U) + \bar{A}\left(x, U(\bar{\Upsilon})\right)\dfrac{\partial}{\partial x}\bar{\Upsilon}\left(x,U\right).
	\end{split}
\end{equation}
From \eqref{Jacobi}, \eqref{inverVariable} and \eqref{coeff}, one has
\begin{equation}\label{transportCo}
	\bar{A}\left(x, U\right) = 
	\begin{pmatrix}
		u - c + \dfrac{\left(u-\tilde{u}\right)^{2}}{8c} & \sqrt{p}\left(\dfrac{u-\tilde{u}}{2}-\dfrac{\left(u-\tilde{u}\right)^{2}}{4c}\right) & -\dfrac{u-\tilde{u}}{2} + \dfrac{\left(u-\tilde{u}\right)^{2}}{8c} \\[2ex]
		0 & u & 0 \\[2ex]
		-\dfrac{u-\tilde{u}}{2} - \dfrac{\left(u-\tilde{u}\right)^{2}}{8c} & \sqrt{p}\left(\dfrac{u-\tilde{u}}{2}+\dfrac{\left(u-\tilde{u}\right)^{2}}{4c}\right) & u + c - \dfrac{\left(u-\tilde{u}\right)^{2}}{8c}	
	\end{pmatrix}
	.
\end{equation}
The system \eqref{equivSys} is strictly hyperbolic with three distinct eigenvalues
\begin{equation}
	\lambda_{1} = u - c, \quad \lambda_{2} = u, \quad \lambda_{3} = u + c.
\end{equation}
and one can choose the corresponding left eigenvectors
\begin{equation}\label{eigenVector}
	\begin{split}
		l_{1} &= \left(1, -2\sqrt{p}\dfrac{u- \tilde{u}}{4c + u - \tilde{u}}, \dfrac{u- \tilde{u}}{4c + u - \tilde{u}}\right) =: \left(1, l_{12}, l_{13}\right), \\
		l_{2} &= \left(0,1,0\right), \\
		l_{3} &= \left(\dfrac{u- \tilde{u}}{u - \tilde{u} - 4c}, -2\sqrt{p}\dfrac{u- \tilde{u}}{u - \tilde{u} -4c}, 1\right) =: \left(l_{31}, l_{32}, 1\right).
	\end{split}
\end{equation}
\par To calculate the inhomogeneous term, one can use the ODE system for the background solution \eqref{b2}, combining \eqref{Jacobi}, \eqref{inhoTerm} and \eqref{transportCo}, to get
\begin{equation}\label{source}
	\bar{f}\left(x, U(\bar{\Upsilon})\right) = -\frac{a^{\prime}(x)}{a(x)}\cdot\left(f_{1,L}+ f_{1,NL}, 0 ,f_{3,L} +f_{3,NL}\right)^{\top},
\end{equation}
where 
\begin{equation}
	\nonumber
	\begin{split}
		f_{1,L} = \gamma\sqrt{p}u -\dfrac{u-c}{\tilde{u} -\tilde{c}}\left(\gamma\sqrt{\tilde{p}}\tilde{u}\right)+ \dfrac{\gamma\sqrt{\tilde{p}}\tilde{u}}{2\left(\tilde{u}+\tilde{c}\right)}\left(u - \tilde{u}\right) - \dfrac{\sqrt{\gamma\rho}u}{2}\left(u-\tilde{u}\right) - \dfrac{u-c}{\tilde{u}^{2} - \tilde{c}^{2}}\sqrt{\gamma}\tilde{c}^{2}\tilde{u}\left(\sqrt{\rho} - \sqrt{\tilde{\rho}}\right), \\
		f_{3,L} = \gamma\sqrt{p}u -\dfrac{u+c}{\tilde{u} +\tilde{c}}\left(\gamma\sqrt{\tilde{p}}\tilde{u}\right)+ \dfrac{\gamma\sqrt{\tilde{p}}\tilde{u}}{2\left(\tilde{u}-\tilde{c}\right)}\left(u - \tilde{u}\right) + \dfrac{\sqrt{\gamma\rho}u}{2}\left(u-\tilde{u}\right) + \dfrac{u+c}{\tilde{u}^{2} - \tilde{c}^{2}}\sqrt{\gamma}\tilde{c}^{2}\tilde{u}\left(\sqrt{\rho} - \sqrt{\tilde{\rho}}\right), 
	\end{split}
\end{equation}
and $f_{1,NL}, f_{3,NL}$ are nonlinear higher--order terms with respect to $\rho- \tilde{\rho},  u-\tilde{u}, p -\tilde{p}$. Furthermore, we focus on the linear coefficients of the the inhomogeneous term with respect to $\bar{\Upsilon}$
\begin{equation}
	\nabla_{\bar{\Upsilon}}\bar{f}\left(x, U(\bar{\Upsilon})\right)\big|_{\bar{\Upsilon}=0} = \nabla_{U}\bar{f}\big|_{U=\tilde{U}}\cdot\dfrac{\partial U}{\partial \bar{\Upsilon}}\big|_{\bar{\Upsilon}=0}.
\end{equation}
From \eqref{inverVariable} and \eqref{source}, one has
\begin{equation}\label{linearPart}
	\nabla_{\bar{\Upsilon}}\bar{f}\left(x, U(0)\right) = -\frac{a^{\prime}(x)}{a(x)}
	\begin{pmatrix}
		\dfrac{\left(\gamma+1\right)\tilde{u}^{2}}{4\left(\tilde{u}-\tilde{c}\right)} + \dfrac{\tilde{c}^{2}}{2\left(\tilde{u}+\tilde{c}\right)} - \tilde{c} & \sqrt{\tilde{p}}\dfrac{\tilde{u}^{2}\tilde{c}}{\tilde{u}^{2}-
			\tilde{c}^{2}} & \dfrac{\left(\gamma-1\right)\tilde{u}^{2}}{4\left(\tilde{u}-\tilde{c}\right)} - \dfrac{\tilde{c}^{2}}{2\left(\tilde{u}-\tilde{c}\right)} \\[2ex]
		0 & 0 & 0 \\[2ex]
		\dfrac{\left(\gamma-1\right)\tilde{u}^{2}}{4\left(\tilde{u}+\tilde{c}\right)} - \dfrac{\tilde{c}^{2}}{2\left(\tilde{u}+\tilde{c}\right)} & -\sqrt{\tilde{p}}\dfrac{\tilde{u}^{2}\tilde{c}}{\tilde{u}^{2}-
			\tilde{c}^{2}} & \dfrac{\left(\gamma+1\right)\tilde{u}^{2}}{4\left(\tilde{u}+\tilde{c}\right)} + \dfrac{\tilde{c}^{2}}{2\left(\tilde{u}-\tilde{c}\right)} + \tilde{c}
	\end{pmatrix}
	.
\end{equation}
At this point, one can see that if $\tilde{u}-\tilde{c}$ is relatively small, then
\begin{equation}
	\nonumber
	\left(\nabla_{\bar{\Upsilon}}\bar{f}\left(x, U(0)\right)\right)_{11} > 0, \qquad \left(\nabla_{\bar{\Upsilon}}\bar{f}\left(x, U(0)\right)\right)_{33} >0,
\end{equation}
which may cause $\bar{\Upsilon}_{1}, \bar{\Upsilon}_{3}$ to grow exponentially. In order to curb this growth, we suppose that the inlet state satisfy
\begin{equation}\label{smallCondition}
	\frac{a^{\prime}(x)}{a(x)}c_{0} < \mathcal{E},
\end{equation} 
for some small constant $\mathcal{E}$. Here $c_0$ is defined by
\begin{equation}\label{staConst}
	c_{0}^{2} = \left(\gamma -1\right)\tilde{B} :=  \tilde{c}_{-}(0)^{2} + \dfrac{\gamma-1}{2}\tilde{u}_{-}(0)^{2} = \tilde{c}^{2} + \dfrac{\gamma -1}{2}\tilde{u}^{2}.
\end{equation}
The last equality is due to the Bernoulli's law. Since \eqref{staConst} guarantees that $c_{0} > \max_{x} \tilde{c}(x)$, one can see that  \eqref{smallCondition} implies
\begin{equation}
	\Vert \nabla_{\bar{\Upsilon}}\bar{f}\left(x, U(0)\right) \Vert < C_{g}\mathcal{E}.
\end{equation}
The constant $C_{g}$  only depends on the inlet boundary data.
\par At the end of this subsection, we write the system in the form of wave decomposition. Multiplying \eqref{equivSys} by the left eigenvectors $l_{i}, i = 1,2,3$ from left, one can get
\begin{equation}\label{waveDe}
	\begin{aligned}
		\partial_{t}\bar{\Upsilon}_{1} + \lambda_{1}\partial_{x}\bar{\Upsilon}_{1} &=  -\frac{a^{\prime}(x)}{a(x)}f_{1,L} -\frac{a^{\prime}(x)}{a(x)}f_{1,NL} - l_{12}\left(\partial_{t}\bar{\Upsilon}_{2} + \lambda_{1}\partial_{x}\bar{\Upsilon}_{2}\right) \\
		&\qquad\qquad\qquad\qquad -l_{13}\left[\partial_{t}\bar{\Upsilon}_{3} + \lambda_{1}\partial_{x}\bar{\Upsilon}_{3} +\frac{a^{\prime}(x)}{a(x)}\left(f_{3,L} + f_{3,NL}\right)\right], \\
		\partial_{t}\bar{\Upsilon}_{2} + \lambda_{2}\partial_{x}\bar{\Upsilon}_{2} &= 0,   \\
		\partial_{t}\bar{\Upsilon}_{3} + \lambda_{3}\partial_{x}\bar{\Upsilon}_{3} &= -\frac{a^{\prime}(x)}{a(x)}f_{3,L} - \frac{a^{\prime}(x)}{a(x)}f_{3,NL} - l_{32}\left(\partial_{t}\bar{\Upsilon}_{2} + \lambda_{3}\partial_{x}\bar{\Upsilon}_{2}\right)\\
		&\qquad\qquad\qquad\qquad
		- l_{31}\left[\partial_{t}\bar{\Upsilon}_{1} + \lambda_{3}\partial_{x}\bar{\Upsilon}_{1} +\frac{a^{\prime}(x)}{a(x)}\left(f_{1,L} + f_{1,NL}\right)\right].
	\end{aligned}
\end{equation}
In an obvious way, the boundary condition \eqref{a5} with \eqref{a8}--\eqref{a13} can be rewritten as
\begin{align}
	\bar{\Upsilon}_{1}(t,0)=\bar{\Upsilon}_{1,b-}(t),\quad &\bar{\Upsilon}_{2}(t,0)=\bar{\Upsilon}_{2,b-}(t),\quad \bar{\Upsilon}_{3}(t,0)=\bar{\Upsilon}_{3,b-}(t),\label{inletBoundary}\\
	&\bar{\Upsilon}_{1}(t,L)=-\bar{\Upsilon}_{3}(t,L) + \bar{\Upsilon}_{1,b+}(t), \label{outletBoundary}
\end{align}
with some constant $C_{b}>0$ such that
\begin{align}\label{boundarySmall}
	\|\bar{\Upsilon}_{1,b-}\|_{C^{1}(\mathbb{R}_{+})}+\|\bar{\Upsilon}_{2,b-}\|_{C^{1}(\mathbb{R}_{+})}+\|\bar{\Upsilon}_{3,b-}\|_{C^{1}(\mathbb{R}_{+})}+\|\bar{\Upsilon}_{1,b+}\|_{C^{1}(\mathbb{R}_{+})}\leq C_{b}\epsilon,
\end{align}
and $\mathcal{T}$--periodicity
\begin{align}
	\bar{\Upsilon}_{i,b-}(t+\mathcal{T})=\bar{\Upsilon}_{i,b-}(t), \quad i = 1,2,3, \quad \bar{\Upsilon}_{1,b+}(t+\mathcal{T})=\bar{\Upsilon}_{1,b+}(t). \label{boundaryPeriodicity}
\end{align}
\subsection{Time--periodic solutions in the supersonic region}
We consider flows in the supersonic region, for which all the three eigenvalues are positive. Building upon the characteristic analysis framework~\cite{Ma,Rauch,Yuan}, one has the follow lemma.
\begin{lemma}\label{existSuper}
	There exists a constant $\epsilon_{0} > 0$ such that for any $\epsilon \in (0,\epsilon_{0})$ and any inlet condition of the form \eqref{a8} satisfying \eqref{a13} and \eqref{a10}, the boundary problem \eqref{a1} and \eqref{a8} admits a unique time--periodic solution $(\rho_{-}^{(\mathcal{T})},u_{-}^{(\mathcal{T})},p_{-}^{(\mathcal{T})})(t,x)$ on the extended region $(t,x) \in \mathbb{R}_{+}\times[0,\tilde{x}+\delta]$ for some $\delta>0$ satisfying
	\begin{align}
		&\rho_{-}^{(\mathcal{T})}(t+\mathcal{T},x)=\rho_{-}^{(\mathcal{T})}(t,x), ~~ u_{-}^{(\mathcal{T})}(t+\mathcal{T},x)=u_{-}^{(\mathcal{T})}(t,x),
		~~ p_{-}^{(\mathcal{T})}(t+\mathcal{T},x)=p_{-}^{(\mathcal{T})}(t,x),\label{c1}\\
		&\|\rho_{-}^{(\mathcal{T})}(t,x)-\tilde{\rho}_{-}(x)\|_{C^{1}}+\|u_{-}^{(\mathcal{T})}(t,x)-\tilde{u}_{-}(x)\|_{C^{1}}
		+\|p_{-}^{(\mathcal{T})}(t,x)-\tilde{p}_{-}(x)\|_{C^{1}}<C_{l}\epsilon.\label{c2}
	\end{align}
\end{lemma}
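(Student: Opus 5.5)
The periodic solution in the supersonic region will be obtained by treating \eqref{a1}, \eqref{a8} as a boundary value problem in which $x$ plays the role of the evolution variable. Since $\tilde u_->\tilde c_-$ on $[0,\tilde x+\delta]$, all three eigenvalues $\lambda_1<\lambda_2<\lambda_3$ of \eqref{equivSys} stay positive and bounded away from zero for small perturbations. We can therefore rewrite \eqref{equivSys} as
\[
\partial_x\bar\Upsilon+\bar A^{-1}\partial_t\bar\Upsilon=\bar A^{-1}\bar f .
\]
This is a strictly hyperbolic system in the ``time'' $x$. The full Cauchy data $\bar\Upsilon(t,0)=\bar\Upsilon_{b-}(t)$ are given by \eqref{inletBoundary}, and they are $\mathcal T$-periodic in the ``space'' variable $t\in\mathbb R$ by \eqref{boundaryPeriodicity}. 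Taking data on the whole line $t\in\mathbb R$, which is legitimate by periodic extension, there is no boundary in $t$ at all. The problem becomes a pure Cauchy problem on the strip $\mathbb R_t\times[0,\tilde x+\delta]$.

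The first step is local $C^1$ existence and uniqueness for this Cauchy problem, by the classical theory of quasilinear hyperbolic systems (Li Ta-tsien's semi-global $C^1$ solutions). This gives a solution on $[0,\tilde x+\delta]$, because the length in $x$ is fixed and finite and the data are $C^1$-small. The quantitative content is a uniform a priori estimate
\[
\|\bar\Upsilon\|_{C^1(\mathbb R\times[0,\tilde x+\delta])}\le C\|\bar\Upsilon_{b-}\|_{C^1}\le CC_b\epsilon .
\]
To prove it, integrate each diagonalized equation \eqref{waveDe} along its characteristic. In the $x$-parametrization these are $dt/dx=1/\lambda_i$, and they reach $x=0$ in finite $x$-distance. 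The linear source terms have coefficients bounded by $C_g\mathcal E$, so Gronwall over the finite interval $[0,\tilde x+\delta]$ gives a factor $e^{C(\tilde x+\delta)}$. The quadratic terms are absorbed by a continuity/bootstrap argument once $\epsilon<\epsilon_0$ is small.

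$C^1$ bounds for $\partial_t\bar\Upsilon$ follow by differentiating the system in $t$ and repeating the characteristic argument; this works because the coefficients do not depend explicitly on $t$. $\partial_x\bar\Upsilon$ is then recovered from the equation itself. Transforming back through \eqref{inverVariable} yields \eqref{c2}.

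Periodicity \eqref{c1} follows from uniqueness. Both $\bar\Upsilon(t+\mathcal T,x)$ and $\bar\Upsilon(t,x)$ solve the same system, since it is autonomous in $t$. They also share the same data at $x=0$ by \eqref{boundaryPeriodicity}, so they coincide by uniqueness of $C^1$ solutions to the Cauchy problem, which follows from domain-of-dependence energy or characteristic estimates. The solution of the initial--boundary problem for $t\ge0$ with compatible initial data $(\rho_-^{(\mathcal T)},u_-^{(\mathcal T)},p_-^{(\mathcal T)})(0,x)$ is then exactly the restriction to $\mathbb R_+$.

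I expect the main obstacle to be extending the solution up to the full interval $[0,\tilde x+\delta]$ without loss. The semi-global theory gives existence only while the $C^1$ norm stays small. Closing the bootstrap therefore requires the exponential growth constant $e^{C(\tilde x+\delta)}$, which may be large, to be compensated by the smallness of $\epsilon_0$. This is acceptable here because the length is fixed, so $C_l$ may depend on $L$, $m_a$, $M_a$ and the background solution. One also checks that the characteristic speeds remain positive, i.e. the flow stays supersonic, which again follows from $C^1$ smallness and $\tilde u_->\tilde c_-$ on the extended interval.
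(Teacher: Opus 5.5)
Your proposal is correct and follows the same route the paper sketches. In the supersonic region all three characteristic speeds are positive, so you exchange the roles of $t$ and $x$, extend the inlet data \eqref{inletBoundary} periodically to $t\in\mathbb{R}$, and solve the resulting Cauchy problem on $\mathbb{R}_t\times[0,\tilde x+\delta]$ by the semi-global classical theory and the characteristic method; periodicity, and uniqueness among small periodic solutions after periodic extension, both follow from uniqueness for that Cauchy problem, and your write-up actually supplies more of the estimates than the paper, which omits them.
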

The proof is based on the existence of semi--global classical solutions for hyperbolic conservation laws. We omit the details here. Indeed, to avoid the loss of derivative count, one can rewrite the system in form of wave decomposition \eqref{waveDe}. Next, one can exchange the roles of $t$ and $x$, and periodically extend the boundary conditions \eqref{inletBoundary}. In this way, the problem is transformed into a Cauchy problem for a hyperbolic conservation law. Then, the lemma can be proved by the classical characteristic method and iteration argument.

\section{Structural Dissipation of the Shock Boundary}\label{sectionDissi}
In this section, we investigate the shock boundary condition and its structural dissipation to the flows in the subsonic region.
\subsection{Shock boundary conditions}\label{boundsec}
In this subsection, we derive the shock boundary conditions for the new set of variables. It is known that in the subsonic domain, the 1--wave $\Upsilon_{1}$ reaches the shock boundary from the right, while the 2--wave $\Upsilon_{2}$ and the 3--wave $\Upsilon_{3}$ are generated at the shock. This mechanism can be determined by three equations of the R--H condition
\begin{equation}\label{RH}
	\left \{
	\begin{aligned}
		&[\rho]v = [\rho u],  \\
		&[\rho u]v = \left[\rho u^{2} + p\right],   \\
		&[\rho E]v  = \left[\rho E u + pu\right],
	\end{aligned}
	\right . 
\end{equation}
Here and below we denote the left/right limits of some function $f$ at the position of the background shock $\tilde{x}$ by $f_{l} = \tilde{f}(\tilde{x}-)$, $f_{r} = \tilde{f}(\tilde{x}+)$ the jump across the shock by $[f] = f_{r} - f_{l}$ and the shock speed by $v$.
\par One can eliminate the variable $v$ and use the Lax entropy condition
\begin{equation}\label{lax}
	u_{r} < u_{l}, \qquad \rho_{l} < \rho_{r} <\dfrac{\gamma+1}{\gamma-1} {\rho}_{l}
\end{equation}
to get
\begin{equation}\label{RHcon}
	\left \{
	\begin{aligned}
		&u_{r} - u_{l} = - \sqrt{p_{r} -p_{l}}\dfrac{\sqrt{\rho_{r} - \rho_{l}}}{\sqrt{\rho_{r}\rho_{l}}},  \\
		&p_{r} - p_{l} = \dfrac{-\gamma p_{l}\left(\rho_{r} - \rho_{l}\right)}{\frac{\gamma -1}{2}\rho_{r} -\frac{\gamma +1}{2}\rho_{l}} = \dfrac{\gamma p_{r}\left(\rho_{r} - \rho_{l}\right)}{\frac{\gamma +1}{2}\rho_{r} -\frac{\gamma -1}{2}\rho_{l}}.
	\end{aligned}
	\right . 
\end{equation}
For a fixed background solution and known left states $(\rho_{l}, u_{l}, p_{l})$, the two equations \eqref{RHcon} have three unknown variables $(\rho_{r}, u_{r}, p_{r})$, equivalently the perturbations $(\bar{\rho}_{r}, \bar{u}_{r}, \bar{p}_{r})$ or $\left(\bar{\Upsilon}_{1}, \bar{\Upsilon}_{2}, \bar{\Upsilon}_{3}\right)$. We shall determine the implicit function $\left(\bar{\Upsilon}_{2}, \bar{\Upsilon}_{3}\right) = \left(\bar{\Upsilon}_{2}, \bar{\Upsilon}_{3}\right)\left(x, \bar{\Upsilon}_{1}\right)$. To this end, we rewrite \eqref{RHcon} as
\begin{equation}\label{rh1}
	\left \{
	\begin{aligned}
		&0 = H_{1}\left(\rho_{l}, u_{l}, p_{l}, \rho_{r}, u_{r}, p_{r}\right) := \sqrt{p_{r} - p_{l}}\sqrt{\rho_{r}-\rho_{l}} + \sqrt{\rho_{r}\rho_{l}}\left(u_{r} - u_{l}\right),  \\
		&0 = H_{2}\left(\rho_{l}, u_{l}, p_{l}, \rho_{r}, u_{r}, p_{r}\right) := \left(\frac{\gamma -1}{2}\rho_{r} -\frac{\gamma +1}{2}\rho_{l}\right)\left(p_{r} - p_{l} \right) + \gamma p_{l}\left(\rho_{r} - \rho_{l}\right).
	\end{aligned}
	\right . 
\end{equation}
Using the change of variables, one can write
\begin{equation}\label{rh2}
	\left \{
	\begin{aligned}
		&0 = G_1\left(x,\bar{\Upsilon}, \bar{\rho}_{l}, \bar{u}_{l}, \bar{p}_{l}\right) := H_{1}\left(\rho_{l}, u_{l}, p_{l}, \rho_{r}, u_{r}, p_{r}\right) ,  \\
		&0 = G_2\left(x,\bar{\Upsilon}, \bar{\rho}_{l}, \bar{u}_{l}, \bar{p}_{l}\right) := H_{2}\left(\rho_{l}, u_{l}, p_{l}, \rho_{r}, u_{r}, p_{r}\right).
	\end{aligned}
	\right . 
\end{equation}
To apply the implicit function theorem, the goal is to verify that the Jacobi matrix
\begin{equation}\label{jacobi1}
	\dfrac{\partial\left(G_{1},G_{2}\right)}{\partial\left(\bar{\Upsilon}_{2}, \bar{\Upsilon}_{3}\right)}\bigg|_{\bar{\Upsilon}=0} = \dfrac{\partial\left(H_{1},H_{2}\right)}{\partial\left(\rho_{r}, u_{r}, p_{r}\right)}\bigg|_{U=\tilde{U}}\cdot\dfrac{\partial\left(\rho_{r}, u_{r}, p_{r}\right)}{\partial\left(\bar{\Upsilon}_{2}, \bar{\Upsilon}_{3}\right)}\bigg|_{\bar{\Upsilon}=0}
\end{equation}
is invertible. 
\par From \eqref{rh1}, \eqref{rh2} and \eqref{RHcon}, one has
\begin{equation}\label{devi}
	\begin{split}
		\dfrac{\partial\left(H_{1},H_{2}\right)}{\partial\left(\rho_{r}, u_{r}, p_{r}\right)}\bigg|_{U=\tilde{U}} 
		&= 
		\begin{pmatrix}
			\dfrac{\sqrt{\tilde{p}_{r} - \tilde{p}_{l}}}{2\sqrt{\tilde{\rho}_{r} - \tilde{\rho}_{l}}} + \left(\tilde{u}_{r} - \tilde{u}_{l}\right)\dfrac{\sqrt{\tilde{\rho}_{l}}}{2\sqrt{\tilde{\rho}_{r}}} & \sqrt{\tilde{\rho}_{r}\tilde{\rho}_{l}} & \dfrac{\sqrt{\tilde{\rho}_{r} - \tilde{\rho}_{l}}}{2\sqrt{\tilde{p}_{r} - \tilde{p}_{l}}} \\[2ex]
			\dfrac{\gamma -1}{2}\left(\tilde{p}_{r} - \tilde{p}_{l}\right) + \gamma \tilde{p}_{l} & 0 & \dfrac{\gamma -1}{2}\tilde{\rho}_{r} -\dfrac{\gamma +1}{2}\tilde{\rho}_{l}
		\end{pmatrix}
		\\
		&=
		\begin{pmatrix}
			\dfrac{\tilde{\rho}_{l}\sqrt{\tilde{p}_{r} - \tilde{p}_{l}}}{2\tilde{\rho}_{r}\sqrt{\tilde{\rho}_{r} - \tilde{\rho}_{l}}} & \sqrt{\tilde{\rho}_{r}\tilde{\rho}_{l}} & \dfrac{\sqrt{\tilde{\rho}_{r} - \tilde{\rho}_{l}}}{2\sqrt{\tilde{p}_{r} - \tilde{p}_{l}}} \\[2ex]
			\dfrac{\tilde{p}_{r} - \tilde{p}_{l}}{\tilde{\rho}_{r} - \tilde{\rho}_{l}}\tilde{\rho}_{l} & 0 & -\gamma \tilde{p}_{l}\dfrac{\tilde{\rho}_{r} - \tilde{\rho}_{l}}{\tilde{p}_{r} - \tilde{p}_{l}}
		\end{pmatrix}
		.
	\end{split}
\end{equation}
Thus, combining \eqref{inverVariable}, \eqref{jacobi1} and \eqref{devi}, one can see
\begin{equation}
	\dfrac{\partial\left(G_{1},G_{2}\right)}{\partial\left(\bar{\Upsilon}_{2}, \bar{\Upsilon}_{3}\right)}\bigg|_{\bar{\Upsilon}=0} =
	\begin{pmatrix}
		-\dfrac{\tilde{\rho}_{l}}{2\gamma}V & \dfrac{\tilde{\rho}_{l}}{4\gamma\sqrt{\tilde{p}_{r}}}V + \dfrac{\sqrt{\tilde{\rho}_{l}}}{2\sqrt{\gamma }} + \dfrac{\sqrt{\tilde{p}_{r}}}{4V} \\[2ex]
		-\dfrac{\tilde{\rho}_{l}\tilde{\rho}_{r}}{\gamma}V^{2} & \dfrac{\tilde{\rho}_{l}\tilde{\rho}_{r}}{2\gamma\sqrt{\tilde{p}_{r}}}V^{2} - \dfrac{\gamma \tilde{p}_{l}\sqrt{\tilde{p}_{r}}}{2V^{2}}
	\end{pmatrix}
	,
\end{equation}
where
\begin{eqnarray*}
	V = \dfrac{\sqrt{\tilde{p}_{r}-\tilde{p}_{l}}}{\sqrt{\tilde{\rho}_{r}-\tilde{\rho}_{l}}}.
\end{eqnarray*}
With the help of the second equation of \eqref{RHcon}, the determinant of this matrix reads
\begin{equation}\label{det23}
	\text{det} \left(\dfrac{\partial\left(G_{1},G_{2}\right)}{\partial\left(\bar{\Upsilon}_{2}, \bar{\Upsilon}_{3}\right)}\bigg|_{\bar{\Upsilon}=0}\right) = \dfrac{\tilde{\rho}_{l}V^{2}}{4\gamma\sqrt{\gamma}}\left[\left(\frac{\gamma +1}{2}\tilde{\rho}_{l} +\frac{3-\gamma }{2}\tilde{\rho}_{r}\right)\sqrt{\frac{\gamma +1}{2}\tilde{\rho}_{r} -\frac{\gamma -1}{2}\tilde{\rho}_{l}}+2\tilde{\rho}_{r}\sqrt{\tilde{\rho}_{l}}\right].
\end{equation}
Using \eqref{lax}, one can see
\begin{equation}
	\text{det} \left(\dfrac{\partial\left(G_{1},G_{2}\right)}{\partial\left(\bar{\Upsilon}_{2}, \bar{\Upsilon}_{3}\right)}\bigg|_{\bar{\Upsilon}=0}\right) > 0.
\end{equation}
Thus by the implicit function theorem, one can write the shock boundary condition as
\begin{equation}\label{bounrdayshock}
	\bar{\Upsilon}_{2}\left(t, x\right) = \mathscr{A}_{2}\left(x, \bar{\Upsilon}_{1}, \bar{\rho}_{l}, \bar{u}_{l}, \bar{p}_{l} \right), \qquad
	\bar{\Upsilon}_{3}\left(t, x\right) = \mathscr{A}_{3}\left(x, \bar{\Upsilon}_{1}, \bar{\rho}_{l}, \bar{u}_{l}, \bar{p}_{l} \right)
\end{equation}
where $\chi(t)$ denotes the shock position, $\mathscr{A}_{2}, \mathscr{A}_{2}$ are regarded as the implicit functions determined by \eqref{rh2}.
\par Analogous to \eqref{det23}, one has
\begin{equation}
	\nonumber
	\text{det} \left(\dfrac{\partial\left(G_{1},G_{2}\right)}{\partial\left(\bar{\Upsilon}_{2}, \bar{\Upsilon}_{1}\right)}\bigg|_{\bar{\Upsilon}=0}\right) = \dfrac{\tilde{\rho}_{l}V^{2}}{4\gamma\sqrt{\gamma}}\left[\left(\frac{\gamma +1}{2}\tilde{\rho}_{l} +\frac{3 -\gamma }{2}\tilde{\rho}_{r}\right)\sqrt{\frac{\gamma +1}{2}\tilde{\rho}_{r} -\frac{\gamma -1}{2}\tilde{\rho}_{l}}-2\tilde{\rho}_{r}\sqrt{\tilde{\rho}_{l}}\right].
\end{equation}
Thus, by implicit function theorem, one can derive
\begin{equation}\label{reflecPart}
	\begin{split}
		\dfrac{\partial \mathscr{A}_{3}}{\partial\bar{\Upsilon}_{1}}\left(\tilde{x},0,0,0,0\right) &= -\text{det} \left(\dfrac{\partial\left(G_{1},G_{2}\right)}{\partial\left(\bar{\Upsilon}_{2}, \bar{\Upsilon}_{1}\right)}\bigg|_{\bar{\Upsilon}=0}\right)\text{det} \left(\dfrac{\partial\left(G_{1},G_{2}\right)}{\partial\left(\bar{\Upsilon}_{2}, \bar{\Upsilon}_{3}\right)}\bigg|_{\bar{\Upsilon}=0}\right)^{-1} \\
		&= -\dfrac{\left(\frac{\gamma +1}{2}\tilde{\rho}_{l} +\frac{3-\gamma}{2}\tilde{\rho}_{r}\right)\sqrt{\frac{\gamma +1}{2}\tilde{\rho}_{r} -\frac{\gamma -1}{2}\tilde{\rho}_{l}}-2\tilde{\rho}_{r}\sqrt{\tilde{\rho}_{l}}}{\left(\frac{\gamma +1}{2}\tilde{\rho}_{l} +\frac{3 -\gamma}{2}\tilde{\rho}_{r}\right)\sqrt{\frac{\gamma +1}{2}\tilde{\rho}_{r} -\frac{\gamma -1}{2}\tilde{\rho}_{l}}+2\tilde{\rho}_{r}\sqrt{\tilde{\rho}_{l}}}.
	\end{split}
\end{equation}
Similarly, one can also derive 
\begin{equation}\label{nonecess}
	\dfrac{\partial \mathscr{A}_{2}}{\partial\bar{\Upsilon}_{1}}\left(\tilde{x},0,0,0,0\right) = \dfrac{\left(\gamma^{2} -1\right)\left(\tilde{\rho}_{r} - \tilde{\rho}_{l}\right)^{2}}{2\sqrt{\tilde{\rho}_{l}\tilde{p}_{r}}\left(\left(\frac{\gamma +1}{2}\tilde{\rho}_{l} +\frac{3 -\gamma}{2}\tilde{\rho}_{r}\right)\sqrt{\frac{\gamma +1}{2}\tilde{\rho}_{r} -\frac{\gamma -1}{2}\tilde{\rho}_{l}}+2\tilde{\rho}_{r}\sqrt{\tilde{\rho}_{l}}\right)}.
\end{equation}

\subsection{Structural dissipation}\label{disssec}
In this subsection, we verify the shock structural dissipation (\cref{dissipationCon}). We will estimate the magnitudes of the 3--waves generated at the shock by Hadamard's formula as
\begin{equation}\label{esti1}
	\bar{\Upsilon}_{3} \sim \left|\frac{\partial\mathscr{A}_{3}}{\partial x}\right|\Vert\chi(t) - \tilde{x}\Vert + \left|\frac{\partial\mathscr{A}_{3}}{\partial\bar{\Upsilon}_{1}}\right|\Vert\bar{\Upsilon}_{1}\Vert + \text{smaller supersonic perturbations}.
\end{equation}
The two main terms are the resonance term and the reflection term. The coefficient of the reflection term is given by \eqref{reflecPart}. To calculate the resonance term, we again consider the R--H conditions for steady shocks
\begin{equation}\label{rhBack}
	\left \{
	\begin{aligned}
		&\rho_{r}u_{r} = \rho_{l}u_{l} =: m,  \\
		&\rho_{r}u_{r}^{2} + p_{r} = \rho_{l}u_{l}^{2} + p_{l},	\\
		&\frac{1}{2}\rho_{r}u_{r}^{3} + \frac{\gamma}{\gamma-1}p_{r}u_{r} = \frac{1}{2}\rho_{l}u_{l}^{3} + \frac{\gamma}{\gamma-1}p_{l}u_{l}.
	\end{aligned}
	\right . 
\end{equation}
Here and throughout the rest of this subsection, we discuss a fixed background solution. For the sake of simplicity, we omit all the value notations $|_{\bar{\Upsilon}=0}$ or $\left(\tilde{x},0,0,0,0\right)$ and the tilde notations.
\par First of all, we present some crucial substitution formulas that will be used in the subsequent derivations.
\begin{lemma}
	For a given steady transonic shock solution $\left(\rho, u, p\right)$, which satisfying R--H conditions \eqref{rhBack} at the shock, the following equations hold.
	\begin{align}
		\rho_{r}u_{r} = \rho_{l}u_{l} =: m, \label{rhFormu1}\\
		V := \frac{\sqrt{p_{r}-p_{l}}}{\sqrt{\rho_{r}-\rho_{l}}} = \frac{m}{\sqrt{\rho_{r}\rho_{l}}}, \label{rhFormu2}\\
		p_{r}-p_{l} = -m\left(u_{r}-u_{l}\right) = \frac{\rho_{r}-\rho_{l}}{\rho_{r}\rho_{l}}m^{2}, \label{rhFormu3}\\
		mu_{l}-\gamma p_{l} = -\left(mu_{r}-\gamma p_{r}\right), \label{rhFormu4}\\
		m\left(u_{r}-u_{l}\right) = \frac{2}{\gamma+1}\left(mu_{r} - \gamma p_{r}\right). \label{rhFormu5}
	\end{align}
\end{lemma}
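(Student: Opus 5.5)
The plan is to derive all five identities directly from the steady Rankine--Hugoniot relations \eqref{rhBack}, in the order listed, reusing each identity in the next. Identity \eqref{rhFormu1} is just the first equation of \eqref{rhBack}, where $m>0$ is the mass flux.

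For \eqref{rhFormu3}, rewrite the momentum equation as $p_r-p_l=\rho_lu_l^2-\rho_ru_r^2=m(u_l-u_r)$, which is the first equality. For the second, substitute $u_r=m/\rho_r$ and $u_l=m/\rho_l$. This gives
\[
u_l-u_r=m\,\frac{\rho_r-\rho_l}{\rho_r\rho_l},
\]
and hence $p_r-p_l=m^2(\rho_r-\rho_l)/(\rho_r\rho_l)$. Identity \eqref{rhFormu2} then follows by dividing by $\rho_r-\rho_l$, which is positive by \eqref{lax}, and taking the positive square root, since $m>0$.

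For \eqref{rhFormu4} and \eqref{rhFormu5} I would use the energy equation. Dividing it by $m$ gives Bernoulli's relation
\[
\tfrac12u_r^2+\tfrac{\gamma}{\gamma-1}\tfrac{p_r}{\rho_r}=\tfrac12u_l^2+\tfrac{\gamma}{\gamma-1}\tfrac{p_l}{\rho_l}.
\]
Writing $p/\rho=pu/m$ and multiplying by $m$, this becomes $\tfrac{m}{2}(u_r^2-u_l^2)=-\tfrac{\gamma}{\gamma-1}(p_ru_r-p_lu_l)$. I would combine it with the momentum relation in the form $p_r-p_l=-m(u_r-u_l)$. Dividing the energy relation by $u_r-u_l\neq0$ is awkward because of the $p_ru_r-p_lu_l$ term. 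Instead, I would use the classical consequence of the two relations, the Prandtl relation, which in this notation I expect to read $mu_l+mu_r=\tfrac{2\gamma}{\gamma+1}\cdot(\ldots)$.

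A cleaner route is to note that $P:=p+mu$ is the same on both sides (the momentum equation). Substituting $p_\ast=P-mu_\ast$ into the energy relation makes it quadratic in $u$, with $u_l$ and $u_r$ as its two roots. Explicitly,
\[
\tfrac{m}{2}u^2+\tfrac{\gamma}{\gamma-1}(P-mu)u=\mathrm{const}
\quad\Longleftrightarrow\quad
-\tfrac{\gamma+1}{2(\gamma-1)}mu^2+\tfrac{\gamma}{\gamma-1}Pu=\mathrm{const}.
\]
The sum of the roots is $u_l+u_r=\tfrac{2\gamma}{\gamma+1}P/m$.

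Then, with $P=p_r+mu_r=p_l+mu_l$:
\[
(mu_l-\gamma p_l)+(mu_r-\gamma p_r)=m(u_l+u_r)-\gamma(2P-m(u_l+u_r))=(1+\gamma)m(u_l+u_r)-2\gamma P,
\]
which vanishes by the root sum. This proves \eqref{rhFormu4}.

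Finally, for \eqref{rhFormu5}, note that $mu_r-\gamma p_r=mu_r-\gamma(P-mu_r)=(\gamma+1)mu_r-\gamma P$. Substituting $\gamma P=\tfrac{\gamma+1}{2}m(u_l+u_r)$ gives $mu_r-\gamma p_r=\tfrac{\gamma+1}{2}m(u_r-u_l)$, which is \eqref{rhFormu5}.

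The only delicate step is justifying the root-sum argument, that is, that the quadratic really has $u_l\neq u_r$ as its two distinct roots. This holds because $u_r<u_l$ by \eqref{lax}. All other steps are direct algebra.
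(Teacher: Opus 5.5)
Your proof is correct and follows essentially the same route as the paper. For \eqref{rhFormu2}--\eqref{rhFormu3}, the paper combines the momentum equation with the first relation of \eqref{RHcon}, whereas you substitute $u=m/\rho$ directly, which is slightly more self-contained. For \eqref{rhFormu4}--\eqref{rhFormu5}, the paper reads them off the explicit solution \eqref{rhSolve}, whose velocity formula $u_l=\frac{\gamma-1}{\gamma+1}u_r+\frac{2\gamma p_r}{(\gamma+1)m}$ is exactly your root-sum identity $u_l+u_r=\frac{2\gamma P}{(\gamma+1)m}$ with $P=p_r+mu_r$. Your argument also makes explicit the step the paper leaves as ``one can deduce'', including discarding the trivial root $u_l=u_r$ via \eqref{lax}.
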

\begin{proof}
	\eqref{rhFormu1} is the first momentum equation of R--H conditions \eqref{rhBack}, which indicates that the momentum density across the shock wave remains unchanged. 
	\par By \eqref{RHcon} and the second equation of \eqref{rhBack}, one can see
	\begin{eqnarray*}
		p_{r} - p_{l} = -m\left(u_{r} - u_{l}\right) = m\sqrt{p_{r} -p_{l}}\dfrac{\sqrt{\rho_{r} - \rho_{l}}}{\sqrt{\rho_{r}\rho_{l}}}, 
	\end{eqnarray*}
	which yields \eqref{rhFormu2} and \eqref{rhFormu3}.
	\par Moreover, one can deduce from \eqref{rhBack} that
	\begin{equation}\label{rhSolve}
			\rho_{l} = \frac{\left(\gamma+1\right)m^{2}}{2\gamma p_{r}+\left(\gamma-1\right)mu_{r}}, \quad u_{l} = \frac{1}{\gamma+1}\left((\gamma-1)u_{r}+2\gamma\frac{p_{r}}{m}\right),	\quad p_{l} =\frac{2mu_{r}-\left(\gamma-1\right)p_{r}}{\gamma+1}.
	\end{equation}
	One can directly get \eqref{rhFormu4} and \eqref{rhFormu5} from \eqref{rhSolve}.
\end{proof}
Now we are ready to calculate
\begin{equation}
	\dfrac{\partial \mathscr{A}_{3}}{\partial x} = -\text{det} \left(\dfrac{\partial\left(G_{1},G_{2}\right)}{\partial\left(\bar{\Upsilon}_{2}, x\right)}\right)\text{det} \left(\dfrac{\partial\left(G_{1},G_{2}\right)}{\partial\left(\bar{\Upsilon}_{2}, \bar{\Upsilon}_{3}\right)}\right)^{-1}.
\end{equation}
Analogous to \eqref{devi}, one can deduce
\begin{equation}\label{devi2}
	\dfrac{\partial\left(H_{1},H_{2}\right)}{\partial\left(\rho_{l}, u_{l}, p_{l}\right)} = \begin{pmatrix}
		- \dfrac{\rho_{r}\sqrt{p_{r} - p_{l}}}{2\rho_{l}\sqrt{\rho_{r} - \rho_{l}}} & - \sqrt{\rho_{r}\rho_{l}} & - \dfrac{\sqrt{\rho_{r} - \rho_{l}}}{2\sqrt{p_{r} - p_{l}}} \\[2ex]
		-\dfrac{p_{r} - p_{l}}{\rho_{r} - \rho_{l}}\rho_{r} & 0 & \gamma p_{r}\dfrac{\rho_{r} - \rho_{l}}{p_{r} - p_{l}}
	\end{pmatrix}
	.
\end{equation}
Therefore combining \eqref{devi}, \eqref{devi2} and \eqref{b2}, one has
\begin{multline}
	\frac{\partial G_{1}}{\partial x} \left(-\frac{a^{\prime}(x)}{a(x)}\right)^{-1} = \frac{V}{2}\left(\frac{\rho_{l}u_{r}^{2}}{u_{r}^{2}-c_{r}^{2}}-\frac{\rho_{r}u_{l}^{2}}{u_{l}^{2}-c_{l}^{2}}\right) - \sqrt{\rho_{r}\rho_{l}}\left(\frac{c_{r}^{2}u_{r}}{u_{r}^{2}-c_{r}^{2}} - \frac{c_{l}^{2}u_{l}}{u_{l}^{2}-c_{l}^{2}}\right) \\
	+ \frac{1}{2V}\left(\frac{c_{r}^{2}\rho_{r}u_{r}^{2}}{u_{r}^{2}-c_{r}^{2}} - \frac{c_{l}^{2}\rho_{l}u_{l}^{2}}{u_{l}^{2}-c_{l}^{2}}\right).
\end{multline}
Using \eqref{rhFormu2} and \eqref{rhFormu3}, one can see  
\begin{equation}
	\begin{split}
		\frac{\partial G_{1}}{\partial x} \left(-\frac{a^{\prime}(x)}{a(x)}\right)^{-1} &= \frac{1}{2V}\left(\frac{\rho_{r}u_{r}^{2}u_{r}^{2}}{u_{r}^{2}-c_{r}^{2}}-\frac{\rho_{l}u_{l}^{2}u_{l}^{2}}{u_{l}^{2}-c_{l}^{2}}\right) - \frac{1}{2V}\left(\frac{\rho_{r}u_{r}^{2}c_{r}^{2}}{u_{r}^{2}-c_{r}^{2}}-\frac{\rho_{l}u_{l}^{2}c_{l}^{2}}{u_{l}^{2}-c_{l}^{2}}\right) \\
		&= \frac{1}{2V}\left(\rho_{r}u_{r}^{2} - \rho_{l}u_{l}^{2}\right) = -\frac{p_{r}-p_{l}}{2V}.
	\end{split}
\end{equation}
\par On the other hand, noticing \eqref{rhFormu2} and \eqref{RHcon}, one has
\begin{equation}
	\begin{split}
		\frac{\partial G_{2}}{\partial x} \left(-\frac{a^{\prime}(x)}{a(x)}\right)^{-1} &= V^{2}\rho_{r}\rho_{l}\left(\frac{u_{r}^{2}}{u_{r}^{2}-c_{r}^{2}}-\frac{u_{l}^{2}}{u_{l}^{2}-c_{l}^{2}}\right)- \frac{\gamma}{V^{2}}\left(p_{l}\frac{c_{r}^{2}\rho_{r}u_{r}^{2}}{u_{r}^{2}-c_{r}^{2}} - p_{r}\frac{c_{l}^{2}\rho_{l}u_{l}^{2}}{u_{l}^{2}-c_{l}^{2}}\right) \\
		&= m^{2}\left(\frac{u_{r}^{2}-c_{r}^{2}}{u_{r}^{2}-c_{r}^{2}}-\frac{u_{l}^{2}-c_{l}^{2}}{u_{l}^{2}-c_{l}^{2}}\right)+\frac{\gamma+1}{2}\left(\rho_{r}-\rho_{l}\right)\left(\frac{c_{r}^{2}\rho_{r}u_{r}^{2}}{u_{r}^{2}-c_{r}^{2}} + \frac{c_{l}^{2}\rho_{l}u_{l}^{2}}{u_{l}^{2}-c_{l}^{2}}\right) \\
		&=-\frac{\gamma-1}{2}m\left(\rho_{r}-\rho_{l}\right)\left(u_{r}+u_{l}\right).
	\end{split}
\end{equation}
Here \eqref{rhFormu4}, \eqref{rhBack} and \eqref{rhFormu5} are used for the last two equals. So far, one has obtained
\begin{equation}
	\dfrac{\partial\left(G_{1},G_{2}\right)}{\partial\left(\bar{\Upsilon}_{2}, x\right)} =
	\begin{pmatrix}
		-\dfrac{\rho_{l}}{2\gamma}V & \dfrac{a^{\prime}(x)}{a(x)}\dfrac{p_{r}-p_{l}}{2V} \\
		-\dfrac{\rho_{r}\rho_{l}}{\gamma}V^{2} & \dfrac{a^{\prime}(x)}{a(x)}\dfrac{\gamma-1}{2}m\left(\rho_{r}-\rho_{l}\right)\left(u_{r}+u_{l}\right)
	\end{pmatrix}
\end{equation}
Therefore, one has 
\begin{equation}
	\begin{split}
		\text{det} \left(\dfrac{\partial\left(G_{1},G_{2}\right)}{\partial\left(\bar{\Upsilon}_{2}, x\right)}\right)\left(-\dfrac{a^{\prime}(x)}{a(x)}\right)^{-1} &= \dfrac{\gamma -1}{4\gamma}\rho_{l}Vm\left(\rho_{r}-\rho_{l}\right)\left(u_{r}+u_{l}\right)-\dfrac{1}{2\gamma}\rho_{r}\rho_{l}V\left(p_{r}-p_{l}\right)\\
		&= \dfrac{1}{2\gamma}Vm\left(\rho_{r}-\rho_{l}\right)\left(\dfrac{\gamma-1}{2}\rho_{l}u_{r}-\dfrac{3-\gamma}{2}m\right).
	\end{split}
\end{equation}
Thus,
\begin{equation}\label{pA3px}
	\dfrac{\partial \mathscr{A}_{3}}{\partial x} = \dfrac{a^{\prime}(x)}{a(x)}\frac{\sqrt{\gamma}\sqrt{\rho_{r}\rho_{l}}\left(\rho_{r}-\rho_{l}\right)\left(\left(\gamma-1\right)u_{r}-\left(3-\gamma\right)u_{l}\right)}{\left(\frac{\gamma +1}{2}\rho_{l} +\frac{3 -\gamma}{2}\rho_{r}\right)\sqrt{\frac{\gamma +1}{2}\rho_{r} -\frac{\gamma -1}{2}\rho_{l}}+2\rho_{r}\sqrt{\rho_{l}}}.
\end{equation}
\par Next, we focus on the offset of the shock position $|\Vert\chi(t) - \tilde{x}\Vert$. Again, from the R--H conditions, one can derive the shock position satisfies some ODE
\begin{equation}\label{shockODE}
	\chi^{\prime}(t) = \mathcal{F}\left(x, \bar{\Upsilon}, \bar{\rho}_{l}, \bar{u}_{l}, \bar{p}_{l}\right).
\end{equation}
In particular, one can choose
\begin{equation}\label{speedEx}
	\chi^{\prime}(t) = u_{l}-\sqrt{\dfrac{\rho_{r}}{\rho_{l}}}\sqrt{\dfrac{p_{r}-p_{l}}{\rho_{r}-\rho_{l}}} \equiv: F\left(U_{r},U_{l}\right).
\end{equation}
Applying the change of variables, one can define
\begin{equation}
	\mathcal{F}\left(x, \bar{\Upsilon}, \bar{\rho}_{l}, \bar{u}_{l}, \bar{p}_{l}\right) = F\left(U_{r},U_{l}\right)
\end{equation}
such that \eqref{shockODE} holds. 
\begin{remark}
	In fact, we have infinitely many expressions of $\mathcal{F}$ from the R--H conditions.
\end{remark}
We will estimate the amplitude of the shock position by \eqref{bb19}:
\begin{equation}\label{estiODE}
	\Vert\chi(t) - \tilde{x}\Vert \sim \dfrac{1}{\left|\frac{\partial \mathcal{F}}{\partial x}\right|}\sum_{i=1}^{3}\left|\frac{\partial \mathcal{F}}{\partial \bar{\Upsilon}_{i}}\right|\Vert\bar{\Upsilon}_{i}\Vert+ \text{supersonic perturbation terms}.
\end{equation}
Using \eqref{speedEx}, one has
\begin{equation}
	\nonumber
	\frac{\partial \mathcal{F}}{\partial x} = u_{l}^{\prime} - V\left(\dfrac{\rho_{r}^{\prime}}{2\sqrt{\rho_{r}\rho_{l}}}- \dfrac{\rho_{l}^{\prime}}{2\rho_{l}\sqrt{\rho_{l}}}\right) -\sqrt{\dfrac{\rho_{r}}{\rho_{l}}}\left[\dfrac{p_{r}^{\prime}-p_{l}^{\prime}}{2\sqrt{p_{r}-p_{l}}\sqrt{\rho_{r}-\rho_{l}}}-\dfrac{\left(\rho_{r}^{\prime}-\rho_{l}^{\prime}\right)\sqrt{p_{r}-p_{l}}}{2\sqrt{\rho_{r}-\rho_{l}}\left(\rho_{r}-\rho_{l}\right)}\right]
\end{equation}
Applying \eqref{b2}, one can get
\begin{equation}
	\nonumber
	\frac{\partial \mathcal{F}}{\partial x}\left(-\dfrac{a^{\prime}(x)}{a(x)}\right)^{-1} = \dfrac{-mc_{l}^{2}}{mu_{l}-\gamma p_{l}} + \dfrac{u_{r}u_{l}^{2}\left(\rho_{r}+\rho_{l}\right)}{2\left(mu_{l}-\gamma p_{l}\right)} - \dfrac{2u_{l}m^{2}-\rho_{r}m\left(c_{r}^{2}+c_{l}^{2}\right)}{2\left(\rho_{r}-\rho_{l}\right)\left(mu_{l}-\gamma p_{l}\right)}.
\end{equation}
Then \eqref{rhFormu1}--\eqref{rhFormu4} yield
\begin{equation}\label{pFpx}
	\frac{\partial \mathcal{F}}{\partial x} = -\dfrac{a^{\prime}(x)}{a(x)}\dfrac{u_{l}}{2}<0.
\end{equation}
This, combining the obvious fact $\mathcal{F}(\tilde{x},\mathbf{0}) = 0$, enables one to apply \Cref{lemmaODE1} and \Cref{lemmaODE2} to the ODE \eqref{shockODE}. Moreover, with \eqref{smallCondition} and \eqref{staConst}, one can see
\begin{equation}\label{Fxsmall}
	\underline{C}\mathcal{E} \le \left|\frac{\partial \mathcal{F}}{\partial x}\right| \le \bar{C}\mathcal{E},
\end{equation}
for some constants $\underline{C}$ and $\bar{C}$.
\par On the other hand, one has
\begin{eqnarray*}
	\frac{\partial \mathcal{F}}{\partial \bar{\Upsilon}_{1}} = \frac{\partial \mathcal{F}}{\partial \rho_{r}}\cdot\frac{\partial \rho_{r}}{\partial \bar{\Upsilon}_{1}} + \frac{\partial \mathcal{F}}{\partial p_{r}}\cdot\frac{\partial p_{r}}{\partial \bar{\Upsilon}_{1}}.
\end{eqnarray*}
Direct calculation yields
\begin{equation}
	\frac{\partial \mathcal{F}}{\partial \rho_{r}} = \dfrac{V}{2\sqrt{\rho_{r}\rho_{l}}}\cdot\dfrac{\rho_{l}}{\rho_{r}-\rho_{l}}, \qquad
	\frac{\partial \mathcal{F}}{\partial p_{r}} = -\sqrt{\dfrac{\rho_{r}}{\rho_{l}}}\cdot\dfrac{1}{2V\left(\rho_{r}-\rho_{l}\right)}.
\end{equation}
Combining \eqref{inverVariable}, one can get
\begin{equation}\label{pFpU1}
	\frac{\partial \mathcal{F}}{\partial \bar{\Upsilon}_{1}} = - \dfrac{\left(\gamma+1\right)u_{l}}{8\gamma \sqrt{p_{r}}}.
\end{equation}
Similarly, 
\begin{equation}\label{pFpU3}
	\frac{\partial \mathcal{F}}{\partial \bar{\Upsilon}_{2}}= -\dfrac{m}{2\gamma\left(\rho_{r}-\rho_{l}\right)},\quad\frac{\partial \mathcal{F}}{\partial \bar{\Upsilon}_{3}} = - \dfrac{\left(\gamma+1\right)u_{l}}{8\gamma \sqrt{p_{r}}}.
\end{equation}
\par In the view of \eqref{esti1} and \eqref{estiODE}, our goal is that the coefficient carries dissipation, i.e.
\begin{equation}\label{dissipationCon}
	\tilde{\theta}_{d} := \left|\frac{\partial\mathscr{A}_{3}}{\partial x}\right|\cdot\left|\frac{\partial \mathcal{F}}{\partial x}\right|^{-1}\cdot\left(\left|\frac{\partial \mathcal{F}}{\partial \bar{\Upsilon}_{1}}\right|+ \left|\frac{\partial \mathcal{F}}{\partial \bar{\Upsilon}_{3}}\right|\right) +  \left|\frac{\partial\mathscr{A}_{3}}{\partial\bar{\Upsilon}_{1}}\right| < 1.
\end{equation}
One can observe that the dissipation \eqref{dissipationCon} is not necessarily valid. Since one can let $\gamma \to 1^{+}$, \eqref{dissipationCon} yields
	\begin{equation}
		\sqrt{2} - 1 < M_{r} := \dfrac{u_{r}}{c_{r}} < 1.
	\end{equation}
	This is consistent with the restriction in \cite{ZHANG}. However, \eqref{dissipationCon} is indeed valid when $\gamma$ is larger. We have 
\begin{lemma}\label{lemmaDissipa}
	There exits a critical $\gamma_{c} \in \left(1, \frac{5}{3}\right)$, such that:
	\begin{enumerate}
		\item[a.] If $\gamma \in \left(\gamma_{c},3\right)$, the shock boundary is dissipative for any steady state, i.e., \eqref{dissipationCon} holds.
		\item[b.] If $\gamma \in \left(1, \gamma_{c}\right]$, the shock boundary is dissipative if the background transonic shock solution satisfying
		\begin{equation}\label{conditionM}
			\tilde{M}_{c} < M_{r} < 1,
		\end{equation}
		for some constant $\tilde{M}_{c} = \tilde{M}_{c}(\gamma)$.
	\end{enumerate}
\end{lemma}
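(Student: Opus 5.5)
The plan is to reduce \eqref{dissipationCon} to an explicit inequality in two scalar parameters, $\gamma$ and the strength of the background shock, and then study that inequality. Nothing in $\tilde{\theta}_d$ depends on the nozzle, since the factor $\frac{a'(x)}{a(x)}$ appears in both \eqref{pA3px} and \eqref{pFpx} and cancels.

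\textbf{Step 1: express $\tilde{\theta}_d$ in closed form.} Substituting \eqref{reflecPart}, \eqref{pA3px}, \eqref{pFpx}, \eqref{pFpU1} and \eqref{pFpU3} into \eqref{dissipationCon} gives
\begin{equation*}
\tilde{\theta}_d=\frac{(\gamma+1)\sqrt{\rho_r\rho_l}\,(\rho_r-\rho_l)\,\big|(\gamma-1)u_r-(3-\gamma)u_l\big|}{2\sqrt{\gamma}\,\sqrt{p_r}\,D}+\frac{|N-2\rho_r\sqrt{\rho_l}|}{N+2\rho_r\sqrt{\rho_l}},
\end{equation*}
where
\begin{equation*}
N=\Big(\tfrac{\gamma+1}{2}\rho_l+\tfrac{3-\gamma}{2}\rho_r\Big)\sqrt{\tfrac{\gamma+1}{2}\rho_r-\tfrac{\gamma-1}{2}\rho_l},\qquad D=N+2\rho_r\sqrt{\rho_l}.
\end{equation*}
This expression is invariant under the scalings $\rho\mapsto\lambda\rho$ and $(u,\sqrt{p})\mapsto\mu(u,\sqrt{p})$. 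Using that invariance, I will normalize $\rho_l=1$ and set $r=\rho_r\in\big(1,\frac{\gamma+1}{\gamma-1}\big)$, as allowed by \eqref{lax}.

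Next I express $u_r=u_l/r$ and $p_r$ in terms of $r$ and $u_l$ through \eqref{rhSolve} and \eqref{rhFormu3}. Then $u_l/\sqrt{p_r}$, and hence $\tilde{\theta}_d$, becomes an explicit algebraic function $\Theta(\gamma,r)$. Equivalently, via the normal-shock relation $r=\frac{(\gamma+1)M_l^2}{(\gamma-1)M_l^2+2}$, $\Theta$ can be written as a function of $M_r\in\big(\sqrt{\frac{\gamma-1}{2\gamma}},1\big)$, and $r$ is monotone in $M_r$.

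\textbf{Step 2: behaviour at the endpoints of the $r$-range.}
\begin{itemize}
\item In the weak-shock limit $r\to1^+$ ($M_r\to1$), the resonance term vanishes because of the factor $(\rho_r-\rho_l)$. For the reflection term, $N\to 2$ and $2\rho_r\sqrt{\rho_l}\to 2$, so it also tends to $0$. Hence $\Theta\to0<1$ for every $\gamma$.
\item In the strong-shock limit $r\to\frac{\gamma+1}{\gamma-1}$, I will compute $\Theta$ explicitly as a function of $\gamma$ alone. As a consistency check, the limit $\gamma\to1^+$ should reproduce the threshold $M_r=\sqrt2-1$ noted in the text.
\end{itemize}

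\textbf{Step 3: the key computation.} I will show that for each fixed $\gamma$, the set $\{r:\Theta(\gamma,r)<1\}$ is an interval $(1,r_c(\gamma))$, with $r_c(\gamma)=\frac{\gamma+1}{\gamma-1}$ (the whole admissible range) precisely when $\gamma>\gamma_c$. To do this I will clear the square root by the substitution $s=\sqrt{\frac{\gamma+1}{2}r-\frac{\gamma-1}{2}}$. I will also split according to the signs of $(\gamma-1)u_r-(3-\gamma)u_l$, which equals $u_l\big(\frac{\gamma-1}{r}-(3-\gamma)\big)$, and of $N-2r$. In each sign region, $\Theta<1$ becomes a polynomial inequality $P_\gamma(s)>0$, and I would analyse its roots in $s$.

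The threshold is then defined by $\gamma_c:=\inf\{\gamma:\sup_r\Theta(\gamma,r)<1\}$. For $\gamma\le\gamma_c$, I take $\tilde{M}_c(\gamma)$ to be the value of $M_r$ corresponding to the smallest root $r_c(\gamma)>1$ of $\Theta(\gamma,\cdot)=1$. Since $M_r$ decreases as $r$ increases, the dissipative range $r<r_c$ is exactly $M_r>\tilde{M}_c$, which gives \eqref{conditionM}. It remains to check numerically or by a sign argument that $\gamma_c<\frac53$, for instance by verifying $\sup_r\Theta(\frac53,r)<1$ via the polynomial.

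\textbf{Main obstacle.} The hard part is proving that the bad set is monotone in $\gamma$, so that there is a single threshold $\gamma_c$, and that the good set in $r$ is connected. I would first try to show $\partial_\gamma\Theta<0$ on the bad region, after rewriting $\Theta$ in the variables $(\gamma,M_r)$. If that fails, I would fall back on a discriminant/Sturm-type analysis of $P_\gamma$ in $s$, together with a rigorous interval-arithmetic check on compact $\gamma$-ranges.
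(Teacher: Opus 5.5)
Your Step 1 is correct and coincides with the paper's reduction. Your closed form for $\tilde\theta_d$ is exactly what \eqref{reflecPart}, \eqref{pA3px}, \eqref{pFpx}, \eqref{pFpU1}, \eqref{pFpU3} give. Your variable $s=\sqrt{\frac{\gamma+1}{2}r-\frac{\gamma-1}{2}}$ is just $1/M_r$ by the normal-shock relations, so your polynomial in $s$ would be the paper's quartic in reversed form, and your split on the sign of $N-2\rho_r\sqrt{\rho_l}$ is the paper's Case 1/Case 2.

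The gap is that the proposal stops where the proof begins. Step 3 is only announced, and the obstacle you name (connectedness of the good set, dependence on $\gamma$) is left open. In particular, checking $\sup_r\Theta(\tfrac{5}{3},r)<1$ says nothing about $\gamma\in(\tfrac{5}{3},3)$ without a monotonicity in $\gamma$ that you do not have. So part a, which is the real content of the lemma, is unproved.

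Three ideas, as in the paper, close it.

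(i) Where $N<2\rho_r\sqrt{\rho_l}$, the condition $\tilde\theta_d<1$ reads (resonance term) $<2N/D$. After normalization this is \eqref{disCon2}, which holds for every $R>1$ factor by factor, since $(\gamma+1)R-(\gamma-1)>(\gamma+1)(R-1)$ and $\gamma+1+(3-\gamma)R>|(\gamma-1)-(3-\gamma)R|$. So no root analysis is needed there. Your split on the sign of $(\gamma-1)u_r-(3-\gamma)u_l$ is also superfluous: that quantity is positive only if $\gamma>2$ and $R<\frac{\gamma-1}{3-\gamma}$, which lies inside this region.

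(ii) Where $N>2\rho_r\sqrt{\rho_l}$, the denominator $D$ cancels. This leaves \eqref{dissipationCon1}, i.e.\ $P_\gamma(M_r)<0$ with
\[
P_\gamma(x)=(\gamma-1)^2x^4-2(\gamma-1)x^3-(\gamma^2-3\gamma+4)x^2-4x+3-\gamma .
\]
An elementary bound gives $P_\gamma'<0$ on $(0,1)$ for every $\gamma\in(1,3)$. Together with (i), the good set is therefore automatically an interval $(\tilde M_c,1)$ in $M_r$, and the worst case is the strong-shock endpoint. Everything reduces to the sign of the one-variable function $g(\gamma)=P_\gamma\big(\sqrt{(\gamma-1)/(2\gamma)}\big)$.

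(iii) Monotonicity in $\gamma$ is not needed at all; your definition $\gamma_c=\inf\{\gamma:\sup_r\Theta<1\}$ is what creates it. Part b follows from your weak-shock limit $\Theta\to0$ as $r\to1^+$ and continuity alone, with no connectedness required. For part a, take $\gamma_c=\sup\{\gamma:g(\gamma)>0\}$. One has $g(1^+)=2$, while on $[\tfrac{5}{3},3)$ termwise bounds give $g\le\frac49-\frac{7}{20}-\frac{4}{\sqrt5}+\frac43<0$. Hence $\gamma_c\in(1,\tfrac{5}{3})$ and $g\le0$ on $(\gamma_c,3)$.

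Two minor slips:
\begin{enumerate}
\item $\tilde\theta_d$ is not invariant under $\rho\mapsto\lambda\rho$ alone; the resonance term scales like $\lambda^{1/2}$. The R--H-compatible invariance is $(\rho,p)\mapsto\lambda(\rho,p)$, which is what your normalization actually uses.
\item Your $\gamma\to1^+$ consistency check concerns the root of $\Theta=1$, not the strong-shock endpoint, where $M_r\to0$. Indeed $P_1(x)=-2x^2-4x+2$ vanishes at $x=\sqrt2-1$.
\end{enumerate}
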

\begin{proof}
	\begin{description}[leftmargin=0cm]
		\item[\textbf{Case 1}] First, we discuss the case when 
		\begin{equation}\label{caseOne}
			\left(\frac{\gamma +1}{2}\rho_{l} +\frac{3 -\gamma}{2}\rho_{r}\right)\sqrt{\frac{\gamma +1}{2}\rho_{r} -\frac{\gamma -1}{2}\rho_{l}}-2\rho_{r}\sqrt{\rho_{l}} >0.
		\end{equation}
		Denoting the compression ratio by $R=\frac{\rho_{r}}{\rho_{l}}\in \left(1, \frac{\gamma+1}{\gamma-1}\right)$, one can see \eqref{caseOne} is always valid if $\gamma \in \left(1, \frac{5}{3}\right)$. In fact, \eqref{caseOne} is not valid if and only if 
		\begin{eqnarray*}
			\min \{1, \dfrac{\gamma^{2} -1}{\left(3-\gamma\right)^{2}}\} < R < \max \{1, \dfrac{\gamma^{2} -1}{\left(3-\gamma\right)^{2}}\}.
		\end{eqnarray*}
		On the other hand, $1< \dfrac{\gamma^{2} -1}{\left(3-\gamma\right)^{2}}$ if and only if $\gamma > \frac{5}{3}$. 
		\par Substituting \eqref{reflecPart}, \eqref{pA3px}, \eqref{pFpU1}, \eqref{pFpU3}, \eqref{pFpx}, the inequality \eqref{dissipationCon} is equivalent to
		\begin{equation}\label{dissipationCon1}
			\dfrac{\gamma+1}{2}\left(u_{l} - u_{r}\right)\left[\left(3 - \gamma\right)\rho_{r} - \left(\gamma -1\right)\rho_{l}\right] < 4\rho_{r}c_{r}.
		\end{equation}
		One can directly get from \eqref{rhSolve} that
		\begin{equation}\label{postMechpre}
			M_{r}^{2} = \dfrac{\left(\gamma -1\right)M_{l}^{2}+2}{2\gamma M_{l}^{2}-\left(\gamma -1\right)},
		\end{equation}
		where $M_{l,r}$ are the Mach numbers. Thus for supersonic inflow $M_{l} > 1$, the post--shock Mach number is monotonically related to the pre--shock Mach number and
		\begin{equation}
			\dfrac{\gamma-1}{2\gamma} < M_{r}^{2} <1.
		\end{equation}
		Using \eqref{postMechpre}, \eqref{dissipationCon1} can be reduced to
		\begin{equation}
			\dfrac{3-\gamma}{M_{r}^{2}}+\left(\gamma -1\right)\dfrac{2\gamma M_{r}^{2}-\gamma +1}{\left(\gamma -1\right)M_{r}^{2}+2} - 2 < \dfrac{4}{M_{r}},
		\end{equation}
		equivalently
		\begin{equation}
			\begin{split}
				&P(x) := \left(\gamma-1\right)^{2}x^{4} - 2\left(\gamma -1\right)x^{3} - \left(\gamma^{2} -3\gamma +4\right)x^{2} - 4x +3-\gamma,\\
				&P(M_{r})<0.
			\end{split}
		\end{equation}
		One can see that $P(x)$ is monotonically decreasing as $\gamma\in \left(1,\frac{5}{3}\right)$. Therefore, $P(M_{r})<0$ if and only if $P(\sqrt{\frac{\gamma -1}{2\gamma}})<0$. Let $\gamma_{c} \in \left(0,\frac{5}{3}\right)$ satisfy $P(\sqrt{\frac{\gamma_{c} -1}{2\gamma_{c}}}) = 0$ and $M_{c}$ be the zero point of the equation $P(M_{c}) =0$. One can see the part b. of the lemma.
		\item[\textbf{Case 2}] Second, we discuss the case when
		\begin{equation}\label{caseTwo}
			\left(\frac{\gamma +1}{2}\rho_{l} +\frac{3 -\gamma}{2}\rho_{r}\right)\sqrt{\frac{\gamma +1}{2}\rho_{r} -\frac{\gamma -1}{2}\rho_{l}}-2\rho_{r}\sqrt{\rho_{l}} <0.
		\end{equation}
		Substituting \eqref{reflecPart}, \eqref{pA3px}, \eqref{pFpU1}, \eqref{pFpU3}, \eqref{pFpx}, the inequality \eqref{dissipationCon} is equivalent to
		\begin{equation}\label{dissipationCon2}
			\dfrac{\gamma+1}{2c_{r}}\sqrt{\rho_{l}}\left(u_{l} - u_{r}\right)\left|\left(\gamma -1\right)\rho_{l} -\left(3 - \gamma\right)\rho_{r}\right| < 2\left(\frac{\gamma +1}{2}\rho_{l} +\frac{3 -\gamma}{2}\rho_{r}\right)\sqrt{\frac{\gamma +1}{2}\rho_{r} -\frac{\gamma -1}{2}\rho_{l}},
		\end{equation}
		which can be rewritten as
		\begin{equation}\label{disCon2}
			\left(\gamma+1\right)\left(R-1\right)\left|\left(\gamma-1\right)- \left(3-\gamma\right)R\right| < \left[\left(\gamma+1\right)R - \left(\gamma-1\right)\right]\left[\gamma+1+\left(3-\gamma\right)R\right].
		\end{equation}
		One can easily see that \eqref{disCon2} is always valid since $R >1$.
	\end{description} 
\end{proof}
\begin{remark}
	Under the method of this paper, one can get
	\begin{equation}
		\gamma_{c} \approx 1.347.
	\end{equation}
	In particular, for ordinary air $(\gamma=1.4)$, any transonic shocks indeed have structural dissipation.
\end{remark}
\par In fact, by continuity and boundedness of the background solution, we have a stronger conclusion that there exist a constant $\theta_{d}$ depend only on $m_{a}$, $M_{a}$, $L$ and inlet data \eqref{a7} such that
\begin{eqnarray*}
	\tilde{\theta}_{d} < \theta_{d} <1.
\end{eqnarray*}
\par Furthermore, according to the ODE for Mach number, one can get
\begin{equation}
	\mathscr{M}(\tilde{M}(x)) = \frac{a(x)}{a(0)}\mathscr{M}(\tilde{M}(0)),
\end{equation} 
where 
\begin{eqnarray*}
	\mathscr{M}(M) := \frac{1}{M}\left(\frac{2}{\gamma+1}\left(1+\frac{\gamma-1}{2}M^{2}\right)\right)^{\frac{\gamma+1}{2(\gamma-1)}}.
\end{eqnarray*}
Thus, one can see that the condition \eqref{conditionM} can be guaranteed by \eqref{AA11} for some constant $M_{crit}$ depending on $\gamma$, $m_{a}$, $M_{a}$ and the inlet data \eqref{a7}.
\par For the completeness of the argument, we further explain the dissipative structure of the entropy wave. Similarly, one has
\begin{equation}
	\bar{\Upsilon}_{2} \sim \left|\frac{\partial\mathscr{A}_{2}}{\partial x}\right|\Vert\chi(t) - \tilde{x}\Vert + \left|\frac{\partial\mathscr{A}_{2}}{\partial\bar{\Upsilon}_{1}}\right|\Vert\bar{\Upsilon}_{1}\Vert + \text{smaller supersonic perturbations}.
\end{equation}
Since the entropy wave has a different physical dimension from acoustic waves, after appropriate scaling
\begin{align}
	&x' = \nu x, \quad t' = \nu^{a}t, \nonumber\\
	&\rho'(t',x') = \nu^{b}\rho(t,x), \quad u'(t',x') = \nu^{1-a}u(t,x), \quad p'(t',x') = \nu^{2-2a+b}p(t,x), \label{sc}
\end{align}
the system \eqref{a1} is invariant for any $a,b,\nu >0$. Thus, without loss of generality, we can assume that the entropy wave has a dissipative structure at the shock in our analysis.
\subsection{Reformulation of the system}
In order to make the boundary conditions have formal dissipation, we apply the following scaling transformation
\begin{equation}\label{scaling}
	\breve{\mathbf{\Upsilon}} = \left(\breve{\Upsilon}_{1}, \breve{\Upsilon}_{2}, \breve{\Upsilon}_{3}\right)^{\top} := \left(\alpha_{1}\bar{\Upsilon}_{1}, \alpha_{2}\bar{\Upsilon}_{2}, \alpha_{3}\bar{\Upsilon}_{3}\right)^{\top},
\end{equation}
where we set that $\alpha_{1}=1$, and that $\alpha_{2}$, $\alpha_{3}$ are some constants satisfying
\begin{equation}\label{beta}
	\alpha_{3}>1, \qquad \alpha_{3}\theta_{d}+\frac{\alpha_{3}}{\alpha_{2}}<1.
\end{equation}
\par Now, we conclude that in the subsonic domain, the problem takes the following form
\begin{align}
	\partial_{t}\breve{\Upsilon}_{1}+\lambda_{1}\partial_{x}\breve{\Upsilon}_{1} &= g_{1}(x, \breve{\mathbf{\Upsilon}}) - \frac{l_{12}}{\alpha_{2}}\left(\partial_{t}\breve{\Upsilon}_{2}+\lambda_{1}\partial_{x}\breve{\Upsilon}_{2}\right) - \frac{l_{13}}{\alpha_{3}}\left(\partial_{t}\breve{\Upsilon}_{3}+\lambda_{1}\partial_{x}\breve{\Upsilon}_{3}\right), \\
	\partial_{t}\breve{\Upsilon}_{2}+\lambda_{2}\partial_{x}\breve{\Upsilon}_{2} &= 0, \\
	\partial_{t}\breve{\Upsilon}_{3}+\lambda_{3}\partial_{x}\breve{\Upsilon}_{3} &= \alpha_{3}g_{3}(x, \breve{\mathbf{\Upsilon}}) - \frac{\alpha_{3} l_{32}}{\alpha_{2}}\left(\partial_{t}\breve{\Upsilon}_{2}+\lambda_{3}\partial_{x}\breve{\Upsilon}_{2}\right) - \alpha_{3} l_{31}\left(\partial_{t}\breve{\Upsilon}_{1}+\lambda_{3}\partial_{x}\breve{\Upsilon}_{1}\right), \label{system3}
\end{align}
where
\begin{align}
	\nonumber
	g_{1}(x, \breve{\mathbf{\Upsilon}}) = -\frac{a^{\prime}(x)}{a(x)}\left(f_{1,L}+f_{1,NL}+l_{13}f_{3,L}+l_{13}f_{3,NL}\right), \\
	\nonumber
	g_{3}(x, \breve{\mathbf{\Upsilon}}) = -\frac{a^{\prime}(x)}{a(x)}\left(f_{3,L}+f_{3,NL}+l_{31}f_{1,L}+l_{31}f_{1,NL}\right),
\end{align}
with shock boundary and outlet boundary conditions
\begin{align}
	\breve{\Upsilon}_{2}\left(t, \chi(t)\right) &=  \alpha_{2}\mathscr{A}_{2}\left(\chi(t), \breve{\Upsilon}_{1}(t,\chi(t)), \bar{\rho}_{l}(t,\chi(t)), \bar{u}_{l}(t,\chi(t)), \bar{p}_{l}(t,\chi(t)) \right), \label{shockBoundary2}\\
	\breve{\Upsilon}_{3}\left(t, \chi(t)\right) &=  \alpha_{3}\mathscr{A}_{3}\left(\chi(t), \breve{\Upsilon}_{1}(t,\chi(t)), \bar{\rho}_{l}(t,\chi(t)), \bar{u}_{l}(t,\chi(t)), \bar{p}_{l}(t,\chi(t)) \right), \label{shockBoundary3}\\
	\breve{\Upsilon}_{1}\left(t, L\right) &= -\frac{1}{\alpha_{3}}\breve{\Upsilon}_{3}\left(t, L\right) + \bar{\Upsilon}_{1,b+}(t). \label{outletBoundary1}
\end{align}
In addition, the shock position satisfies the ODE
\begin{equation}\label{shockposition}
	\chi^{\prime}(t) = \mathcal{F}\left(\chi(t), \left(\breve{\Upsilon}_{1}, \frac{1}{\alpha_{2}}\breve{\Upsilon}_{2}, \frac{1}{\alpha_{3}}\breve{\Upsilon}_{3}\right)(t,\chi(t)), \bar{\rho}_{l}(t,\chi(t)), \bar{u}_{l}(t,\chi(t)), \bar{p}_{l}(t,\chi(t))\right).
\end{equation}
\section{Existence of Time--periodic Transonic Shock Solutions}\label{s3}
In this section, we prove our main \Cref{t1}.
\subsection{A fractional--step iteration scheme}
We start with building a fractional--step iteration scheme to construct a sequence of approximate solutions in the subsonic region. To set up, let
\begin{align}\label{start}
\breve{\mathbf{\Upsilon}}^{(0)}(t,x)=\mathbf{0}=(0,0,0)^{\top},\quad\chi^{(0)}(t)\equiv\tilde{x}.
\end{align}
For any $k\in\mathbb{N}_{+}$, suppose $\left(\breve{\mathbf{\Upsilon}}^{(k-1)}, \chi^{(k-1)}(t)\right)$ is given, we first construct $\breve{\mathbf{\Upsilon}}^{(k)}$ as the solution to the linearized system
\begin{align}
	\partial_{t}\breve{\Upsilon}_{1}^{(k)}+\lambda_{1}^{(k-1)}\partial_{x}\breve{\Upsilon}_{1}^{(k)} &= g_{1}(x, \breve{\mathbf{\Upsilon}}^{(k-1)}) - \sum_{i=2,3}\frac{l_{1i}^{(k-1)}}{\alpha_{i}}\left(\partial_{t}\breve{\Upsilon}_{i}^{(k-1)}+\lambda_{1}^{(k-1)}\partial_{x}\breve{\Upsilon}_{i}^{(k-1)}\right),\label{liearSys1}\\
	\partial_{t}\breve{\Upsilon}_{2}^{(k)}+\lambda_{2}^{(k-1)}\partial_{x}\breve{\Upsilon}_{2}^{(k)} &= 0, \label{liearSys2}\\
	\partial_{t}\breve{\Upsilon}_{3}^{(k)}+\lambda_{3}^{(k-1)}\partial_{x}\breve{\Upsilon}_{3}^{(k)} &= \alpha_{3}g_{3}(x, \breve{\mathbf{\Upsilon}}^{(k-1)}) - \sum_{i=1,2}\frac{\alpha_{3}l_{3i}^{(k-1)}}{\alpha_{i}}\left(\partial_{t}\breve{\Upsilon}_{i}^{(k-1)}+\lambda_{3}^{(k-1)}\partial_{x}\breve{\Upsilon}_{i}^{(k-1)}\right), \label{liearSys3}
\end{align}
with the linearized boundary condition
\begin{align}
&\breve{\Upsilon}_{2}^{(k)}(t,\chi^{(k-1)}(t))=\mathscr{A}_{2}^{(k-1)}\left[t,\chi^{(k-1)}(t)\right],\quad
\breve{\Upsilon}_{3}^{(k)}(t,\chi^{(k-1)}(t))=\mathscr{A}_{3}^{(k-1)}\left[t,\chi^{(k-1)}(t)\right],\label{c9}\\
&\breve{\Upsilon}_{1}^{(k)}(t,L)= -\frac{1}{\alpha_{3}}\breve{\Upsilon}_{3}^{(k-1)}\left(t, L\right) + \bar{\Upsilon}_{1,b+}(t).\label{c7}
\end{align}
Here we use the notation
\begin{eqnarray*}
	\lambda_{i}^{(k-1)} := \lambda_{i}(x, \breve{\mathbf{\Upsilon}}^{(k-1)}), \quad l_{ij}^{(k-1)} := l_{ij}(x, \breve{\mathbf{\Upsilon}}^{(k-1)}), \quad i,j =1,2,3,
\end{eqnarray*}
and
\begin{multline}
	\nonumber
	\mathscr{A}_{2}^{(k-1)}\left[t,\chi^{(k-1)}(t)\right] \\
	:= \alpha_{2}\mathscr{A}_{2}\left(\chi^{(k-1)}(t), \breve{\Upsilon}_{1}^{(k-1)}(t,\chi^{(k-1)}(t)), \bar{\rho}_{l}(t,\chi^{(k-1)}(t)), \bar{u}_{l}(t,\chi^{(k-1)}(t)), \bar{p}_{l}(t,\chi^{(k-1)}(t)) \right),
\end{multline}
\begin{multline}
	\nonumber
	\mathscr{A}_{3}^{(k-1)}\left[t,\chi^{(k-1)}(t)\right] \\
	:= \alpha_{3}\mathscr{A}_{3}\left(\chi^{(k-1)}(t), \breve{\Upsilon}_{1}^{(k-1)}(t,\chi^{(k-1)}(t)), \bar{\rho}_{l}(t,\chi^{(k-1)}(t)), \bar{u}_{l}(t,\chi^{(k-1)}(t)), \bar{p}_{l}(t,\chi^{(k-1)}(t)) \right).
\end{multline}
For sake of simplification, we also use notations
\begin{eqnarray*}
	\frac{\partial \mathscr{A}^{(k-1)}}{\partial x}\left[t,\chi^{(k-1)}(t)\right], \frac{\partial \mathscr{A}^{(k-1)}}{\partial \bar{\Upsilon}_{1}}\left[t,\chi^{(k-1)}(t)\right], \\
	\frac{\partial \mathscr{A}^{(k-1)}}{\partial \bar{\rho}}\left[t,\chi^{(k-1)}(t)\right], \frac{\partial \mathscr{A}^{(k-1)}}{\partial \bar{u}}\left[t,\chi^{(k-1)}(t)\right], \frac{\partial \mathscr{A}^{(k-1)}}{\partial \bar{p}}\left[t,\chi^{(k-1)}(t)\right]
\end{eqnarray*}
to denote the corresponding derivatives on the shock boundary.
\par The shock boundary \eqref{c9} is associated with the perturbation on the left of the shock $\left(\bar{\rho}_{l}, \bar{u}_{l}, \bar{p}_{l}\right)$, given by \Cref{existSuper}.
Denoting
$$\bar{\rho}_{l}(t,x) = \rho_{-}^{(\mathcal{T})}(t,x)-\tilde{\rho}_-(x),~ \bar{u}_{l}(t,x) = u_{-}^{(\mathcal{T})}(t,x)-\tilde{u}_-(x),~ \bar{p}_{l}(t,x) = p_{-}^{(\mathcal{T})}(t,x)-\tilde{p}_-(x),$$
one has
\begin{align}
\|\bar{\rho}_{l}\|_{C^{1}}+\|\bar{u}_{l}\|_{C^{1}}+\|\bar{p}_{l}\|_{C^{1}}
<C_{l}\epsilon. \label{c10}
\end{align}
\par We claim that for any $k \in \mathbb{N}_{+}$, the solution to \eqref{liearSys1}--\eqref{c7} exists in a uniform existence region $(t,x)\in\mathbb{R}\times[\tilde{x}-\delta,L]$ for small enough $\delta$. In fact, first, one can extend the boundary conditions periodically to $t\in \mathbb{R}$. Then, for linearized systems and decoupled boundary conditions, one can easily write the classical solution by method of characteristics. Additionally, the characteristics emanating from $\chi^{(k-1)}(t)$ can develop both forward and backward with respect to $x$.
\par Second, with such $\breve{\mathbf{\Upsilon}}^{(k)}$ and on the basis of \Cref{lemmaODE1} and \eqref{pFpx}, we construct the approximation shock position $\chi^{(k)}(t)$ as the $\mathcal{T}$--periodic solution of the ODE
\begin{align}\label{c11}
\frac{d\chi^{(k)}(t)}{dt}=\mathcal{F}^{(k)}\left[t,\chi^{(k)}(t)\right],
\end{align}
where we use the notation
\begin{multline}
\nonumber
\mathcal{F}^{(k)}\left[t,\chi^{(k)}(t)\right]\\
:= \mathcal{F}\left(\chi^{(k)}(t), \left(\breve{\Upsilon}_{1}^{(k)}, \frac{1}{\alpha_{2}}\breve{\Upsilon}_{2}^{(k)}, \frac{1}{\alpha_{3}}\breve{\Upsilon}_{3}^{(k)}\right)(t,\chi^{(k)}(t)), \bar{\rho}_{l}(t,\chi^{(k)}(t)), \bar{u}_{l}(t,\chi^{(k)}(t)), \bar{p}_{l}(t,\chi^{(k)}(t))\right).
\end{multline}
\par In the rest part of this section, we are devoted to prove the following proposition.
\begin{proposition}\label{propo}
	For the iteration scheme \eqref{liearSys1}--\eqref{c7} and \eqref{c11} with \eqref{boundarySmall}--\eqref{boundaryPeriodicity} and \eqref{c10}, under the assumptions~\eqref{a11}-\eqref{AA11}, there exist $\theta\in (0,1)$, small enough $\epsilon_1>0$ and big enough $C_{P}, C_{\mathcal{F}},C_{M}>0$ such that for any given $\epsilon\in(0,\epsilon_1)$, the approximate sequence $\breve{\mathbf{\Upsilon}}^{(k)}, \chi^{(k)}$ satisfies that for any $k\in\mathbb{N}_{+}$:
	\begin{align}
		&\breve{\mathbf{\Upsilon}}^{(k)}(t+\mathcal{T},x)=\breve{\mathbf{\Upsilon}}^{(k)}(t,x),
		~~\chi^{(k)}(t+\mathcal{T})=\chi^{(k)}(t), \quad \forall (t,x)\in \mathbb{R}\times[\tilde{x}-\delta,L],\label{appPeriodic}\\
		&\|\breve{\mathbf{\Upsilon}}^{(k)}\|_{C^{1}}:= \max_{i=1,2,3}\left\{\|\breve{\Upsilon}_{i}^{(k)}\|, \|\partial_{t}\breve{\Upsilon}_{i}^{(k)}\|, \|\partial_{x}\breve{\Upsilon}_{i}^{(k)}\| \right\}<C_{P}\epsilon,\label{C1estima}\\
		&\|\chi^{(k)}-\tilde{x}\|_{C^{1}} := \max\left\{\|\chi^{(k)}-\tilde{x}\|, \|{\chi^{(k)}}^{\prime}\|\right\}<C_{P}\epsilon\mathcal{E}^{-1},\label{appShockesti} \\
		&\|\breve{\mathbf{\Upsilon}}^{(k)}-\breve{\mathbf{\Upsilon}}^{(k-1)}\|\leq C_{P}\epsilon\theta^{k-1},\label{cauchyEstima1}\\
		&\|\chi^{(k)}-\chi^{(k-1)}\|_{C^{1}}\leq C_{\mathcal{F}}\epsilon\mathcal{E}^{-1}\theta^{k-1},\label{cauchyEstima2}
	\end{align}
	and
	\begin{align}
		\max\limits_{i=1,2,3}\left\{\varpi(\eta|\partial_{t}\breve{\Upsilon}_{i}^{(k)}(\cdot,\cdot))
		+\varpi(\eta|\partial_{x}\breve{\Upsilon}_{i}^{(k)}(\cdot,\cdot))\right\}<C_{M}\Phi(\eta). \label{modulus}
	\end{align}
	Here $\varpi$ is the modulus of continuity defined by
	\begin{eqnarray*}
		\varpi(\eta|h(\cdot,\cdot)):=\sup\limits_{\substack{|t_{1}-t_{2}|\leq\eta\\|x_{1}-x_{2}|\leq\eta }}
		\left|h(t_{1},x_{1})-h(t_{2},x_{2})\right|,
	\end{eqnarray*}
	for some function $h\in C_{t,x}^{0}$ and $\Phi(\eta)$ is a continuous function on $\eta \in (0,1)$, independent of $k$, to be determined later with
	\begin{eqnarray*}
		\lim\limits_{\eta\to 0^{+}}\Phi(\eta)=0.
	\end{eqnarray*}
\end{proposition}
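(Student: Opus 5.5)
We argue by induction on $k$. The case $k=0$ is trivial by \eqref{start}. Assume \eqref{appPeriodic}--\eqref{modulus} hold for all indices up to $k-1$. We then establish them for $k$ in three steps: first the uniform bounds, then the Cauchy (contraction) estimates, then the equicontinuity.

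\textbf{Step 1: periodicity and $C^0$ bounds.} Since the coefficients, source terms and boundary data in \eqref{liearSys1}--\eqref{c7} are built from $\mathcal{T}$-periodic quantities, the linear problem can be solved along characteristics on $\mathbb{R}\times[\tilde{x}-\delta,L]$. Uniqueness of bounded solutions of the decoupled linear transport problems then gives \eqref{appPeriodic} for $\breve{\mathbf{\Upsilon}}^{(k)}$. The shock position $\chi^{(k)}$ is the unique $\mathcal{T}$-periodic solution from \Cref{lemmaODE1}, which applies thanks to \eqref{pFpx}, provided $\epsilon$ is small relative to $\mathcal{E}$.

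For the $C^0$ bound on $\breve{\Upsilon}_3^{(k)}$ we integrate along the 3-characteristic starting from the shock. The boundary value \eqref{c9} is expanded by Hadamard's formula as in \eqref{esti1}:
\[
|\breve{\Upsilon}^{(k)}_3|\le \alpha_3\Big|\tfrac{\partial\mathscr{A}_3}{\partial x}\Big|\,\|\chi^{(k-1)}-\tilde{x}\|+\alpha_3\Big|\tfrac{\partial\mathscr{A}_3}{\partial\bar\Upsilon_1}\Big|\,\|\breve{\Upsilon}^{(k-1)}_1\|+C\epsilon+C\|\breve{\mathbf{\Upsilon}}^{(k-1)}\|^2 .
\]
We then insert the estimate \eqref{bb19} for $\chi^{(k-1)}$, in the form \eqref{estiODE}. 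This combination produces exactly $\alpha_3\tilde\theta_d<\alpha_3\theta_d$ multiplying $\max\{\|\breve\Upsilon_1\|,\|\breve\Upsilon_3\|/\alpha_3\}$. The $\breve\Upsilon_2$ contribution appears with the factor $\alpha_3/\alpha_2$, and \eqref{beta} makes the total less than $1$. Note that $|\partial_x\mathscr{A}_3|/|\partial_x\mathcal{F}|$ is $\mathcal{E}$-independent, because both carry the factor $a'/a$.

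The source terms contribute $C_g\mathcal{E}\|\breve{\mathbf{\Upsilon}}^{(k-1)}\|$ over a bounded travel time. The outlet condition \eqref{c7} gives $\breve\Upsilon_1^{(k)}$ at $L$ with factor $1/\alpha_3<1$. The $\breve\Upsilon_2$ wave is carried from the shock unchanged.

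With a weighted max-norm we thus obtain $\|\breve{\mathbf{\Upsilon}}^{(k)}\|\le \theta'\|\breve{\mathbf{\Upsilon}}^{(k-1)}\|+C\epsilon$ with $\theta'<1$. This needs $\mathcal{E}$ small, which is \eqref{a11}. It closes the $C^0$ part of \eqref{C1estima}. The bound $\|\chi^{(k)}-\tilde{x}\|\le C\epsilon\mathcal{E}^{-1}$ follows from \eqref{bb19} and \eqref{Fxsmall}, and $\|{\chi^{(k)}}'\|$ from \eqref{bb20}.

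\textbf{Step 2: derivative bounds.} For $\partial_t\breve{\Upsilon}_i^{(k)}$ we differentiate the boundary conditions in $t$. This brings in ${\chi^{(k-1)}}'$ and $\partial_t\breve\Upsilon_1^{(k-1)}$, with the same structural coefficients, so the same dissipative inequality holds for $\partial_t$. The ${\chi}''$ terms are controlled by \eqref{bb21}. We recover $\partial_x$ from $\partial_t$ through the equations, since the $\lambda_i$ are bounded away from $0$ in the subsonic region. The terms $\partial_t\breve\Upsilon_i^{(k-1)}+\lambda\partial_x\breve\Upsilon_i^{(k-1)}$ on the right-hand side are of lower order via the equations of step $k-1$ and the smallness of $l_{ij}=O(\|\breve{\mathbf{\Upsilon}}\|)$. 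The modulus estimate \eqref{modulus} is propagated along characteristics in the standard way (as in Li Ta-tsien's semi-global theory). We choose $\Phi$ built from the moduli of the boundary data and of $\bar\rho_l,\bar u_l,\bar p_l$, together with a contracting geometric sum.

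\textbf{Step 3: Cauchy estimates.} We take differences of consecutive iterates in $C^0$ only. The differences of coefficients are controlled using the uniform $C^1$ bounds from Step 2. The shock-boundary terms are handled with \Cref{lemmaODE2}, giving \eqref{cauchyEstima2} and feeding \eqref{cauchyEstima1} with the same dissipative factor $\theta<1$.

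\textbf{Main obstacle.} The hardest point is Step 1's coupling at the shock. The factor $\exp(\Psi_{\phi,0}\mathcal{T})$ in \eqref{bb33} is harmless, since $\Psi_{\phi,0}\sim\mathcal{E}$; but the resonance loop must be estimated sharply enough that the constants $(1+C_\Psi\varepsilon)$ and the Hadamard remainders do not destroy $\tilde\theta_d<\theta_d<1$ from \Cref{lemmaDissipa}.
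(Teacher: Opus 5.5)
Your periodicity argument and your $C^0$ bound for $\breve\Upsilon_3^{(k)}$ (Hadamard's formula at the shock combined with \eqref{bb19}, giving the factor $\alpha_3\theta_d+\alpha_3/\alpha_2<1$) match the paper's, but Steps 2 and 3 contain a genuine gap.

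You call the coupling terms $\frac{\alpha_3 l_{3i}^{(k-1)}}{\alpha_i}(\partial_t+\lambda_3^{(k-1)}\partial_x)\breve\Upsilon_i^{(k-1)}$, $i=1,2$, in \eqref{liearSys3} lower order ``via the equations of step $k-1$ and the smallness of $l_{ij}$''. That is true in amplitude, which is enough for the $C^0$ bound, but it is false in derivative count.
\begin{itemize}
\item To estimate $\partial_t\breve\Upsilon_3^{(k)}$ you must differentiate \eqref{liearSys3}. This produces $l_{3i}^{(k-1)}\,\partial_t(\partial_t+\lambda_3^{(k-1)}\partial_x)\breve\Upsilon_i^{(k-1)}$, a second derivative of the previous iterate that the $C^1$ induction hypothesis does not control. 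Smallness of $l_{3i}$ cannot compensate for an uncontrolled factor.
\item The equations of step $k-1$ do not help. $\breve\Upsilon_i^{(k-1)}$ is transported with speed $\lambda_i^{(k-2)}$, not $\lambda_3^{(k-1)}$. For example, $(\partial_t+\lambda_3^{(k-1)}\partial_x)\breve\Upsilon_2^{(k-1)}=(\lambda_3^{(k-1)}-\lambda_2^{(k-2)})\partial_x\breve\Upsilon_2^{(k-1)}$ is still a full first derivative with an $O(1)$ coefficient.
\item The same defect breaks your Step 3. The difference equation contains $l_{3i}^{(k-2)}(\partial_t+\lambda_3^{(k-1)}\partial_x)(\breve\Upsilon_i^{(k-1)}-\breve\Upsilon_i^{(k-2)})$, which is not bounded by $\|\breve{\mathbf{\Upsilon}}^{(k-1)}-\breve{\mathbf{\Upsilon}}^{(k-2)}\|$. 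Estimating it with $C^1$ norms gives only $C\epsilon^{2}$, not $C\epsilon\theta^{k-1}$, so the contraction does not close.
\end{itemize}

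The missing idea is that these terms are derivatives along the very 3-characteristic you integrate on, so they must be integrated by parts along it.
\begin{itemize}
\item The paper applies $D_\omega^{(k)}=\partial_t+\omega_3^{(k)}\partial_x$, where $\omega_3^{(k)}$ is constant along 3-characteristics and equals ${\chi^{(k-1)}}'$ on the shock. Hence $D_\omega^{(k)}$ commutes with $\partial_t+\lambda_3^{(k-1)}\partial_x$ up to $D_3^{(k)}\lambda_3^{(k-1)}\partial_x$, and on the shock it coincides with the tangential derivative of \eqref{c9}.
\item It then writes $l_{3i}(\partial_t+\lambda_3\partial_x)D_\omega\breve\Upsilon_i^{(k-1)}=(\partial_t+\lambda_3\partial_x)(l_{3i}D_\omega\breve\Upsilon_i^{(k-1)})-[(\partial_t+\lambda_3\partial_x)l_{3i}]D_\omega\breve\Upsilon_i^{(k-1)}$; see \eqref{deriTrans}. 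Integrating along $X_3^{(k)}$ then leaves only endpoint values of first derivatives multiplied by $O(\epsilon)$.
\item The difference equation \eqref{c54} is put in the same form.
\end{itemize}

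Relatedly, \eqref{modulus} is not a routine semi-global propagation. With no initial time there are unboundedly many reflections (equivalently, unboundedly many iterations). So, as in \eqref{modulusDissipation}, the modulus of the tangential derivative must itself be contracted by $\alpha_3\theta_d$ at the shock, uniformly in $k$, with the modulus of ${\chi^{(k-1)}}'$ controlled through \eqref{bb21}. One also needs a uniform $C^1$ bound on $\omega_3^{(k)}$ to carry this into the interior. All of this again relies on the integrated form \eqref{deriTrans}.

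A smaller point: your claim that $\exp(\Psi_{\phi,0}\mathcal{T})$ is harmless because $\Psi_{\phi,0}\sim\mathcal{E}$ only works for $\mathcal{T}\lesssim\mathcal{E}^{-1}$; the paper asserts the same. For arbitrary $\mathcal{T}$ the factor is avoidable. The periodic difference $\delta=\phi_1^*-\phi_2^*$ solves $\delta'=-a(t)\delta+b(t)$ with $a\ge c\,\mathcal{E}$ when $\epsilon\ll\mathcal{E}$. Evaluating at a maximum of $|\delta|$ gives $\|\delta\|\le\|b\|/(c\,\mathcal{E})$, with no exponential.
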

Similarly to \cite{Qu}, once we show \Cref{propo}, one can directly see \Cref{t1}. In fact, first, by \eqref{cauchyEstima1} and \eqref{cauchyEstima2}, the sequence $\{\breve{\mathbf{\Upsilon}}^{(k)}\}_{k=1}^{\infty}$ and $\{\chi^{(k)}\}_{k=1}^{\infty}$ are Cauchy sequences in $C^{0}$ and $C^{1}$ spaces respectively, and thus converge to some $C^{0}$ function $\breve{\mathbf{\Upsilon}}^{(\mathcal{T})}$ and  $C^{1}$ function $\chi^{(\mathcal{T})}$, respectively. Additionally, by \eqref{appPeriodic}, the limits are $\mathcal{T}$--periodic. Moreover, by \eqref{C1estima} and \eqref{modulus}, applying the Arzel\`{a}--Ascoli theorem, there exists a subsequence of $\{\breve{\mathbf{\Upsilon}}^{(k)}\}$ that converges uniformly in $C^{1}$. By the uniqueness of the limit, one can conclude that $\{\breve{\mathbf{\Upsilon}}^{(k)}\}$ actually converges to $\breve{\mathbf{\Upsilon}}^{(\mathcal{T})}$ in $C^{1}$ space. Thus, with the shock position $\chi^{(\mathcal{T})}$, one can see that $\breve{\mathbf{\Upsilon}}^{(\mathcal{T})}|_{\Omega_{+}^{\mathcal{T}}}$ is a $C^{1}$ classical solution to the system \eqref{waveDe}--\eqref{boundaryPeriodicity} in the subsonic region defined by \eqref{regions}. Applying the variable substitution inversely, one obtains the solution of \eqref{a1} and \eqref{a5} in the subsonic region. As for the solution in the supersonic region, we take $(\rho_{-}^{(\mathcal{T})},u_{-}^{(\mathcal{T})},p_{-}^{(\mathcal{T})})|_{\Omega_{-}^{\mathcal{T}}}$, which is given by \Cref{existSuper}. Moreover, one can recover the R--H conditions, equivalently \eqref{RHcon} and \eqref{speedEx}, by passing to the limit in \eqref{c9} and \eqref{c11}, and the Lax entropy condition follows. Finally, we take the initial data just in the form of \eqref{a14}, and then the proof of \Cref{t1} is complete.
\par The proof of \Cref{propo} is quite lengthy. We will divide it into three subsections.
\subsection{Uniform $C^{1}$ boundedness and time--periodicity}\label{suu1}
\begin{proof}[Proof of \eqref{appPeriodic}--\eqref{appShockesti}]
\par For $k\ge 1$, with $\breve{\mathbf{\Upsilon}}^{(k-1)}$ and $\chi^{(k-1)}$ in hand, we will inductively prove the following
\begin{align}
	&\breve{\mathbf{\Upsilon}}^{(k)}(t+\mathcal{T},x)=\breve{\mathbf{\Upsilon}}^{(k)}(t,x),
	~~\chi^{(k)}(t+\mathcal{T})=\chi^{(k)}(t), \quad \forall (t,x)\in \mathbb{R}\times[\tilde{x}-\delta,L], \label{c17}\\
	&\|\breve{\mathbf{\Upsilon}}^{(k)}\|<C_{1}\epsilon,\quad \|\partial_{t}\breve{\mathbf{\Upsilon}}^{(k)}\|<C_{1}\epsilon, \quad \|\partial_{x}\breve{\mathbf{\Upsilon}}^{(k)}\|<C_{2}\epsilon,\label{c18}\\
	&\|\chi^{(k)}-\tilde{x}\|<C_{\mathcal{F},0}\epsilon\mathcal{E}^{-1},\quad \|{\chi^{(k)}}^{\prime}\|<C_{\mathcal{F},1}\epsilon,\label{c19}
\end{align}
under the assumption 
\begin{align}
	&\breve{\mathbf{\Upsilon}}^{(k-1)}(t+\mathcal{T},x)=\breve{\mathbf{\Upsilon}}^{(k-1)}(t,x),
	~~\chi^{(k-1)}(t+\mathcal{T})=\chi^{(k-1)}(t), \quad \forall (t,x)\in \mathbb{R}\times[\tilde{x}-\delta,L], \label{c20}\\
	&\|\breve{\mathbf{\Upsilon}}^{(k-1)}\|<C_{1}\epsilon,\quad \|\partial_{t}\breve{\mathbf{\Upsilon}}^{(k-1)}\|<C_{1}\epsilon, \quad \|\partial_{x}\breve{\mathbf{\Upsilon}}^{(k-1)}\|<C_{2}\epsilon,\label{c21}\\
	&\|\chi^{(k-1)}-\tilde{x}\|<C_{\mathcal{F},0}\epsilon\mathcal{E}^{-1},\quad \|{\chi^{(k-1)}}^{\prime}\|<C_{\mathcal{F},1}\epsilon,\label{c22}
\end{align}
where $C_{1},C_{2}, C_{\mathcal{F},0},C_{\mathcal{F},1}$ are positive constants to be determined later. Especially, in view of \eqref{c21}, we have $\breve{\mathbf{\Upsilon}}^{(k-1)} \in \mathcal{U}$ for some uniform small neighborhood $\mathcal{U}$, which guarantee that we only need to consider small perturbations. To start with, for $k=1$, \eqref{c20}--\eqref{c22} follow directly from \eqref{start}. 
\par First of all, by \eqref{c20}, one can easily see that $\breve{\mathbf{\Upsilon}}^{(k)}(t+\mathcal{T},x)$ solves equations \eqref{liearSys1}--\eqref{c7} if $\breve{\mathbf{\Upsilon}}^{(k)}(t,x)$ does. Thus, by the uniqueness to this linear system, one has 
\begin{eqnarray*}
	\breve{\mathbf{\Upsilon}}^{(k)}(t+\mathcal{T},x)=\breve{\mathbf{\Upsilon}}^{(k)}(t,x),
	\quad \forall (t,x)\in \mathbb{R}\times[\tilde{x}-\delta,L].
\end{eqnarray*}
\par Then, we present the estimates of the third wave $\breve{\Upsilon}_{3}^{(k)}$ in detail, which is the most essential part. For any fixed $(t,x) \in \mathbb{R}\times[\tilde{x}-\delta, L]$, one can define the 3--characteristics as
\begin{align}\label{characteritic}
\left\{
\begin{aligned}
&\frac{dX_{3}^{(k)}}{d\tau}(\tau;t,x)=\lambda_{3}\left(X_{3}^{(k)}(\tau;t,x),\breve{\mathbf{\Upsilon}}^{(k-1)}\left(\tau,X_{3}^{(k)}(\tau;t,x)\right)\right),\\
&X_{3}^{(k)}(t;t,x)=x,
\end{aligned}
\right.
\end{align}
and set the backward exit--time for $(t,x)$ as $\tau_{3}^{(k)}(t,x)$ by the intersection of the characteristic $X_{3}^{(k)}$ and the shock $\chi^{(k-1)}$, i.e.,
\begin{equation}\label{exitTime}
	X_{3}^{(k)}(\tau_{3}^{(k)};t,x)=\chi^{(k-1)}(\tau_{3}^{(k)}).
\end{equation}
Due to the different propagation speeds of the two, the intersection is uniquely well--defined. Then, one can integrate \eqref{liearSys3} along the characteristic curve $X_{3}^{(k)}=X_{3}^{(k)}(\tau;t,x)$ to get
\begin{multline}
	\breve{\Upsilon}_{3}^{(k)}(t,x)=\breve{\Upsilon}_{3}^{(k)}\left(\tau_{3}^{(k)},\chi^{(k-1)}(\tau_{3}^{(k)})\right) \\
	+\int_{\tau_{3}^{(k)}}^{0}\left(\alpha_{3}g_{3}(x, \breve{\mathbf{\Upsilon}}^{(k-1)}) - \sum_{i=1,2}\frac{\alpha_{3}l_{3i}^{(k-1)}}{\alpha_{i}}\left(\partial_{t}\breve{\Upsilon}_{i}^{(k-1)}+\lambda_{3}^{(k-1)}\partial_{x}\breve{\Upsilon}_{i}^{(k-1)}\right)\right)\left(\tau,X_{3}^{(k)}(\tau;t,x)\right)d\tau.
\end{multline}
By \eqref{a11}, \eqref{c9} and applying Hadamard's formula, one has
\begin{align}
\left|\breve{\Upsilon}_{3}^{(k)}(t,x)\right|
\leq&\left|\mathscr{A}_{3}^{(k-1)}\left[\tau_{3}^{(k)},\chi^{(k-1)}(\tau_{3}^{(k)})\right]\right|
+\alpha_{3}C_{L}\mathcal{E}\epsilon +CC_{1}\epsilon^{2} \notag\\
\leq& \alpha_{3}(1+C\epsilon +C\mathcal{E})\left(|\frac{\partial\mathscr{A}_{3}}{\partial x}(\tilde{x},\mathbf{0})|\|{\chi^{(k-1)}}-\tilde{x}\| +|\frac{\partial\mathscr{A}_{3}}{\partial \bar{\Upsilon}_{1}}(\tilde{x},\mathbf{0})|\|\bar{\Upsilon}_{1}^{(k-1)}\| \right.\notag\\
&\left. +|\frac{\partial\mathscr{A}_{3}}{\partial \bar{\rho}_{l}}(\tilde{x},\mathbf{0})|\|\bar{\rho}_{l}\| +|\frac{\partial\mathscr{A}_{3}}{\partial \bar{u}_{l}}(\tilde{x},\mathbf{0})|\|\bar{u}_{l}\| +|\frac{\partial\mathscr{A}_{3}}{\partial \bar{p}_{l}}(\tilde{x},\mathbf{0})|\|\bar{p}_{l}\|\right)
+C\mathcal{E}\epsilon +C\epsilon^{2}.\label{c36}
\end{align}
Here and below $C$ denotes different constants independent of $k$. Furthermore, applying \Cref{lemmaODE1}, one has
\begin{multline}\label{chiEsti}
	\|\chi^{(k-1)}-\tilde{x}\|\leq(1+C\epsilon)\frac{1}{|\frac{\partial \mathcal{F}}{\partial x}(\tilde{x},\mathbf{0})|}\left(\sum_{i=1}^{3}\dfrac{1}{\alpha_{i}}|\frac{\partial \mathcal{F}}{\partial \bar{\Upsilon}_{i}}(\tilde{x},\mathbf{0})|\|\breve{\Upsilon}_{i}^{(k-1)}\| \right. \\
	\left. +|\frac{\partial \mathcal{F}}{\partial \bar{\rho}_{l}}(\tilde{x},\mathbf{0})|\|\bar{\rho}_{l}\|
	+|\frac{\partial \mathcal{F}}{\partial \bar{u}_{l}}(\tilde{x},\mathbf{0})|\|\bar{u}_{l}\|+|\frac{\partial \mathcal{F}}{\partial \bar{p}_{l}}(\tilde{x},\mathbf{0})|\|\bar{p}_{l}\|\right).
\end{multline}
Therefore, applying the structural dissipation \eqref{dissipationCon}, one can get
\begin{equation}\label{C0estiFor3}
	\left|\breve{\Upsilon}_{3}^{(k)}(t,x)\right|
	\leq (1+C\epsilon +C\mathcal{E})\left(\alpha_{3}\theta_{d} + \frac{\alpha_{3}}{\alpha_{2}}\right)C_{1}\epsilon + CC_{l}\epsilon +C\mathcal{E}\epsilon +C\epsilon^{2}.
\end{equation}
Thus, since \eqref{beta}, for large enough $C_{1}$, one has
\begin{equation}
	\|\breve{\Upsilon}_{3}^{(k)}\| \le C_{1}\epsilon.
\end{equation}
The dissipation for $2$--waves at the shock and $1$--waves at the outlet boundary is guaranteed by scaling \eqref{sc} and $\alpha_{3} >1$ respectively. The analysis of them is similar or simpler, so we omit here and in subsequent part of this paper.
\par Next, we turn to the estimates for derivatives. We need to estimate the derivative in the region using the derivative along the boundary $\chi^{(k-1)}(t)$. To this end, we define function $\omega^{(k)}_{3}(t,x)$ as
\begin{equation}
	\omega_{3}^{(k)}\left(t,x\right)=\mathcal{F}^{(k-1)}\left(\tau_{3}^{(k)}(t,x),\chi^{(k-1)}(\tau_{3}^{(k)}(t,x))\right),
\end{equation}
which is well--defined following the definition of the backward exit--time. One can easily see that $\omega_{3}^{(k)}$ remains invariant along the characteristics, namely, it satisfies
\begin{equation}\label{omegaInvariant}
	\frac{\partial \omega_{3}^{(k)}}{\partial t} + \lambda_{3}^{(k-1)}\frac{\partial \omega_{3}^{(k)}}{\partial x} = 0.
\end{equation} Moreover, taking $t$--derivative to boundary condition \eqref{c9} yields
\begin{equation}\label{tDerivation}
	\frac{d}{dt}\breve{\Upsilon}_{3}^{(k)}(t,\chi^{(k-1)}(t)) = \left(\frac{\partial}{\partial t}+\mathcal{F}^{(k-1)}(t,\chi^{(k-1)}(t))\frac{\partial}{\partial x}\right)\breve{\Upsilon}_{3}^{(k)}(t,\chi^{(k-1)}(t)).
\end{equation}
Then, one can take the derivation operator 
\begin{equation}\label{dOmega}
	D_{\omega}^{(k)}:=\frac{\partial}{\partial t}+\omega_{3}^{(k)}(t,x)\frac{\partial}{\partial x}
\end{equation}
to the equation \eqref{liearSys3} and get
\begin{align}
	&\left(\partial_{t}+\lambda_{3}^{(k-1)}\partial_{x}\right)D_{\omega}^{(k)}\breve{\Upsilon}_{3}^{(k)}\notag\\
	=&-D_{3}^{(k)}\lambda_{3}^{(k-1)}\partial_{x}\breve{\Upsilon}_{3}^{(k)} + \alpha_{3}D_{3}^{(k)}g_{3} - \sum_{i=1,2}\frac{\alpha_{3}D_{3}^{(k)}l_{3i}^{(k-1)}}{\alpha_{i}}\left(\partial_{t}\breve{\Upsilon}_{i}^{(k-1)}+\lambda_{3}^{(k-1)}\partial_{x}\breve{\Upsilon}_{i}^{(k-1)}\right) \notag\\
	&-\sum_{i=1,2}\frac{\alpha_{3}l_{3i}^{(k-1)}}{\alpha_{i}}D_{3}^{(k)}\lambda_{3}^{(k-1)}\partial_{x}\breve{\Upsilon}_{i}^{(k-1)} - \sum_{i=1,2}\left(\partial_{t}+\lambda_{3}^{(k-1)}\partial_{x}\right)\left(\frac{\alpha_{3}l_{3i}^{(k-1)}}{\alpha_{i}}D_{\omega}^{(k)}\breve{\Upsilon}_{i}^{(k-1)}\right) \notag\\
	&+\sum_{i=1,2}\left(\left(\partial_{t}+\lambda_{3}^{(k-1)}\partial_{x}\right)\frac{\alpha_{3}l_{3i}^{(k-1)}}{\alpha_{i}}\right)\left(D_{\omega}^{(k)}\breve{\Upsilon}_{i}^{(k-1)}\right), \label{deriTrans}
\end{align}
where we use the operator notation for some function $f=f(x,\breve{\mathbf{\Upsilon}}^{(k-1)})$:
\begin{equation}\label{D3Forshort}
	D_{3}^{(k)} := \sum_{i=1}^{3}\left(\frac{\partial\breve{\Upsilon}_{i}^{(k-1)}}{\partial t}\frac{\partial}{\partial \breve{\Upsilon}_{i}} + \omega_{3}^{(k)}\frac{\partial\breve{\Upsilon}_{i}^{(k-1)}}{\partial x}\frac{\partial}{\partial \breve{\Upsilon}_{i}}\right) + \omega_{3}^{(k)}\frac{\partial}{\partial x}
\end{equation}
for simplification. Then, one can integrate \eqref{deriTrans} along the characteristic curve $X_{3}^{(k)}=X_{3}^{(k)}(\tau;t,x)$ and use \eqref{tDerivation}, \eqref{c21}--\eqref{c22} to get
\begin{align}
	&\left|D_{\omega}^{(k)}\breve{\Upsilon}_{3}^{(k)}(t,x)\right| \notag\\
	\le& \left|D_{\omega}^{(k)}\breve{\Upsilon}_{3}^{(k)}\left(\tau_{3}^{(k)},\chi^{(k-1)}(\tau_{3}^{(k)})\right)\right| +C\epsilon\|\partial_{x}\breve{\Upsilon}_{3}^{(k)}\|+C\epsilon^{2}+C\mathcal{E}\epsilon\notag\\
	\le& \alpha_{3}(1+C\epsilon +C\mathcal{E})\left(|\frac{\partial\mathscr{A}_{3}}{\partial x}(\tilde{x},\mathbf{0})|\|{\chi^{(k-1)}}^{\prime}\| +|\frac{\partial\mathscr{A}_{3}}{\partial \bar{\Upsilon}_{1}}(\tilde{x},\mathbf{0})|\left(\|\partial_{t}\bar{\Upsilon}_{1}^{(k-1)}\|+\|{\chi^{(k-1)}}^{\prime}\|\|\partial_{x}\bar{\Upsilon}_{1}^{(k-1)}\|\right) \right)\notag\\
	&+CC_{l}\epsilon +C\epsilon\|\partial_{x}\breve{\Upsilon}_{3}^{(k)}\|
	+C\mathcal{E}\epsilon +C\epsilon^{2}. \label{c42}
\end{align}
Again applying \Cref{lemmaODE1}, one can get
\begin{multline}\label{chiPrimeEsti}
	\|{\chi^{(k-1)}}^{\prime}\|\leq
	(2+C\epsilon)\left(\sum_{i=1}^{3}\dfrac{1}{\alpha_{i}}|\frac{\partial \mathcal{F}}{\partial \bar{\Upsilon}_{i}}(\tilde{x},\mathbf{0})|\|\breve{\Upsilon}_{i}^{(k-1)}\| \right. \\
	\left. +|\frac{\partial \mathcal{F}}{\partial \bar{\rho}_{l}}(\tilde{x},\mathbf{0})|\|\bar{\rho}_{l}\|
	+|\frac{\partial \mathcal{F}}{\partial \bar{u}_{l}}(\tilde{x},\mathbf{0})|\|\bar{u}_{l}\|+|\frac{\partial \mathcal{F}}{\partial \bar{p}_{l}}(\tilde{x},\mathbf{0})|\|\bar{p}_{l}\|\right).
\end{multline}
Thus, combining \eqref{pA3px},\eqref{c10}, \eqref{c21} and \eqref{c22}, one has
\begin{equation}\label{c43}
	\|\partial_{t}\breve{\Upsilon}_{3}^{(k)}\| - C\epsilon\|\partial_{x}\breve{\Upsilon}_{3}^{(k)}\| \le \alpha_{3}(1+C\epsilon +C\mathcal{E})\left(C\mathcal{E}\epsilon+\theta_{d}C_{1}\epsilon\right)+CC_{l}\epsilon +C\epsilon\|\partial_{x}\breve{\Upsilon}_{3}^{(k)}\|
	+C\mathcal{E}\epsilon +C\epsilon^{2}.
\end{equation}
Using the linear equation \eqref{liearSys3}, one can directly see
\begin{align}
\|\partial_{x}\breve{\Upsilon}_{3}^{(k)}\|\leq\mu_{\max}\|\partial_{t}\breve{\Upsilon}_{3}^{(k)}\|+C\mathcal{E}+C\epsilon,\label{c44}
\end{align}
where
\begin{eqnarray*}
	\mu_{\max}=\max\limits_{\substack{i=1,2,3\\\breve{\mathbf{\Upsilon}}\in \mathcal{U}}}\frac{1}{\left|\lambda_{i}\left(x,\breve{\mathbf{\Upsilon}}\right)\right|}.
\end{eqnarray*}
Combining \eqref{c43} and \eqref{c44}, one has
\begin{align}
&\|\partial_{t}\breve{\Upsilon}_{3}^{(k)}\|<C_{1}\epsilon,\quad \|\partial_{t}\breve{\Upsilon}_{3}^{(k)}\|<C_{2}\epsilon.\label{c46}
\end{align}
for big enough $C_{1}$ and $C_{2}$.
\par Finally, with \eqref{c18} in hand, one can see that \eqref{c19} follows just as in \eqref{chiEsti} and \eqref{chiPrimeEsti}, replacing the number with $k$. Furthermore, the $\mathcal{T}$--periodicity of $\chi^{(k)}$ is guaranteed by \Cref{lemmaODE1}. And so far, we have proved \eqref{c17}--\eqref{c19}, and thus \eqref{appPeriodic}--\eqref{appShockesti}.
\end{proof}

\subsection{Cauchy properties of the approximate sequences}\label{suu2}
\begin{proof}[Proof of \eqref{cauchyEstima1} and \eqref{cauchyEstima2}]
For $k \ge 2$, we will inductively prove
\begin{align}
	&\|\breve{\mathbf{\Upsilon}}^{(k)}-\breve{\mathbf{\Upsilon}}^{(k-1)}\|\leq C_{1}\epsilon\theta^{k-1}, \label{c52}\\
	&\|\chi^{(k)}-\chi^{(k-1)}\|\leq C_{\mathcal{F}}\epsilon\mathcal{E}^{-1}\theta^{k-1}, \quad \|{\chi^{(k)}}^{\prime}-{\chi^{(k-1)}}^{\prime}\| \leq C_{\mathcal{F}}\epsilon\theta^{k-1},\label{c52.1}
\end{align}
under the assumption
\begin{align}
&\|\breve{\mathbf{\Upsilon}}^{(k-1)}-\breve{\mathbf{\Upsilon}}^{(k-2)}\|\leq C_{1}\epsilon\theta^{k-2}, \label{c53}\\
&\|\chi^{(k-1)}-\chi^{(k-2)}\|\leq C_{\mathcal{F}}\epsilon\mathcal{E}^{-1}\theta^{k-2}, \quad \|{\chi^{(k)}}^{\prime}-{\chi^{(k-1)}}^{\prime}\|\leq C_{\mathcal{F}}\epsilon\theta^{k-2},\label{c53.1}
\end{align}
for some $\theta \in (0,1)$ and $C_{\mathcal{F}}>0$ to be determined later. To start with, for $k=2$, \eqref{c53}--\eqref{c53.1} follows directly from \eqref{start} and \eqref{c18}--\eqref{c19}.
\par For $k \ge 2$, the equation \eqref{liearSys3} yields
\begin{align}
	&\left(\partial_{t}+\lambda_{3}^{(k-1)}\partial_{x}\right)\left(\breve{\Upsilon}_{3}^{(k)}- \breve{\Upsilon}_{3}^{(k-1)}\right) \notag \\
	=& - \left(\lambda_{3}^{(k-1)}-\lambda_{3}^{(k-2)}\right)\partial_{x}\breve{\Upsilon}_{3}^{(k-1)} +\alpha_{3}\left(g_{3}(x,\breve{\mathbf{\Upsilon}}^{(k-1)})-g_{3}(x,\breve{\mathbf{\Upsilon}}^{(k-2)})\right) \notag \\
	& - \sum_{i=1,2}\frac{\alpha_{3}}{\alpha_{i}}\left(l_{3i}^{(k-1)}-l_{3i}^{(k-2)}\right)\left(\partial_{t}+\lambda_{3}^{(k-1)}\partial_{x}\right)\breve{\Upsilon}_{i}^{(k-1)} - \sum_{i=1,2}\frac{\alpha_{3}l_{3i}^{(k-2)}}{\alpha_{i}}\left(\lambda_{3}^{(k-1)}-\lambda_{3}^{(k-2)}\right)\partial_{x}\breve{\Upsilon}_{3}^{(k-2)} \notag \\
	& - \sum_{i=1,2}\left(\partial_{t}+\lambda_{3}^{(k-1)}\partial_{x}\right)\left(\frac{\alpha_{3}l_{3i}^{(k-2)}}{\alpha_{i}}\left(\breve{\Upsilon}_{i}^{(k-1)}- \breve{\Upsilon}_{i}^{(k-2)}\right)\right) \notag \\
	& + \sum_{i=1,2}\left(\breve{\Upsilon}_{i}^{(k-1)}- \breve{\Upsilon}_{i}^{(k-2)}\right)\left(\partial_{t}+\lambda_{3}^{(k-1)}\partial_{x}\right)\left(\frac{\alpha_{3}l_{3i}^{(k-2)}}{\alpha_{i}}\right). \label{c54}
\end{align}
Then for any fixed $(t,x) \in \mathbb{R}\times[\tilde{x}-\delta]$, one can integrate \eqref{c54} along the characteristic $X_{3}^{(k)}$ from $\tau_{3}^{(k)}(t,x)$ to $t$ and apply Hadamard's formula to get
\begin{align}
	&\left|\breve{\Upsilon}_{3}^{(k)}(t,x)-\breve{\Upsilon}_{3}^{(k-1)}(t,x)\right| \notag \\
	\le & \left|\breve{\Upsilon}_{3}^{(k)}\left(\tau_{3}^{(k)},\chi^{(k-1)}(\tau_{3}^{(k)})\right)
	-\breve{\Upsilon}_{3}^{(k-1)}\left(\tau_{3}^{(k)},\chi^{(k-2)}(\tau_{3}^{(k)})\right)\right|  \notag \\
	&+ \left|\breve{\Upsilon}_{3}^{(k-1)}\left(\tau_{3}^{(k)},\chi^{(k-2)}(\tau_{3}^{(k)})\right)
	-\breve{\Upsilon}_{3}^{(k-1)}\left(\tau_{3}^{(k)},\chi^{(k-1)}(\tau_{3}^{(k)})\right)\right| + \left(C\epsilon+C\mathcal{E}\right)\|\breve{\mathbf{\Upsilon}}^{(k-1)}-\breve{\mathbf{\Upsilon}}^{(k-2)}\| \notag \\
	\le & \alpha_{3}(1+C\epsilon +C\mathcal{E})\left(|\frac{\partial\mathscr{A}_{3}}{\partial x}(\tilde{x},\mathbf{0})|\|\chi^{(k-1)}-\chi^{(k-2)}\| +|\frac{\partial\mathscr{A}_{3}}{\partial \bar{\Upsilon}_{1}}(\tilde{x},\mathbf{0})|\|\bar{\Upsilon}_{1}^{(k-1)}-\bar{\Upsilon}_{1}^{(k-2)}\| \right)\notag\\
	&+ (CC_{l}+ C_{1})\epsilon\|\chi^{(k-1)}-\chi^{(k-2)}\| +\left(C\epsilon+C\mathcal{E}\right)\|\breve{\mathbf{\Upsilon}}^{(k-1)}-\breve{\mathbf{\Upsilon}}^{(k-2)}\|. \label{c55}
\end{align}
Applying \Cref{lemmaODE2} to equation \eqref{c11} with number $k-1$ and $k-2$, one can get
\begin{multline}\label{c56}
	\|\chi^{(k-1)}-\chi^{(k-2)}\|\leq(1+C\epsilon)\frac{\exp(|\frac{\partial \mathcal{F}}{\partial x}(\tilde{x},\mathbf{0})|\mathcal{T})}{|\frac{\partial \mathcal{F}}{\partial x}(\tilde{x},\mathbf{0})|}\left(\sum_{i=1}^{3}\dfrac{1}{\alpha_{i}}|\frac{\partial \mathcal{F}}{\partial \bar{\Upsilon}_{i}}(\tilde{x},\mathbf{0})|\|\breve{\Upsilon}_{i}^{(k-1)}-\breve{\Upsilon}_{i}^{(k-2)}\| \right. \\
	\left. +\left(|\frac{\partial \mathcal{F}}{\partial \bar{\rho}_{l}}(\tilde{x},\mathbf{0})|
	+|\frac{\partial \mathcal{F}}{\partial \bar{u}_{l}}(\tilde{x},\mathbf{0})|+|\frac{\partial \mathcal{F}}{\partial \bar{p}_{l}}(\tilde{x},\mathbf{0})|\right)C_{l}\epsilon\|\chi^{(k-1)}-\chi^{(k-2)}\|\right).
\end{multline}
By \eqref{Fxsmall}, one has
\begin{eqnarray*}
	\exp\left(\left|\frac{\partial \mathcal{F}}{\partial x}(\tilde{x},\mathbf{0})\right|\mathcal{T}\right) = 1+O(1)\mathcal{E},
\end{eqnarray*}
and thus, combining \eqref{c55}, \eqref{c56} and \eqref{dissipationCon},
\begin{multline}\label{cauchy}
	\left|\breve{\Upsilon}_{3}^{(k)}(t,x)-\breve{\Upsilon}_{3}^{(k-1)}(t,x)\right| \\
	\le (1+C\epsilon +C\mathcal{E} +\frac{C\epsilon}{\mathcal{E}})\left(\alpha_{3}\theta_{d} + \dfrac{\alpha_{3}}{\alpha_{2}}\right)\|\breve{\mathbf{\Upsilon}}^{(k-1)}-\breve{\mathbf{\Upsilon}}^{(k-2)}\| +\left(C\epsilon+C\mathcal{E}\right)\|\breve{\mathbf{\Upsilon}}^{(k-1)}-\breve{\mathbf{\Upsilon}}^{(k-2)}\|.
\end{multline}
For small enough $\epsilon$, one can find suitable $\theta$ such that
\begin{eqnarray*}
	(1+C\epsilon +C\mathcal{E}+\frac{C\epsilon}{\mathcal{E}})\left(\alpha_{3}\theta_{d} + \dfrac{\alpha_{3}}{\alpha_{2}}\right) + C\epsilon +C \mathcal{E} < \theta < 1.
\end{eqnarray*}
Then with \eqref{c53} in hand, one has
\begin{align}
\left|\breve{\Upsilon}_{3}^{(k)}(t,x)-\breve{\Upsilon}_{3}^{(k-1)}(t,x)\right|\leq C_{1}\epsilon\theta^{k-1}.\label{c61}
\end{align}
The analysis for $\breve{\Upsilon}_{1}^{(k)}$ and $\breve{\Upsilon}_{2}^{(k)}$ is similar or simpler. With \eqref{c52} in hand, \eqref{c52.1} follows for suitable big $C_{\mathcal{F}}$ by applying \Cref{lemmaODE2}. Here, we have done the proof of \eqref{cauchyEstima1}--\eqref{cauchyEstima2}.
\end{proof}

\subsection{Estimates of continuity modulus}\label{suu3}
\begin{proof}[Proof of \eqref{modulus}]
To estimate the modulus of continuity as in \eqref{modulus}, we will prove for $k\ge 1$,
\begin{align}
&\mathop{\max}\limits_{i=1,2,3}\varpi(\eta|\partial_{t}\breve{\Upsilon}_{i}^{(k)}(\cdot,x))<C_{M,1}\Phi(\eta), \quad \mathop{\max}\limits_{i=1,2,3}\varpi(\eta|\partial_{x}\breve{\Upsilon}_{i}^{(k)}(\cdot,x))<C_{M,2}\Phi(\eta),\label{c63}\\
&\max\limits_{i=1,2,3}\left\{\varpi(\eta|\partial_{t}\breve{\Upsilon}_{i}^{(k)}(\cdot,\cdot))
+\varpi(\eta|\partial_{x}\breve{\Upsilon}_{i}^{(k)}(\cdot,\cdot))\right\}<C_{M}\Phi(\eta),\label{c64}
\end{align}
under the assumption
\begin{align}
	&\mathop{\max}\limits_{i=1,2,3}\varpi(\eta|\partial_{t}\breve{\Upsilon}_{i}^{(k-1)}(\cdot,x))<C_{M,1}\Phi(\eta), \quad \mathop{\max}\limits_{i=1,2,3}\varpi(\eta|\partial_{x}\breve{\Upsilon}_{i}^{(k-1)}(\cdot,x))<C_{M,2}\Phi(\eta),\label{c65}\\
	&\max\limits_{i=1,2,3}\left\{\varpi(\eta|\partial_{t}\breve{\Upsilon}_{i}^{(k-1)}(\cdot,\cdot))
	+\varpi(\eta|\partial_{x}\breve{\Upsilon}_{i}^{(k-1)}(\cdot,\cdot))\right\}<C_{M}\Phi(\eta),\label{c66}
\end{align}
where
\begin{eqnarray*}
	\varpi(\eta|h(\cdot,x))=\mathop{\sup}\limits_{|t_{1}-t_{2}|\leq\eta}|h(t_{1},x)-h(t_{2},x)|,
\end{eqnarray*}
and $C_{M,1},C_{M,2}$ and $C_{M}$ are constants to be determined later. Moreover, we define the modulus function by
\begin{align}
\Phi(\eta)=&\eta+\varpi(\eta|\bar{\Upsilon}_{1,b+}^{\prime})+\varpi(\eta|\frac{\partial\bar{\rho}_{l}}{\partial t}(\cdot,\cdot))
+\varpi(\eta|\frac{\partial\bar{\rho}_{l}}{\partial x}(\cdot,\cdot))+\varpi(\eta|\frac{\partial\bar{u}_{l}}{\partial t}(\cdot,\cdot))\notag\\
&+\varpi(\eta|\frac{\partial\bar{u}_{l}}{\partial x}(\cdot,\cdot))+\varpi(\eta|\frac{\partial\bar{p}_{l}}{\partial t}(\cdot,\cdot))+\varpi(\eta|\frac{\partial\bar{p}_{l}}{\partial x}(\cdot,\cdot)),\label{c67}
\end{align}
where for $h \in C_{t}^{1}$
\begin{eqnarray*}
	\varpi(\eta|h) = \sup_{|t_1 - t_2| \leq \eta} \left|h(t_1) - h(t_2)\right|.
\end{eqnarray*}
Since \eqref{boundarySmall} and \eqref{c10}, one has $\lim_{\eta \to 0+} \Phi(\eta) = 0$. To start with, \eqref{start} yields \eqref{c65}--\eqref{c66} for $k=1$ directly. In the rest of this subsection, we will prove \eqref{c63}--\eqref{c64} in 4 steps.
\begin{description}[leftmargin=0cm]
	\item[Step 1. Estimates for two points on the shock boundary]  For given $t_{1},t_{2}$ with $\left|t_{1}-t_{2}\right| \le \eta$, by taking the temporal derivative on the boundary condition \eqref{c9}, one has
	\begin{align}
		&\frac{1}{\alpha_{3}}\left(\frac{d}{dt}\breve{\Upsilon}_{3}^{(k)}(t_{2},\chi^{(k-1)}(t_{2})) -  \frac{d}{dt}\breve{\Upsilon}_{3}^{(k)}(t_{1},\chi^{(k-1)}(t_{1}))\right) \notag \\
		=& \left({\chi^{(k-1)}}^{\prime}(t_{2})- {\chi^{(k-1)}}^{\prime}(t_{1})\right)\frac{\partial \mathscr{A}_{3}^{(k-1)}}{\partial x}\left[t_{2},\chi^{(k)}(t_{2})\right] \notag\\
		&+ {\chi^{(k-1)}}^{\prime}(t_{1})\left(\frac{\partial \mathscr{A}_{3}^{(k-1)}}{\partial x}\left[t_{2},\chi^{(k)}(t_{2})\right]-\frac{\partial \mathscr{A}_{3}^{(k-1)}}{\partial x}\left[t_{1},\chi^{(k)}(t_{1})\right]\right) \notag \\
		&+ \left(\frac{d}{d t}\bar{\Upsilon}_{1}^{(k-1)}(t_{2},\chi^{(k-1)}(t_{2}))-\frac{d}{d t}\bar{\Upsilon}_{1}^{(k-1)}(t_{1},\chi^{(k-1)}(t_{1}))\right)\frac{\partial \mathscr{A}_{3}^{(k-1)}}{\partial \bar{\Upsilon}_{1}}\left[t_{2},\chi^{(k)}(t_{2})\right] \notag \\
		&+ \frac{d}{d t}\bar{\Upsilon}_{1}^{(k-1)}(t_{1},\chi^{(k-1)}(t_{1}))\left(\frac{\partial \mathscr{A}_{3}^{(k-1)}}{\partial \bar{\Upsilon}_{1}}\left[t_{2},\chi^{(k)}(t_{2})\right] - \frac{\partial \mathscr{A}_{3}^{(k-1)}}{\partial \bar{\Upsilon}_{1}}\left[t_{1},\chi^{(k)}(t_{1})\right]\right) \notag \\
		& + \sum_{i=1}^{3}\left(\frac{d}{d t}\bar{U}_{i,l}^{(k-1)}(t_{2},\chi^{(k-1)}(t_{2}))-\frac{d}{d t}\bar{U}_{i,l}^{(k-1)}(t_{1},\chi^{(k-1)}(t_{1}))\right)\frac{\partial \mathscr{A}_{3}^{(k-1)}}{\partial \bar{U}_{i,l}}\left[t_{2},\chi^{(k)}(t_{2})\right] \notag \\
		&+ \sum_{i=1}^{3} \frac{d}{d t}\bar{U}_{i,l}^{(k-1)}(t_{1},\chi^{(k-1)}(t_{1}))\left(\frac{\partial \mathscr{A}_{3}^{(k-1)}}{\partial \bar{U}_{i,l}}\left[t_{2},\chi^{(k)}(t_{2})\right] - \frac{\partial \mathscr{A}_{3}^{(k-1)}}{\partial \bar{U}_{i,l}}\left[t_{1},\chi^{(k)}(t_{1})\right]\right),
	\end{align}
	where we denote $(\bar{U}_{1,l},\bar{U}_{2,l}, \bar{U}_{3,l}) =(\bar{\rho}_{l}, \bar{u}_{l},\bar{p}_{l})$ for simplification. By \eqref{bb21}, \eqref{C1estima}, \eqref{c19} and \eqref{c65}, one has
	\begin{multline}\label{modulusDissipation}
		\left|\frac{d}{dt}\breve{\Upsilon}_{3}^{(k)}(t_{2},\chi^{(k-1)}(t_{2})) -  \frac{d}{dt}\breve{\Upsilon}_{3}^{(k)}(t_{1},\chi^{(k-1)}(t_{1}))\right| \le \left(C\mathcal{E}C_{P}\epsilon\eta + CC_{P}\epsilon\eta\right)\mathcal{E}+CC_{\mathcal{F},1}\epsilon^{2}\mathcal{E}\eta \\
		+\alpha_{3}\theta_{d}\left(C_{M,1}+C_{\mathcal{F},1}\epsilon C_{M,2}\right)\Phi(\eta)+C(1+C_{l}C_{1}\epsilon^{2})\Phi(\eta).
	\end{multline}
	This equation reveals the dissipation structure of the continuity modulus at the shock boundary. It will help us estimate the modulus in the form
	\begin{equation}
		\left|D_{\omega}^{(k)}\breve{\Upsilon}_{3}^{(k)}(t_{2},x) - D_{\omega}^{(k)}\breve{\Upsilon}_{3}^{(k)}(t_{1},x)\right|
	\end{equation}
	with the derivation operator defined by \eqref{dOmega}.
	
	\item[Step 2. Uniform $\|\omega_{3}^{(k)}\|_{C^{1}}$ estimates] We need to estimate the $C^{1}$ norm of $\omega_{3}^{(k)} = \omega_{3}^{(k)}(t,x)$. Taking the derivation operator $D_{\omega}^{(k)}$ to \eqref{omegaInvariant} yields
	\begin{equation}
		\frac{\partial}{\partial t} D_{\omega}^{(k)}\omega_{3}^{(k)} + \lambda_{3}^{(k-1)}\frac{\partial}{\partial x} D_{\omega}^{(k)}\omega_{3}^{(k)} = -D_{3}^{(k)}\lambda_{3}^{(k-1)}\frac{\partial \omega_{3}^{(k)}}{\partial x}.
	\end{equation}
	Integrating it along the characteristic \eqref{characteritic} and using \eqref{bb21}, one can obtain
	\begin{align}
		&\left|D_{\omega}^{(k)}\omega_{3}^{(k)}(t,x)\right| \notag\\
		\le& \left|D_{\omega}^{(k)}\omega_{3}^{(k)}\left(\tau_{3}^{(k)},\chi^{(k-1)}(\tau_{3}^{(k)})\right)\right| +C_{\mathcal{F},1}\epsilon\|\partial_{x}\omega_{3}^{(k)}\| \notag\\
		\le& (1+C\epsilon +C\mathcal{E})\left(\sum_{i=1}^{3}\dfrac{1}{\alpha_{i}}|\frac{\partial \mathcal{F}}{\partial \bar{\Upsilon}_{i}}(\tilde{x},\mathbf{0})|\|\partial_{t}\breve{\Upsilon}_{i}^{(k-1)}\| \right) +CC_{l}\epsilon +C\epsilon\|\partial_{x}\breve{\Upsilon}_{3}^{(k)}\|
		+CC_{1}\mathcal{E}\epsilon. \label{c72}
	\end{align}
	Then, one can use the similar argument as \eqref{c42}--\eqref{c46} to get
	\begin{align}
		\|\frac{\partial\omega_{2}^{(k)}}{\partial x}\|\leq\mu_{\max}\|\frac{\partial\omega_{2}^{(k)}}{\partial t}\|, \label{c73}
	\end{align}
	and thus obtain from \eqref{c72} that
	\begin{equation}\label{omegaC1}
		\|\frac{\partial\omega_{3}^{(k)}}{\partial t}\|<C_{\mathcal{F},2}\epsilon,\quad \|\frac{\partial\omega_{3}^{(k)}}{\partial x}\|\leq\mu_{\max}C_{\mathcal{F},2}\epsilon
	\end{equation}
	for some suitable large constant $C_{\mathcal{F},2}$ independent of $k$.
	
	\item[Step 3. Estimates in the rigion by the method of characteristic] To simplify the analysis, we set $\mu_{i} = (\lambda_{i})^{-1}, \mu_{i}^{(k-1)} = (\lambda_{i}^{(k-1)})^{-1}$. For any fixed $(t,x)\in \mathbb{R}\times[\tilde{x}-\delta,L]$, we denote the inverse function of characteristics \eqref{characteritic} by $T_{3}^{(k)}(y;t,x)$, i.e.,
	\begin{equation}
		X_{3}^{(k)}\left(T_{3}^{(k)}(y;t,x);t,x\right) \equiv y.
	\end{equation}
	Obviously, it is well--defined by the implicit function theorem and driven by the ODE
	\begin{align}\label{characteriticT}
		\left\{
		\begin{aligned}
			&\frac{dT_{3}^{(k)}}{dy}(y;t,x)=\mu_{3}\left(y,\breve{\mathbf{\Upsilon}}^{(k-1)}\left(T_{3}^{(k)}(y;t,x),y\right)\right),\\
			&T_{3}^{(k)}(x;t,x)=t.
		\end{aligned}
		\right.
	\end{align}
	\par Given two points $(t_{1},x), (t_{2},x)$ in the region with $|t_{1}-t_{2}| \le \eta$, for any $y\in [\tilde{x}-\delta,L]$, one can integrate \eqref{characteriticT} from $x$ to $y$ and get
	\begin{align}
		&\left|T_{3}^{(k)}(y;t_{2},x) - T_{3}^{(k)}(y;t_{1},x)\right| \notag \\
		\le &|t_{2} - t_{1}| + |\int_{x}^{y} \mu_{3}\left(z,\breve{\mathbf{\Upsilon}}^{(k-1)}\left(T_{3}^{(k)}(z;t_{2},x),z\right)\right)- \mu_{3}\left(z,\breve{\mathbf{\Upsilon}}^{(k-1)}\left(T_{3}^{(k)}(z;t_{1},x),z\right)\right)dz| \notag \\
		\le & |t_{2} - t_{1}| + CC_{P}\epsilon\int_{x}^{y} |T_{3}^{(k)}(z;t_{2},x) - T_{3}^{(k)}(z;t_{1},x)|dz.
	\end{align}
	Combining with Gronwall's inequality, one has
	\begin{equation}\label{T1T2esitm}
		\left|T_{3}^{(k)}(y;t_{2},x) - T_{3}^{(k)}(y;t_{1},x)\right| \le (1+CC_{P}\epsilon)|t_{2} - t_{1}| \le (1+CC_{P}\epsilon)\eta, \quad \forall y\in [\tilde{x}-\delta,L].
	\end{equation}
	Recalling the exit--time $\tau_{3}^{(k)}(t,x)$, we denote exit--position by
	\begin{eqnarray*}
		\xi_{3}^{(k)}(t,x) := X_{3}^{(k)}(\tau_{3}^{(k)};t,x),
	\end{eqnarray*}
	and by \eqref{exitTime}, one has
	\begin{equation}\label{xiChi}
		\xi_{3}^{(k)}(t,x) = \chi^{(k-1)}(T_{3}^{(k)}(\xi_{3}^{(k)};t,x)).
	\end{equation}
	Thus, one can see
	\begin{align}
		&\left|\xi_{3}^{(k)}(t_{1},x)- \xi_{3}^{(k)}(t_{2},x)\right| = \left|\chi^{(k-1)}(T_{3}^{(k)}(\xi_{3}^{(k)}(t_{1},x);t_{1},x))-\chi^{(k-1)}(T_{3}^{(k)}(\xi_{3}^{(k)}(t_{2},x);t_{2},x))\right| \notag \\
		\le & \|{\chi^{(k-1)}}^{\prime}\|\left(|T_{3}^{(k)}(\xi_{3}^{(k)}(t_{1},x);t_{1},x)-T_{3}^{(k)}(\xi_{3}^{(k)}(t_{2},x);t_{1},x)|\right.\notag\\
		&\qquad\qquad\qquad\qquad\qquad\qquad \left.-|T_{3}^{(k)}(\xi_{3}^{(k)}(t_{2},x);t_{1},x)-T_{3}^{(k)}(\xi_{3}^{(k)}(t_{2},x);t_{2},x)|\right) \notag \\
		\le & \|{\chi^{(k-1)}}^{\prime}\|\left(\mu_{\max}\left|\xi_{3}^{(k)}(t_{1},x)- \xi_{3}^{(k)}(t_{2},x)\right|+(1+CC_{P}\epsilon)\eta\right). \notag
	\end{align}
	Consequently, one has
	\begin{equation}\label{X1X2estim}
		\left|\xi_{3}^{(k)}(t_{1},x)- \xi_{3}^{(k)}(t_{2},x)\right| \le (1+CC_{\mathcal{F},1}\epsilon)C_{\mathcal{F},1}\eta.
	\end{equation}
	Then, one can integrate \eqref{deriTrans} along $T_{3}^{(k)}(y;t_{1},x)$ and $T_{3}^{(k)}(y;t_{2},x)$ respectively, and get
	\begin{align}
		& D_{\omega}^{(k)}\breve{\Upsilon}_{3}^{(k)}(t_{2},x) - D_{\omega}^{(k)}\breve{\Upsilon}_{3}^{(k)}(t_{1},x) \notag \\
		=&D_{\omega}^{(k)}\breve{\Upsilon}_{3}^{(k)}(\tau_{3}^{(k)}(t_{2},x), \xi_{3}^{(k)}(t_{2},x)) -D_{\omega}^{(k)}\breve{\Upsilon}_{3}^{(k)}(\tau_{3}^{(k)}(t_{1},x), \xi_{3}^{(k)}(t_{1},x)) \notag \\
		&-\int_{\xi_{3}^{(k)}(t_{2},x)}^{x} D_{3}^{(k)}\lambda_{3}^{(k-1)}\partial_{x}\breve{\Upsilon}_{3}^{(k)} \Big|_{(T_{3}^{(k)}(y;t_{1},x),y)}^{(T_{3}^{(k)}(y;t_{2},x),y)}dy \notag\\
		&- \int_{\xi_{3}^{(k)}(t_{1},x)}^{\xi_{3}^{(k)}(t_{2},x)} D_{3}^{(k)}\lambda_{3}^{(k-1)}\partial_{x}\breve{\Upsilon}_{3}^{(k)} (T_{3}^{(k)}(y;t_{1},x),y)dy \notag \\
		& - \sum_{i=1,2}\left(\frac{\alpha_{3}l_{3i}^{(k-1)}}{\alpha_{i}}D_{\omega}^{(k)}\breve{\Upsilon}_{i}^{(k-1)}\right)\Big|_{(t_{1},x)}^{(t_{2},x)}- \sum_{i=1,2}\left(\frac{\alpha_{3}l_{3i}^{(k-1)}}{\alpha_{i}}D_{\omega}^{(k)}\breve{\Upsilon}_{i}^{(k-1)}\right)\Big|_{(T_{3}^{(k)}(y;t_{1},x),y)}^{(T_{3}^{(k)}(y;t_{2},x),y)} \notag \\
		& +\int_{\xi_{3}^{(k)}(t_{2},x)}^{x} \Lambda^{(k)} \Big|_{(T_{3}^{(k)}(y;t_{1},x),y)}^{(T_{3}^{(k)}(y;t_{2},x),y)}dy + \int_{\xi_{3}^{(k)}(t_{1},x)}^{\xi_{3}^{(k)}(t_{2},x)} \Lambda^{(k)} (T_{3}^{(k)}(y;t_{1},x),y)dy, \label{modulusInt}
	\end{align}
	where we use the notation for simplification
	\begin{eqnarray*}
		f\big|_{(t_{1},x_{1})}^{(t_{2},x_{2})} = f(t_{2},x_{2}) - f(t_{1},x_{1}),
	\end{eqnarray*}
	for some function $f=f(t,x)$, and
	\begin{multline}
		\nonumber
		\Lambda^{(k)} := \alpha_{3}D_{3}^{(k)}g_{3} - \sum_{i=1,2}\frac{\alpha_{3}D_{3}^{(k)}l_{3i}^{(k-1)}}{\alpha_{i}}\left(\partial_{t}\breve{\Upsilon}_{i}^{(k-1)}+\lambda_{3}^{(k-1)}\partial_{x}\breve{\Upsilon}_{i}^{(k-1)}\right) \\
		 -\sum_{i=1,2}\frac{\alpha_{3}l_{3i}^{(k-1)}}{\alpha_{i}}D_{3}^{(k)}\lambda_{3}^{(k-1)}\partial_{x}\breve{\Upsilon}_{i}^{(k-1)}+\sum_{i=1,2}\left(\left(\partial_{t}+\lambda_{3}^{(k-1)}\partial_{x}\right)\frac{\alpha_{3}l_{3i}^{(k-1)}}{\alpha_{i}}\right)\left(D_{\omega}^{(k)}\breve{\Upsilon}_{i}^{(k-1)}\right).
	\end{multline}
	By the definition \eqref{dOmega}, one can see that
	\begin{multline}\label{triangleIneq}
		\left|\partial_{t}\breve{\Upsilon}_{3}^{(k)}(t_{2},x) - \partial_{t}\breve{\Upsilon}_{3}^{(k)}(t_{1},x) \right| \le \left|D_{\omega}^{(k)}\breve{\Upsilon}_{3}^{(k)}(t_{2},x) - D_{\omega}^{(k)}\breve{\Upsilon}_{3}^{(k)}(t_{1},x)\right| \\
		+ \left|\omega_{3}^{(k)}(t_{2},x)\right|\left|\partial_{x}\breve{\Upsilon}_{3}^{(k)}(t_{2},x) - \partial_{x}\breve{\Upsilon}_{3}^{(k)}(t_{1},x) \right| + \left|\omega_{3}^{(k)}(t_{2},x) - \omega_{3}^{(k)}(t_{1},x)\right|\left|\partial_{x}\breve{\Upsilon}_{3}^{(k)}(t_{1},x)\right|.
	\end{multline}
	Here, combining \eqref{modulusDissipation}, \eqref{omegaC1}, \eqref{T1T2esitm} and \eqref{X1X2estim}--\eqref{triangleIneq}, as well as $C^{1}$ estimates \eqref{C1estima}, one can get that for big enough $C_{M,1}>0$,
	\begin{equation}\label{modulusEstiX}
		\varpi(\eta|\frac{\partial \breve{\Upsilon}_{3}^{(k)}}{\partial t}(\cdot,x)) \le \theta_{0}C_{M,1}\Phi(\eta) + C\epsilon\varpi(\eta|\frac{\partial \breve{\Upsilon}_{3}^{(k)}}{\partial x}(\cdot,x)),
	\end{equation}
	where $\theta_{0}$ is some constant satisfying $\alpha_{3}\theta_{d} < \theta_{0} <1$.
	\par To end with, using the equation \eqref{liearSys3} and noting the known estimates \eqref{C1estima} and \eqref{c66}, one can derive
	\begin{equation}\label{modulusXfromT}
		\varpi(\eta|\frac{\partial \breve{\Upsilon}_{3}^{(k)}}{\partial x}(\cdot,x))
		\leq \mu_{\max}\varpi(\eta|\frac{\partial \breve{\Upsilon}_{3}^{(k)}}{\partial t}(\cdot,x))+(C\epsilon+C\mathcal{E})\Phi(\eta),
	\end{equation}
	which, together with \eqref{modulusEstiX}, completes the proof of \eqref{c63} for suitable big $C_{M,1}$, $C_{M,2}$.
	
	\item[Step 4. Uniform continuity modulus estimates] To complete this subsection, now we turn to the proof of \eqref{c64}. First, we consider two points $(t_{1},x_{1})$, $(t_{2},x_{2})$ with $|t_{1}-t_{2}|\le \eta$ and $|x_{1}-x_{2}|\le \eta$ located on a same characteristic $T_{3}^{(k)}(y;t_{1},x_{1})$, namely,
	\begin{eqnarray*}
		t_{2} = T_{3}^{(k)}(x_{2};t_{1},x_{1}).
	\end{eqnarray*}
	For this case, one can integrate the equation \eqref{deriTrans} along the characteristic to get
	\begin{align}
		&D_{\omega}^{(k)}\breve{\Upsilon}_{3}^{(k)}(t_{2},x_{2}) - D_{\omega}^{(k)}\breve{\Upsilon}_{3}^{(k)}(t_{1},x_{1}) \notag \\
		=&  \int_{x_{1}}^{x_{2}}\left(-D_{3}^{(k)}\lambda_{3}^{(k-1)}\partial_{x}\breve{\Upsilon}_{3}^{(k)}+\Lambda^{(k)}\right)(T_{3}^{(k)}(y;t_{1}, x_{1}),y)dy \notag\\
		&- \sum_{i=1,2}\left(\frac{\alpha_{3}l_{3i}^{(k-1)}}{\alpha_{i}}\right)\Big|_{(t_{1},x_{1})}^{(t_{2},x_{2})}D_{\omega}^{(k)}\breve{\Upsilon}_{i}^{(k-1)}(t_{2},x_{2}) - \sum_{i=1,2}\left(\frac{\alpha_{3}l_{3i}^{(k-1)}}{\alpha_{i}}\right)(t_{1},x_{1})D_{\omega}^{(k)}\breve{\Upsilon}_{i}^{(k-1)}\Big|_{(t_{1},x_{1})}^{(t_{2},x_{2})}.
	\end{align}
	And thus, with \eqref{Fxsmall}, \eqref{C1estima} and \eqref{c66}, one has
	\begin{equation}\label{sameCharacter}
		\left|D_{\omega}^{(k)}\breve{\Upsilon}_{3}^{(k)}(t_{2},x_{2}) - D_{\omega}^{(k)}\breve{\Upsilon}_{3}^{(k)}(t_{1},x_{1})\right| \le CC_{M}(\epsilon+\mathcal{E})\Phi(\eta).
	\end{equation}
	\par Finally, for general two points $(t_{1},x_{1})$, $(t_{2},x_{2})$ with $|t_{1}-t_{2}|\le \eta$ and $|x_{1}-x_{2}|\le \eta$, one can find a point $(t_{3},x_{2})$ located on the characteristic $T_{3}^{(k)}(y;t_{1},x_{1})$. By \eqref{characteriticT}, one has
	\begin{equation}
		\nonumber
		\left|t_{3}-t_{1}\right| \le \mu_{\max}\left|x_{2}-x_{1}\right| \le \mu_{\max}\eta,
	\end{equation}
	and thus
	\begin{equation}\label{muandone}
		\left|t_{3}-t_{2}\right| \le  \left(\mu_{\max}+1\right)\eta.
	\end{equation}
	Therefore, by \eqref{c63}, \eqref{sameCharacter} and \eqref{muandone}, one has
	\begin{align}
		&\left|D_{\omega}^{(k)}\breve{\Upsilon}_{3}^{(k)}(t_{2},x_{2}) - D_{\omega}^{(k)}\breve{\Upsilon}_{3}^{(k)}(t_{1},x_{1})\right| \notag\\
		\le & \left|D_{\omega}^{(k)}\breve{\Upsilon}_{3}^{(k)}(t_{2},x_{2}) - D_{\omega}^{(k)}\breve{\Upsilon}_{3}^{(k)}(t_{3},x_{2})\right| + \left|D_{\omega}^{(k)}\breve{\Upsilon}_{3}^{(k)}(t_{3},x_{2}) - D_{\omega}^{(k)}\breve{\Upsilon}_{3}^{(k)}(t_{1},x_{1})\right| \notag \\
		\le &\left(\mu_{\max}+2\right)C_{M,1}\Phi(\eta) + CC_{M}(\epsilon+\mathcal{E})\Phi(\eta).
	\end{align}
	Using the similar argument as \eqref{triangleIneq} and \eqref{modulusXfromT}, one can conclude
	\begin{equation}
		\varpi(\eta|\frac{\partial \breve{\Upsilon}_{3}^{(k)}}{\partial t}(\cdot,\cdot)) \le \left(\mu_{\max}+3\right)C_{M,1}\Phi(\eta),\quad \varpi(\eta|\frac{\partial \breve{\Upsilon}_{3}^{(k)}}{\partial x}(\cdot,\cdot)) \le \mu_{\max}\left(\mu_{\max}+3\right)C_{M,1}\Phi(\eta).
	\end{equation}
	One can choose $C_{M} = \left(\mu_{\max}+2\right)^{2}C_{M,1}$, and then the proof of \eqref{c64} is completed.
\end{description}
\end{proof}

\section{Stability of Time--periodic Transonic Shock Solutions}\label{s4}
In this section, we prove our main \Cref{t2}. To begin with, building upon the framework in~\cite{Xin, Rauch}, the initial--boundary value problem~\eqref{a1},\eqref{a3}--\eqref{a5} with all the assumptions \eqref{a8}--\eqref{a9},\eqref{compatibCri}, \eqref{a11}--\eqref{AA11} and initial data \eqref{a19}--\eqref{a20} admits a unique transonic shock solution $(\rho, u, p)(t,x)$ with shock position $\chi(t)$ satisfying
\begin{align}
\sum_{m=0}^{2}|\partial_{t}^{m}(\chi(t)-\tilde{x})|+\|(\bar{\rho}_{-},\bar{u}_{-},\bar{p}_{-})(t,\cdot)
\|_{C^{1}([0,\chi(t)))}
&+\|\breve{\mathbf{\Upsilon}}(t,\cdot)\|_{C^{1}((\chi(t),L])}
<C\epsilon,\label{d1}
\end{align}
where $$\bar{\rho}_{-}(t,x)=\rho_{-}(t,x)-\tilde{\rho}_{-}(x),~\bar{u}_{-}(t,x)=u_{-}(t,x)-\tilde{u}_{-}(x),
~\bar{p}_{-}(t,x)=p_{-}(t,x)-\tilde{p}_{-}(x).$$
\par Moreover, we denote the maximum time for the wave to reach the boundary by
\begin{equation}\label{timeMax}
	\mathcal{T}_{0}=\mu_{\max}L.
\end{equation}
Then in the supersonic region, the exponential decay follows via the method analogous to~\cite{Qup1}, namely, for $t\in[N\mathcal{T}_{0},(N+1)\mathcal{T}_{0}],~N\in\mathbb{Z}_{+}$,
\begin{align}
\|(\bar{\rho}_{-},\bar{u}_{-},\bar{p}_{-})(t,\cdot)-(\bar{\rho}_{-}^{(\mathcal{T})},\bar{u}_{-}^{(\mathcal{T})},
\bar{p}_{-}^{(\mathcal{T})})(t,\cdot)\|
_{C^{0}([0,\min\{\chi(t),\chi^{(\mathcal{T})}(t)\}))}\leq C_{L}\epsilon\zeta^{N},\label{d2}
\end{align}
where $C_{L}$ is a positive constant.
\par For the problem in the subsonic region, we denote by 
\begin{eqnarray*}
	\breve{\mathbf{\Upsilon}}^{(\mathcal{T})}(t,x) = \left(\breve{\Upsilon}_{1}^{(\mathcal{T})}, \breve{\Upsilon}_{2}^{(\mathcal{T})}, \breve{\Upsilon}_{3}^{(\mathcal{T})}\right)(t,x), \quad
	\breve{\mathbf{\Upsilon}}(t,x) = \left(\breve{\Upsilon}_{1}, \breve{\Upsilon}_{2}, \breve{\Upsilon}_{3}\right)(t,x),
\end{eqnarray*}
the solution to system \eqref{a1},\eqref{a3}--\eqref{a5} with the initial data \eqref{a14} and \eqref{a19} respectively in the associated regions
\begin{eqnarray*}
	\Omega_{+}^{\mathcal{T}} = \left\{(t,x):t\in \mathbb{R}_{+}, \chi^{(\mathcal{T})}(t)\le x \le L \right\}, \quad \Omega_{+} := \left\{(t,x):t\in \mathbb{R}_{+}, \chi(t)\le x \le L \right\}.
\end{eqnarray*}
In order to prove \eqref{a21}, with \eqref{d2} in hand, it remains to prove inductively that,
for some $t_{0}>0$ and $N \in \mathbb{Z}_{+}$, 
\begin{align}
	&\|\breve{\mathbf{\Upsilon}}(t,\cdot)-\breve{\mathbf{\Upsilon}}^{(\mathcal{T})}(t,\cdot)\| \le C_{S,1}\epsilon\zeta^{N+1}, \quad \forall t \in [t_{0}+\mathcal{T}_{0},t_{0}+2\mathcal{T}_{0}], \\
	&|\chi(t)
	-\chi^{(\mathcal{T})}(t)| \le C_{S,2}\epsilon\zeta^{N+1}, \quad |\chi^{\prime}(t)
	-{\chi^{(\mathcal{T})}}^{\prime}(t)| \le C_{S,2}\epsilon\zeta^{N+1}, \quad \forall t \in [t_{0}+\mathcal{T}_{0},t_{0}+2\mathcal{T}_{0}],
\end{align}
under the assumptions
\begin{align}
	&\|\breve{\mathbf{\Upsilon}}(t,\cdot)-\breve{\mathbf{\Upsilon}}^{(\mathcal{T})}(t,\cdot)\| \le C_{S,1}\epsilon\zeta^{N}, \quad \forall t \in [t_{0},t_{0}+\mathcal{T}_{0}], \\
	&|\chi(t)
	-\chi^{(\mathcal{T})}(t)| \le C_{S,2}\epsilon\zeta^{N}, \quad |\chi^{\prime}(t)
	-{\chi^{(\mathcal{T})}}^{\prime}(t)| \le C_{S,2}\epsilon\zeta^{N}, \quad \forall t \in [t_{0},t_{0}+\mathcal{T}_{0}].
\end{align}
To this end, we use a bootstrap argument as follows. Denote
\begin{align}
	&\Gamma_{1}(t)=\mathop{\max}\limits_{i=1,2,3}\mathop{\sup}\limits_{(t,x)\in\Omega_{+}^{\mathcal{T}}\cap\Omega_{+}}
	|\breve{\Upsilon}_{i}(t,x)
	-\breve{\Upsilon}^{(\mathcal{T})}_{i}(t,x)|, \notag \\
	&\Gamma_{2}(t)=
	|\chi(t)
	-\chi^{(\mathcal{T})}(t)|, \qquad \Gamma_{3}(t)=
	|\chi^{\prime}(t)
	-{\chi^{(\mathcal{T})}}^{\prime}(t)|. \notag
\end{align}
By the regularity of $\breve{\mathbf{\Upsilon}}$ and $\chi$, one can see that $\Gamma_{1}(t)$, $\Gamma_{2}(t)$ and $\Gamma_{3}(t)$ are continuous and satisfy
\begin{eqnarray*}
	\Gamma_{1}(t_{0}+\mathcal{T}_{0}) \le C_{S,1}\epsilon\zeta^{N}, \quad \Gamma_{2}(t_{0}+\mathcal{T}_{0}) \le C_{S,2}\epsilon\zeta^{N}, \quad \Gamma_{3}(t_{0}+\mathcal{T}_{0}) \le C_{S,2}\epsilon\zeta^{N}.
\end{eqnarray*}
Thus, it is sufficient to show  that for each given $t_* \in [t_{0}+\mathcal{T}_0, t_{0}+2\mathcal{T}_0]$, 
\begin{equation}\label{bootstrapGoal}
	\Gamma_{1}(t) \le C_{S,1}\epsilon\zeta^{N+1}, \quad \Gamma_{2}(t) \le C_{S,2}\epsilon\zeta^{N+1}, \quad \Gamma_{3}(t) \le C_{S,2}\epsilon\zeta^{N+1}, \quad \forall t\in [t_{0}+\mathcal{T}_{0}, t_{*}],
\end{equation}
under the assumptions
\begin{equation}\label{bootstrapAssum}
	\Gamma_{1}(t) \le \alpha C_{S,1}\epsilon\zeta^{N}, \quad \Gamma_{2}(t) \le  \alpha C_{S,2}\epsilon\zeta^{N}, \quad \Gamma_{3}(t) \le \alpha C_{S,2}\epsilon\zeta^{N}, \quad \forall t\in [t_{0}, t_{*}],
\end{equation}
for some constant $\alpha>1$ to be determined later.
\par Now, we use the equations \eqref{system3} for $\breve{\Upsilon}_{3}$ and $\breve{\Upsilon}_{3}^{(\mathcal{T})}$ respectively to get
\begin{align}
	&\left(\partial_{t}+\lambda_{3}\partial_{x}\right)\left(\breve{\Upsilon}_{3}- \breve{\Upsilon}_{3}^{(\mathcal{T})}\right) \notag \\
	=& - \left(\lambda_{3}-\lambda_{3}^{(\mathcal{T})}\right)\partial_{x}\breve{\Upsilon}_{3} +\alpha_{3}\left(g_{3}(x,\breve{\mathbf{\Upsilon}})-g_{3}(x,\breve{\mathbf{\Upsilon}}^{(\mathcal{T})})\right) \notag \\
	& - \sum_{i=1,2}\frac{\alpha_{3}}{\alpha_{i}}\left(l_{3i}-l_{3i}^{(\mathcal{T})}\right)\left(\partial_{t}+\lambda_{3}\partial_{x}\right)\breve{\Upsilon}_{i} - \sum_{i=1,2}\frac{\alpha_{3}l_{3i}^{(\mathcal{T})}}{\alpha_{i}}\left(\lambda_{3}-\lambda_{3}^{(\mathcal{T})}\right)\partial_{x}\breve{\Upsilon}_{3}^{(\mathcal{T})} \notag \\
	& - \sum_{i=1,2}\left(\partial_{t}+\lambda_{3}\partial_{x}\right)\left(\frac{\alpha_{3}l_{3i}^{(\mathcal{T})}}{\alpha_{i}}\left(\breve{\Upsilon}_{i}- \breve{\Upsilon}_{i}^{(\mathcal{T})}\right)\right) + \sum_{i=1,2}\left(\breve{\Upsilon}_{i}- \breve{\Upsilon}_{i}^{(\mathcal{T})}\right)\left(\partial_{t}+\lambda_{3}\partial_{x}\right)\left(\frac{\alpha_{3}l_{3i}^{(\mathcal{T})}}{\alpha_{i}}\right). \label{d10}
\end{align}
\par Next, one can define the characteristic as
\begin{align}\label{characteritic0}
	\left\{
	\begin{aligned}
		&\frac{dX_{3}}{d\tau}(\tau;t,x)=\lambda_{3}\left(X_{3}(\tau;t,x),\breve{\mathbf{\Upsilon}}\left(\tau,X_{3}(\tau;t,x)\right)\right),\\
		&X_{3}(t;t,x)=x,
	\end{aligned}
	\right.
\end{align}
for any fixed $(t,x) \in [t_{0}+\mathcal{T}_{0},t_{*}]\times[0,L]\cap\Omega_{+}^{\mathcal{T}}\cap\Omega_{+}$. Since the solution is $C^{1}$ smooth, the characteristic is well--defined. At the same way, one can define the characteristic $X_{3}^{(\mathcal{T})}$ for the time--periodic solution. In addition, one can set the backward exit--time for $(t,x)$ as respectively $\tau_{3}(t,x)$/$\tau_{3}^{(\mathcal{T})}(t,x)$ by the intersection of the characteristic $X_{3}$/$X_{3}^{(\mathcal{T})}$ and the shock $\chi$/$\chi^{(\mathcal{T})}$, i.e.,
\begin{equation}\label{exitTime0}
	X_{3}(\tau_{3};t,x)=\chi(\tau_{3}), \quad X_{3}^{(\mathcal{T})}(\tau_{3}^{(\mathcal{T})};t,x)=\chi^{(\mathcal{T})}(\tau_{3}^{(\mathcal{T})}).
\end{equation}
By \eqref{timeMax}, one has $\{\tau_{3}, \tau_{3}^{(\mathcal{T})}\} \subset [t_{0}, t_{*}]$.
\par Furthermore, one can integrate the ODE \eqref{shockposition} from $t_{0}$ to $t$ and use Hadamard's formula as well as \eqref{pFpx}--\eqref{Fxsmall} to get
\begin{multline}\label{smallZeta}
	\left(1+2\bar{C}\mathcal{E}\mathcal{T}_{0}\right)\Gamma_{2}(t) \le \alpha C_{S,2}\epsilon\zeta^{N} 
	+2\mathcal{T}_{0}(1+C\epsilon+C\mathcal{E})\left(\sum_{i=1}^{3}\dfrac{1}{\alpha_{i}}|\frac{\partial \mathcal{F}}{\partial \bar{\Upsilon}_{i}}(\tilde{x},\mathbf{0})|\Gamma_{1}(t)\right. \\
	+\left. \left(|\frac{\partial \mathcal{F}}{\partial \bar{\rho}_{l}}(\tilde{x},\mathbf{0})|
	+|\frac{\partial \mathcal{F}}{\partial \bar{u}_{l}}(\tilde{x},\mathbf{0})|+|\frac{\partial \mathcal{F}}{\partial \bar{p}_{l}}(\tilde{x},\mathbf{0})|\right)\left(C_{l}\epsilon\Gamma_{2}(t) + C_{L}\epsilon\zeta^{N}\right)\right).
\end{multline}
\par Then, for fixed $(t,x)$, one can integrate \eqref{d10} along the characteristic $X_{3}(\tau;t,x)$. Without loss of generality, we assume that $X_{3}(\tau;t,x)$ intersects with $\chi$ (rather than $\chi^{(\mathcal{T})}$) at the point $(\tau_{3}, \chi(\tau_{3}))$. Applying the very similar argument as \eqref{c55}--\eqref{cauchy}, and further noting \eqref{d2}, one can conclude
\begin{equation}
	\Gamma_{1}(t) \le (1+C\epsilon +C\mathcal{E} )\left(\alpha_{3}\theta_{d}+\frac{\alpha_{3}}{\alpha_{2}}\right)\alpha C_{S,1}\epsilon\zeta^{N} +CC_{L}\epsilon\zeta^{N} + \alpha CC_{S,1}\epsilon^{2}\zeta^{N}, \quad \forall t\in [t_{0},t_{*}].
\end{equation}
Thus, one can choose $\zeta$ satisfying
\begin{eqnarray*}
	(1+C\epsilon +C\mathcal{E} )\left(\alpha_{3}\theta_{d}+\frac{\alpha_{3}}{\alpha_{2}}\right)\alpha < \zeta < 1,
\end{eqnarray*}
such that for large enough $C_{S,1}$, 
\begin{equation}\label{Gamma1}
	\Gamma_{1}(t) \le C_{S,1}\epsilon\zeta^{N+1}, \quad \forall t\in [t_{0},t_{*}].
\end{equation}
The analysis for $\breve{\Upsilon}_{2}$ and $\breve{\Upsilon}_{3}$ is similar or simpler. 
\par Moreover, one can choose $\zeta$ and $\alpha$ satisfying
\begin{eqnarray*}
	\alpha\left(1+2\bar{C}\mathcal{E}\mathcal{T}_{0}\right)^{-1} < \zeta <1,
\end{eqnarray*}
and then the estimate for $\Gamma_{2}(t)$ in \eqref{bootstrapGoal} follows from \eqref{smallZeta}. Finally, the estimate for $\Gamma_{3}(t)$ in \eqref{bootstrapGoal} can be derived directly from \eqref{shockposition} and \eqref{Gamma1} by using Hadamard's formula. The proof of \Cref{t2} is now complete.

\section*{Acknowledgments}
Qu and Wang are supported in part by NSFC Grant No. 12431007 and 62588101. Yu and Zhang are supported by NSFC Grant No. 12271310 and Natural Science Foundation of Shandong Province ZR2022MA088. The authors would like to thank Professor Hairong Yuan for his discussion and help.

\small
\begin{spacing}{1}
	\bibliographystyle{abbrv}
	\bibliography{myShockBibs}
\end{spacing}
\end{document}